\documentclass[11pt,a4paper, reqno]{amsart}
\usepackage{amsmath,amssymb}
\usepackage{bm}
\usepackage[dvipdfmx]{graphicx}
\usepackage{float}
\usepackage[T1]{fontenc}
\usepackage{textcomp}
\usepackage{type1cm}
\usepackage{bbm} 
\usepackage{pifont} 
\usepackage{amsthm} 
\usepackage{stmaryrd} 
\usepackage{cite} 
\usepackage{ascmac}
\usepackage[all]{xy}
\usepackage{tikz}
 \usetikzlibrary{arrows} 
\usepackage{mathrsfs} 
\usepackage{url}
\usepackage{alltt} 
\usepackage{braket} 
\usepackage{cases} 
\usepackage{multirow}

\usepackage[
  top=23mm,
  bottom=23mm,
  left=24mm,
  right=24mm
]{geometry}

\renewcommand{\labelenumi}{(\arabic{enumi})}

\usepackage{mathtools}
\mathtoolsset{showonlyrefs,showmanualtags}  

\makeatletter
\@addtoreset{equation}{section}

\makeatother

\usepackage[OT2,T1]{fontenc}
\DeclareSymbolFont{cyrletters}{OT2}{wncyr}{m}{n}
\DeclareMathSymbol{\Sha}{\mathalpha}{cyrletters}{"58} 

\theoremstyle{plain}
\newtheorem{thm}[equation]{Theorem}
\newtheorem{cor}[equation]{Corollary}

\newtheorem{prop}[equation]{Proposition}
\newtheorem{lem}[equation]{Lemma}

\theoremstyle{definition}
\newtheorem{dfn}[equation]{Definition}
\newtheorem{ex}[equation]{Example}
\newtheorem{rem}[equation]{Remark}

\newcommand{\rarrow}{\longrightarrow}

\usetikzlibrary{cd, decorations.pathmorphing}

\renewcommand{\hat}[1]{\widehat{#1}}

\newcommand{\fr}[1]{\mathfrak{#1}}

\newcommand{\cal}[1]{\mathcal{#1}}

\newcommand{\mapres}[2]{\left. #1 \vphantom{\big|} \right|_{#2}}

\DeclareMathOperator{\Hom}{Hom}

\let\Im\relax
\DeclareMathOperator{\Im}{Im}
\DeclareMathOperator{\Ker}{Ker}
\DeclareMathOperator{\Coker}{Coker}
\let\det\relax
\DeclareMathOperator{\det}{det}
\DeclareMathOperator{\Gal}{Gal}
\DeclareMathOperator{\Tr}{Tr}

\newcommand{\A}{\mathbb{A}}

\newcommand{\Z}{\mathbb{Z}}
\newcommand{\Q}{\mathbb{Q}}
\newcommand{\R}{\mathbb{R}}

\DeclareMathOperator{\ord}{ord}

\DeclareMathOperator{\norm}{Nr}

\newcommand{\Frob}{\mathrm{Frob}}

\DeclareMathOperator{\Ext}{Ext}
\DeclareMathOperator{\Extrig}{Extrig}
\DeclareMathOperator{\MExt}{MExt}
\DeclareMathOperator{\loc}{loc}

\DeclareMathOperator{\dR}{dR}
\DeclareMathOperator{\st}{st}
\DeclareMathOperator{\cris}{cris}
\DeclareMathOperator{\ur}{ur}
\newcommand{\BK}{\mathrm{BK}}
\newcommand{\Fil}{\mathrm{Fil}}
\newcommand{\dcris}{D_{\mathrm{cris}}}
\newcommand{\ddr}{D_{\mathrm{dR}}}
\newcommand{\dst}{D_{\mathrm{st}}}
\newcommand{\bklog}{\log^{\mathrm{BK}}}

\newcommand{\MF}{\mathbf{MF}}
\newcommand{\rep}{\mathop{\bm{\mathrm{Rep}}}}

\title[A difference formula of $p$-adic height pairings]{A difference formula of $p$-adic height pairings\\ via the Bloch--Kato logarithm map}
\author{Taiga Adachi}
\address{Joint Graduate School of Mathematics for Innovation, Kyushu University, Motooka 744, Nishi-ku Fukuoka 819-0395, Japan.}
\email{t.adachi1729@gmail.com}
\author{Yu Katagiri}
\address{Institute of Mathematics for Industry, Kyushu University, Motooka 744, Nishi-ku Fukuoka 819-0395, Japan.}
\email{yu.katagiri.s3@gmail.com}
\author{Ryota Shii}
\address{Graduate School for Mathematics, Kyushu University, Motooka 744, Nishi-ku Fukuoka 819-0395, Japan.}
\email{shii.ryota@gmail.com}

\keywords{$p$-adic height pairings, the Bloch--Kato logarithm map, geometric $p$-adic representations, Galois representations attached to higher-weight cuspforms}
\subjclass[2020]{Primary 11G50; Secondary 11F80, 11F85}

\begin{document}
\begin{abstract}
The construction of a $p$-adic height pairing for a geometric $p$-adic representation of the absolute Galois group of a number field depends on a global $p$-adic logarithm and on local splittings of the Hodge filtrations at the primes above $p$. 
We study the dependence on these splittings for suitable two-dimensional symplectic self-dual representations, including self-dual twists of representations attached to even-weight newforms at non-ordinary primes not dividing the level. 
We express the difference between the height pairings associated with the two splittings determined by Frobenius explicitly in terms of local Bloch--Kato logarithms. 
As an application over $\Q$, we prove that at least one of the two cyclotomic $p$-adic height pairings is non-trivial under the additional assumptions that the Frobenius eigenvalues at $p$ are distinct and the localization map at $p$ from the Bloch--Kato Selmer group is non-zero.
\end{abstract}

\maketitle

\section{Introduction}
\noindent
For a fixed rational prime $p$, a $p$-adic height pairing is a $p$-adic analogue of the real-valued N\'eron--Tate pairing. 
Its construction, however, depends on a global $p$-adic logarithm and splittings of the Hodge filtration on $p$-adic de Rham cohomology. 
The N\'eron--Tate pairing is non-degenerate, whereas a $p$-adic height pairing may be degenerate for some choices of logarithm. 
For the cyclotomic logarithm, non-degeneracy is conjectured but remains unproved in general; the problem is comparable in difficulty to the Leopoldt conjecture. 
Even non-triviality, meaning that the pairing is not identically zero, remains open in general, including for elliptic curves over $\Q$ with good ordinary reduction at $p$. 
Establishing non-triviality is thus an important arithmetic problem.

The conjectural relation between $p$-adic heights and $p$-adic $L$-functions helps explain this dependence on auxiliary choices. 
One expects a cyclotomic $p$-adic analogue of the Beilinson--Bloch--Kato conjecture for the $p$-adic realizations of motives, and more generally for suitable geometric $p$-adic representations. 
Although the corresponding $p$-adic $L$-functions are themselves conjectural in general, the Mazur--Tate--Teitelbaum $p$-adic BSD conjecture \cite{MTT86} for elliptic curves over $\Q$ provides a representative example, just as the classical BSD conjecture is a special case of the Beilinson--Bloch--Kato conjecture. 
It relates the order of vanishing to the Mordell--Weil rank and predicts a leading-term formula involving a $p$-adic regulator, the Tate--Shafarevich group, and other arithmetic invariants. 
The construction of such regulators from $p$-adic height pairings has been one of the principal motivations for the development of the theory.

The $p$-adic $L$-functions in these conjectures are attached to the cyclotomic $\Z_p$-extension, but Iwasawa theory also studies $p$-adic $L$-functions along other extensions, such as anticyclotomic extensions. 
The corresponding global choice in a height pairing is reflected in its logarithm. 
Even with the cyclotomic extension fixed, there remains a local choice in the construction of $p$-adic $L$-functions. 
For an elliptic curve over $\Q$ with good reduction at $p$, this choice involves the two roots $\alpha$ and $\beta$ of the Frobenius polynomial at $p$. 
In the ordinary case, the unit root gives a distinguished choice. 
In the supersingular case, however, neither root is canonically preferred, and both resulting $p$-adic $L$-functions play an important role, as illustrated by Pollack's plus/minus theory \cite{Pol03}.

The relation with $p$-adic $L$-functions thus suggests considering two height pairings corresponding to $\alpha$ and $\beta$, with the global logarithm fixed. 
For elliptic curves, a formula expressing their difference in terms of the formal logarithm is known \cite{BKO24, Kob13}. 
This formula has applications to the non-triviality of $p$-adic height pairings and to Perrin-Riou's conjecture on Beilinson--Kato elements. 
It is therefore natural to ask whether an analogous formula holds beyond the case of elliptic curves.

The general theory of $p$-adic heights developed by Zarhin \cite{Zar90}, Nekov\'a\v{r} \cite{Nek93}, and others provides a framework for this question. 
It extends height pairings from abelian varieties to Selmer groups of suitable geometric $p$-adic representations of the absolute Galois group of a number field. 
The construction again depends on a global $p$-adic logarithm and a splitting of the Hodge filtration at each prime above $p$. 
In this framework, the choices associated with $\alpha$ and $\beta$ for elliptic curves correspond to Hodge splittings determined by the Frobenius eigenlines. 
Under suitable hypotheses, two splittings arise in the same way for more general two-dimensional representations. 
However, the corresponding difference formula has not previously been established for representations attached to higher-weight modular forms.

In this paper, we fix a global $p$-adic logarithm and compute the difference between the height pairings attached to these splittings for suitable two-dimensional symplectic self-dual representations of the absolute Galois group of a number field. 
Our formula expresses the difference as a sum of local terms involving products of Bloch--Kato logarithms. 
This generalizes the elliptic-curve formula and applies, in particular, to self-dual twists of representations attached to even-weight newforms at non-ordinary primes. 
As an application over $\Q$, we prove that at least one of the two height pairings defined using the cyclotomic logarithm is non-trivial under the hypotheses stated below, including the assumption that the localization map at $p$ from the Bloch--Kato Selmer group is non-zero.

We describe our main result in detail.
Let $F$ be a number field, $G_{F}$ the absolute Galois group of $F$, and $E$ a finite extension of $\Q_{p}$.
Let $V$ denote a finite-dimensional $E$-vector space with a continuous $E$-linear action of $G_{F}$.
Although the results of this paper are established for a general number field $F$, we assume $F = \Q$ throughout this introduction for simplicity.
For a rational prime $\ell$, let $V_{\ell}$ denote the restriction of $V$ to $G_{\Q_{\ell}}$. 
Let $B_{\dR}$, $B_{\st}$ and $B_{\cris}$ be Fontaine's period rings.
For a $p$-adic representation $W$ of $G_{\Q_{p}}$ and $? \in \{ \dR, \st, \cris \}$, put $D_{?}(W) \coloneq (B_{?} \otimes_{\Q_{p}} W)^{G_{\Q_{p}}}$.
For any rational prime $\ell$, we define 
\begin{align}
H^{1}_{f}(\Q_{\ell}, V) &\coloneq \begin{cases}
  \Ker \left( H^{1}(\Q_{\ell}, V) \to H^{1}(I_{\ell}, V) \right) & (\ell \neq p), \\
  \Ker \left( H^{1}(\Q_{p}, V) \to H^{1}(\Q_{p}, V \otimes_{\Q_{p}} B_{\cris}) \right) & (\ell = p),
\end{cases}
\end{align}
and define the Bloch--Kato Selmer group for $V$ as 
\begin{align}
H^{1}_{f}(\Q, V) &\coloneq \Ker\left( H^{1}(\Q, V) \to \prod_{\ell} \frac{H^{1}(\Q_{\ell}, V)}{H^{1}_{f}(\Q_{\ell}, V)} \right),
\end{align}
where $I_{\ell}$ is the inertia group at $\ell$.
We suppose the following:
{
  \setlength{\leftmargini}{48pt}
  \begin{enumerate}
    \renewcommand{\labelenumi}{\textbf{(Hyp \arabic{enumi})}}
    \item The representation $V$ is unramified outside finitely many primes.
    \item The representation $V_{p}$ is semistable at $p$.
    \item For any rational prime $\ell$, we have 
    \begin{align}
        \begin{cases}
            H^{i}(\Q_{\ell},V_{\ell})=H^{i}(\Q_{\ell},V^{*}(1)_{\ell})=0 & (\ell \neq p,~ i=0,1,2),\\
            D_{\cris}(V_p)^{\varphi=1}=D_{\cris}(V^{*}(1)_{p})^{\varphi=1}=0 & (\ell = p).
        \end{cases}
    \end{align}
    Here, $V^{*} \coloneq \Hom_{E}(V, E)$, $W(1) \coloneq W \otimes_{E} E(1)$ is the Tate twist of a $p$-adic representation $W$, and $\varphi$ is the Frobenius on $D_{\mathrm{cris}}(V_{p})$.  
  \end{enumerate}
}
Let $l_{\Q}^{c}:\A_{\Q}^{\times}/\Q^{\times} \to \Q_{p}$ be the cyclotomic logarithm defined as 
\begin{align}
  l_{\Q, \ell}^{c}(x) \coloneq \begin{cases}
    \mathrm{ord}_{\ell}(x)\log_{p}\ell & (\ell \nmid p\infty), \\
    -\log_{p}x & (\ell = p), \\
    0 & (\ell = \infty),
  \end{cases}
\end{align}
where $\mathrm{ord}_{\ell}$ is the normalized valuation at $\ell$, and $\log_{p}$ is the $p$-adic logarithm on $\mathbb{Q}_{p}^{\times}$ satisfying $\log_{p}p=0$.
If we take an $E$-linear splitting $N:D_{\mathrm{dR}}(V_{p})/D_{\mathrm{dR}}^{+}(V_{p}) \to D_{\mathrm{dR}}(V_{p})$ of the Hodge filtration 
\begin{align}
  \xymatrix{
    0 \ar[r] & D_{\mathrm{dR}}^{+}(V_{p}) \ar[r] & D_{\mathrm{dR}}(V_{p}) \ar[r] & D_{\mathrm{dR}}(V_{p})/D_{\mathrm{dR}}^{+}(V_{p}) \ar[r] & 0,
  }
\end{align}
where $D_{\dR}^{+}(V_{p}) \coloneq (B_{\dR}^{+} \otimes_{\Q_{p}} V_{p})^{G_{\Q_{p}}}$, we can define a $p$-adic height pairing
\begin{align}
  \langle \phantom{a}, \phantom{a} \rangle_{l_{\Q}^{c}, N} = \langle \phantom{a}, \phantom{a} \rangle_{N}:H^{1}_{f}(\Q, V^{*}(1)) \times H^{1}_{f}(\Q, V) \to E
\end{align}
following Nekov\'a\v{r} \cite{Nek93}.
We also assume that $V$ is a two-dimensional symplectic self-dual representation; namely, that there exists a non-degenerate, skew-symmetric, and $G_{\Q}$-equivariant pairing
\begin{align}
  [\phantom{a}, \phantom{a}]:V \times V \to E(1).
\end{align}
Since $V_p$ is a two-dimensional Hodge--Tate representation and, by self-duality, its two Hodge–Tate weights sum to $1$, it follows that $\dim_ED_{\dR}^{+}(V_{p})=1$.
Now, we fix an $E$-basis $\omega$ of $D_{\dR}^{+}(V_{p})$.
Furthermore, suppose that $V$ satisfies
{
  \setlength{\leftmargini}{48pt}
  \begin{enumerate}
    \setcounter{enumi}{3}
    \renewcommand{\labelenumi}{\textbf{(Hyp \arabic{enumi})}}
    \item $\{ \varphi \omega, \omega \}$ forms an $E$-basis of $D_{\dR}(V_{p})$.
\end{enumerate}
}
Note that if $A$ is an elliptic curve defined over $\Q$ with good reduction at $p$ and we put $V=V_{p}A$, this assumption \textbf{(Hyp 4)} is equivalent to the statement that $A$ has no $p$-ordinary CM by a result of Serre--Tate (cf. \cite{Ser89}).
Let $\alpha, \beta \in \overline{E}$ be the eigenvalues of $\varphi$ on $D_{\st}(V_{p})$.
Let $L$ denote a finite extension of $E$ containing $\alpha$ and $\beta$.
For $\gamma \in \{ \alpha, \beta \}$, the $\gamma$-eigenspace of $\varphi$ can be seen as a splitting $N_{\gamma}:D_{\dR}(V_{p})_{L}/D_{\dR}^{+}(V_{p})_{L} \to D_{\dR}(V_{p})_{L}$ from the above hypotheses. 
Here, $X_L:=X\otimes_{E}L$ for an $E$-vector space $X$.
After extending scalars to $L$, we regard these height pairings as $L$-valued.
The following is our main theorem.

\begin{thm}[Theorem \ref{thm:MainTheorem}] \label{main_theorem1}
  We have
  \begin{align}
    \langle a, a \rangle_{N_{\alpha}} - \langle a, a \rangle_{N_{\beta}} 
    = \frac{\beta-\alpha}{[\varphi\omega, \omega]_{\dR}}\bklog_{V_{p}}(a_{p})(\omega)^2,
  \end{align}
  for $a \in H^{1}_{f}(\Q, V)$.
  Here, 
  \begin{itemize}
      \item $[\phantom{a}, \phantom{a}]_{\mathrm{dR}}: D_{\mathrm{dR}}(V_{p}) \times D_{\mathrm{dR}}(V_{p}) \to D_{\dR}(E(1)) = E$ is the de Rham pairing induced by the pairing $[\phantom{a}, \phantom{a}]$, 
      \item $\log_{V_{p}}^{\mathrm{BK}}:H^{1}_{f}(\Q_{p}, V_{p}) \to \Hom(D_{\mathrm{dR}}^{+}(V_{p})_{L}, D_{\dR}(E(1))_{L})$ is the Bloch--Kato logarithm map for $V_{p}$.
      \item $a_{p} \in H^{1}_{f}(\Q_{p}, V)$ is the image of $a$ by the localization map $H^{1}_{f}(\Q, V) \to H^{1}_{f}(\Q_{p}, V)$.
  \end{itemize}
\end{thm}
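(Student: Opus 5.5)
Since the pairing is built from a global logarithm (fixed here) and local Hodge splittings, I would first isolate the dependence on the splitting at $p$. In Nekov\'a\v{r}'s construction, $\langle b, a\rangle_N$ is a sum of local terms. For $a \in H^1_f(\Q,V)$ and $b\in H^1_f(\Q,V^*(1))$, one takes the mixed extension realizing $(b,a)$. At $\ell\neq p$, (Hyp 3) makes the local splitting unique and independent of $N$. At $p$, the splitting uses $N$ to lift the de Rham class of the extension. Hence
\[
\langle b,a\rangle_{N_\alpha}-\langle b,a\rangle_{N_\beta}
\]
reduces to a purely local expression at $p$ depending only on $a_p$, $b_p$ and $N_\alpha-N_\beta$. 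The standard shape of this local term is
\[
\bigl[\,(N_\alpha-N_\beta)(\bar x_b),\, x_a\bigr]_{\dR}\cdot(\text{normalization from } l^c_{\Q,p}\text{ on } \mathbb{Z}_p^\times\text{-units}),
\]
where $x_a\in D_{\dR}(V_p)/D^+_{\dR}(V_p)$ is the Bloch--Kato logarithm of $a_p$ (via $\exp_{BK}^{-1}$) and similarly for $b$. I would state this as a general lemma, valid for any two splittings $N,N'$:
\[
\langle b,a\rangle_N-\langle b,a\rangle_{N'}=\pm\bigl[(N-N')\log(b_p),\log(a_p)\bigr]_{\dR}.
\]
It is obtained by comparing the two sections used to define the local height of the mixed extension. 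The sign and any trace factor come from the convention $l^c_{\Q,p}=-\log_p$.

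Next I would use self-duality $V\cong V^*(1)$ to put $b=a$, and make everything explicit in the basis $\{\varphi\omega,\omega\}$ from (Hyp 4). Let $e_\gamma$ ($\gamma\in\{\alpha,\beta\}$) be a $\varphi$-eigenvector in $D_{\dR}(V_p)_L$. Writing $e_\gamma=\varphi\omega-\delta\omega$ with $\delta$ the other eigenvalue, one checks that $\varphi^2-(\alpha+\beta)\varphi+\alpha\beta=0$ gives $\varphi e_\gamma=\gamma e_\gamma$. Because $e_\gamma\notin D^+_{\dR}=L\omega$, the line $Le_\gamma$ defines $N_\gamma$, namely $N_\gamma(\bar x)=$ the unique lift in $Le_\gamma$.

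Then $N_\alpha-N_\beta$ takes values in $D^+_{\dR}=L\omega$. Concretely, for $\bar x=c\,\overline{\varphi\omega}$ we have $N_\alpha(\bar x)-N_\beta(\bar x)=c(\alpha-\beta)\omega$ up to sign, depending on which of $e_\alpha,e_\beta$ carries which shift. Since $\omega$ is isotropic for the skew form, the coefficient $c$ of $\log(a_p)$ is read off by pairing with $\omega$:
\[
c=\frac{[\log(a_p),\omega]_{\dR}}{[\varphi\omega,\omega]_{\dR}}.
\]
Viewing $\bklog_{V_p}(a_p)$ as the functional $\omega\mapsto[\log(a_p),\omega]_{\dR}$ on $D^+_{\dR}$, the difference becomes
\[
(\alpha-\beta)\,c\,[\omega,\log(a_p)]_{\dR}=\frac{\beta-\alpha}{[\varphi\omega,\omega]_{\dR}}\,\bklog_{V_p}(a_p)(\omega)^2,
\]
using skew-symmetry. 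A check of consistency would be the elliptic curve case in \cite{BKO24, Kob13}.

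The main obstacle is the first step: extracting the precise local formula for the $N$-dependence from Nekov\'a\v{r}'s definition, with correct signs, including the semistable rather than crystalline setting and the identification of $H^1_f$ with $D_{\dR}/D^+_{\dR}$. I would handle it by working directly with the $3$-dimensional mixed extension $X$ of $E(1)$ by $V$ by $E$. The splitting $N$ produces a lift of $D^+_{\dR}$-data, and the local height at $p$ is $l^c_{\Q,p}$ applied to the resulting class in $H^1(\Q_p,E(1))$. Changing $N$ changes this class by $\exp$ of $[(N-N')\cdot,\cdot]_{\dR}$, and $l^c_{\Q,p}$ composed with the Kummer map and $\exp$ equals minus the identity on $D_{\dR}(E(1))=E$.
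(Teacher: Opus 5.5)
Your route is the paper's. You fix one mixed extension so that the terms at $\ell\neq p$ cancel. You show that the local term at $p$ depends on the splitting only through $N-N'$ paired against the two Bloch--Kato logarithms. Then you compute in the basis $\{\varphi\omega,\omega\}$. That second half is correct and matches the paper's computation:
\begin{itemize}
\item your eigenvectors $\varphi\omega-\delta\omega$ span the paper's $N_{\alpha}$ and $N_{\beta}$;
\item $(N_\alpha-N_\beta)(\overline{\varphi\omega})=(\alpha-\beta)\omega$;
\item reading off the coefficient by pairing with the isotropic vector $\omega$ is the paper's use of $\eta=[\varphi\omega,\omega]_{\dR}^{-1}\varphi\omega$.
\end{itemize}

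The gap is the first step. It carries essentially all the content, and you only assert it. Your sketch (``changing $N$ changes this class by $\exp$ of $[(N-N')\cdot,\cdot]_{\dR}$'') never explains why the other argument is $\log(b_p)$, i.e.\ how the local class of $b$ enters the de Rham data of the mixed extension. The paper obtains this in four steps:
\begin{itemize}
\item Since $w_v=1-u_v\circ\mu_v$, the difference of the local terms at $p$ is $(l_{F,v}\circ\exp^{\BK}_{E(1)})\circ(d\otimes1)\circ(u_v-u_v')\circ\mu_v\circ[\cal{E}_v]$.
\item One has $\mu_v\circ[\cal{E}_v](a)=a_v$, independently of the mixed extension.
\item One has $(d\otimes1)\circ(u_v-u_v')=r_v-r_v'$ for the extension $W$ attached to $d$.
\item Crucially, the semistable analogue of Nekov\'a\v{r}'s Lemma 2.7 (the isomorphism $\Phi$ on rigidified extensions) shows that the Hodge-compatible lift $x_v$ minus the $(\varphi,N)$-canonical lift $x_v^{\varphi}$ represents $\bklog(d_v)$ modulo $D_{\dR}^{+}$. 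Together with $\mathscr{D}(r_v)=\mathscr{D}(N_v)\circ\sigma_v$ and $\sigma_v(x_v^{\varphi})=0$, this shows the $N$-dependent part sees only $\bklog(d_v)$.
\end{itemize}
The outcome is $\langle d,a\rangle_{N}-\langle d,a\rangle_{N'}=\sum_{v\mid p}(l_{F,v}\circ\exp^{\BK}_{E(1)})\bigl(\langle \log d_v,(N_v-N_v')\log a_v\rangle_{\dR}\bigr)$, with the natural pairing $D_{\dR}(V^*(1)_v)\times D_{\dR}(V_v)\to D_{\dR}(E(1))$. Without the rigidified-extension input, your bilinear formula is not justified.

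Relatedly, the sign in the statement is fitted, not derived. Your general lemma applies $N-N'$ to $\log(b_p)\in D_{\dR}(V^*(1)_p)/D_{\dR}^{+}$, where it is only defined after transporting $N$ (via $\mathscr{D}(N)$ or via $j$). After transport by $j$, the operator $T=(N_\alpha-N_\beta)\circ\mathrm{pr}$ satisfies $[Tx,y]_{\dR}=-[x,Ty]_{\dR}$, so the two placements give opposite signs. Your ``$\pm$'' is therefore a genuine ambiguity. Choosing the sign that yields $\beta-\alpha$ rather than $\alpha-\beta$ does not prove it. The signs to track are:
\begin{itemize}
\item the minus in the definition of $h_{v,\cal{E},N_v}$;
\item the relation $w_v=1-u_v\mu_v$;
\item the sign from transporting $\bklog_{V^*(1)_p}$ to $\bklog_{V_p}$ through $j$ (skew-symmetry);
\item $l^c_{\Q,p}\circ\exp^{\BK}_{E(1)}=-\mathrm{id}$.
\end{itemize}
Tracking these gives exactly $[(N_\alpha-N_\beta)\log a_p,\log a_p]_{\dR}$. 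So your choice turns out to be correct, but your argument does not establish it.
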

\begin{rem}
  \begin{enumerate}
    \item Since $D_{\mathrm{dR}}^{+}(V_{p})$ is orthogonal to itself under the de Rham pairing $[\phantom{a}, \phantom{a}]_{\dR}$, the hypothesis \textbf{(Hyp 4)} implies $[\varphi\omega, \omega]_{\dR} \neq 0$.
    \item By the hypotheses \textbf{(Hyp 2)} and \textbf{(Hyp 3)}, the Bloch--Kato logarithm map $\log_{V_{p}}^{\mathrm{BK}}$ is well-defined.
    See Section 2.1 for details.
    \item In Theorem \ref{thm:MainTheorem}, the global logarithm may be any non-trivial continuous homomorphism $l_{F}:\A_{F}^{\times}/F^{\times} \to \Q_{p}$.
    \item Let $A$ be an elliptic curve defined over $\Q$ with good supersingular reduction at $p$. Bernardi and Perrin-Riou \cite{Ber-Per93} constructed a $D_p(V_pA)$-valued $p$-adic height function on $A(\Q)$, and Kobayashi \cite[Theorem 4.8]{Kob13} identified its components with the diagonal restrictions of the Zarhin--Nekov\'{a}\v{r}-type height pairings. Taking the Frobenius eigenlines as Hodge splittings, one sees that Theorem \ref{main_theorem1} recovers the known difference formula for elliptic curves \cite{BKO24, Kob13}.
  \end{enumerate}
\end{rem}
As mentioned above, we also construct a framework for proving the non-triviality of $p$-adic height pairings for $p$-adic Galois representations. 

\begin{cor}[Corollary \ref{cor:non-triviality}] \label{cor:main_theorem}
  We assume that
  \begin{enumerate}
  \renewcommand{\labelenumi}{(\roman{enumi})}
    \item $\alpha \neq \beta$,
    \item the localization map $H^{1}_{f}(\Q, V) \to H^{1}_{f}(\Q_{p}, V_{p})$ at $p$ is a non-zero map.
  \end{enumerate}
  Then, either $\langle \phantom{a}, \phantom{a} \rangle_{N_{\alpha}}$ or $\langle \phantom{a}, \phantom{a} \rangle_{N_{\beta}}$ is non-trivial.
  In particular, if the characteristic polynomial of $\varphi$ on $D_{\st}(V_{p})$ is irreducible over $E$, both $\langle \phantom{a}, \phantom{a} \rangle_{N_{\alpha}}$ and $\langle \phantom{a}, \phantom{a} \rangle_{N_{\beta}}$ are non-trivial.
\end{cor}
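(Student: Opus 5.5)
We deduce the corollary directly from Theorem \ref{main_theorem1}, arguing by contradiction. Suppose both $\langle\phantom{a},\phantom{a}\rangle_{N_{\alpha}}$ and $\langle\phantom{a},\phantom{a}\rangle_{N_{\beta}}$ vanish identically. Since $V$ is symplectic self-dual, the pairing $[\phantom{a},\phantom{a}]$ identifies $V\cong V^{*}(1)$, so $H^{1}_{f}(\Q,V^{*}(1))\cong H^{1}_{f}(\Q,V)$ and the diagonal values $\langle a,a\rangle_{N_\gamma}$ make sense. Then the left-hand side of the formula in Theorem \ref{main_theorem1} is zero for every $a\in H^{1}_{f}(\Q,V)$. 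By (i), $\beta-\alpha\neq 0$, and $[\varphi\omega,\omega]_{\dR}\neq0$ by the Remark following the theorem. Hence $\bklog_{V_p}(a_p)(\omega)^2=0$, that is, $\bklog_{V_p}(a_p)(\omega)=0$ for all $a$.

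Next we show this forces $a_p=0$. Under (Hyp 2) and (Hyp 3), $D_{\cris}(V_p)^{\varphi=1}=0$, so the Bloch--Kato exponential $D_{\dR}(V_p)/D^{+}_{\dR}(V_p)\to H^1_f(\Q_p,V_p)$ is an isomorphism; this is the input used in Section 2.1 to define $\bklog_{V_p}$ as its inverse. Hence $\bklog_{V_p}$ is injective on $H^1_f(\Q_p,V_p)$. Its target is $\Hom(D^{+}_{\dR}(V_p)_L,D_{\dR}(E(1))_L)$, obtained through the de Rham pairing, which identifies $D_{\dR}/D^{+}_{\dR}$ with the dual of $D^{+}_{\dR}$. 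Since $D^{+}_{\dR}(V_p)$ is one-dimensional with basis $\omega$, a homomorphism is determined by its value at $\omega$. Therefore $\bklog_{V_p}(a_p)=0$, so $a_p=0$ for every $a$. This contradicts (ii). The step needing the most care is matching the paper's normalization of $\bklog$, so that evaluation at $\omega$ is genuinely injective. This follows from the one-dimensionality and the perfectness of $[\phantom{a},\phantom{a}]_{\dR}$.

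For the second assertion, assume the characteristic polynomial of $\varphi$ is irreducible over $E$. Then $\alpha\neq\beta$, because an irreducible polynomial over a characteristic-zero field is separable, so (i) holds. Let $\sigma\in\Gal(L/E)$ swap $\alpha$ and $\beta$. The spaces $D_{\dR}(V_p)$ and $D^{+}_{\dR}(V_p)$, the element $\omega$ and the operator $\varphi$ are all defined over $E$. Since $\sigma$ carries the $\alpha$-eigenline to the $\beta$-eigenline, it gives $N_\beta=\sigma\circ N_\alpha\circ\sigma^{-1}$ after extending scalars. Nekovář's construction is $L$-linear in the splitting data and $E$-rational otherwise, so for $a,b$ defined over $E$ we get $\langle a,b\rangle_{N_\beta}=\sigma(\langle a,b\rangle_{N_\alpha})$. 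Because the Selmer groups are $E$-spaces, and we extend $L$-linearly, one pairing vanishes identically if and only if the other does. Combined with the first part, both pairings are non-trivial. Verifying this Galois-equivariance of the height construction under change of splitting is the only point requiring a check against the definition in Section 2. If it proved awkward, we could apply $\sigma$ directly to the formula of Theorem \ref{main_theorem1} instead.
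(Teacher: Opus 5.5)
Your argument is correct and is essentially the paper's. The paper exhibits an $a$ with $(\beta-\alpha)Q(\lambda(a))\neq 0$ for $Q(x)=\tau([\phi\omega_v,\omega_v]_{\dR}^{-1}x^2)$; when $F=\Q$ this form is one-dimensional and hence anisotropic, which is exactly your ``$x^2=0\Rightarrow x=0$'' step combined with the injectivity of $\bklog_{V_p}$ and of evaluation at $\omega$, and your second part is the paper's Galois conjugation of $N_\alpha$, $N_\beta$ and hence of the pairings.

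The only thing to drop is your fallback of applying $\sigma$ to the difference formula alone. That only shows the difference is $\sigma$-anti-invariant and cannot exclude one of the pairings vanishing identically, so the conjugation of the pairings themselves is genuinely needed.
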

\begin{rem}
  \begin{enumerate}
    \item If an elliptic curve $A$ has positive Mordell--Weil rank and we put $V = V_p A$, the hypothesis (ii) in Corollary \ref{cor:main_theorem} holds by $A(\Q) \subset A(\Q_{p})$.
    By the work by Burungale--Skinner--Wan \cite{BSW26+}, the hypothesis (ii) in Corollary \ref{cor:main_theorem} also holds when $V$ is a $p$-adic representation associated with a weight 2 newform $f$ satisfying $\ord_{s=1}L(f, s) = 1$. 
    \item For weight-two $p$-ordinary newforms of analytic rank one, B\"{u}y\"{u}kboduk--Pollack--Sasaki \cite{BPS21+} proved that at least one of the two $p$-adic height pairings is non-trivial. 
    Corollary \ref{cor:main_theorem} gives an alternative proof of this result without using $p$-adic Gross--Zagier formulae, whereas the proof in \cite{BPS21+}, following the strategy of B\"{u}y\"{u}kboduk \cite{Buy20}, relies on  those established in \cite{Kob, BN26+}.
    \item Unlike Theorem \ref{main_theorem1}, it is essential to take the cyclotomic logarithm map in Corollary \ref{cor:main_theorem}.
    For details, see the proof of Corollary \ref{cor:non-triviality}.
  \end{enumerate}
\end{rem}
As discussed above, for an arithmetic application of Theorem \ref{main_theorem1} and Corollary \ref{cor:main_theorem}, we will consider the Galois representation associated to an eigen-newform of weight $k \geq 2$ and obtain a generalization of \cite[Corollary A.3]{BKO24}.
Let $f = \sum_{n \geq 1} a_{n}(f)q^{n}$ be a normalized eigen-newform of even weight $k \geq 2$ with level $\Gamma_{0}(N)$.
Fix a place $\mathfrak{p} \mid p$ of the Hecke field $K_{f} \coloneq \mathbb{Q}(a_{n}(f) ~\vline~ n \geq 1)$ of $f$. 
Let $V_{f, \mathfrak{p}}$ be the two-dimensional $p$-adic Galois representation of $G_{\Q}$ over $K_{f, \mathfrak{p}}$ associated to $f$ by Deligne.
Here, $K_{f, \mathfrak{p}}$ is the completion of $K_{f}$ at $\mathfrak{p}$.
Assume that $p \nmid N$, that is, assume that $V_{f, \fr{p}}$ is a crystalline representation.
We put $V_{\fr{p}}\coloneq V_{f, \fr{p}}(k/2)$.
Let $\omega_{f} \in D_{\dR}(V_{\fr{p}})$ be the element corresponding to the algebraic differential form attached to $f$ by the comparison theorem.
Applying Theorem \ref{main_theorem1} and Corollary \ref{cor:main_theorem} to $V_{\fr{p}}$, we obtain the following corollary.

\begin{cor}[Corollary \ref{cor:differ_modular}]\label{cor:arith_application}
  Suppose that $f$ is non-ordinary at $p$, namely, that $a_{p}(f)$ is not a $p$-adic unit in $K_{f, \fr{p}}$.
  Let $\alpha$ and $\beta$ be the roots of the Hecke polynomial $X^{2} -a_{p}(f)p^{-k/2}X + p^{-1}$ at $p$.
  Then, we have 
  \begin{align}
    \langle a, a \rangle_{N_{\alpha}} - \langle a, a \rangle_{N_{\beta}} = \frac{\beta-\alpha}{[\varphi\omega_{f}, \omega_{f}]_{\dR}} \log^{\BK}_{V_{\fr{p}}}(a_p)(\omega_f)^{2}
  \end{align}  
  for $a \in H^{1}_{f}(\Q, V_{\fr{p}})$.
  Furthermore, if the localization map $H^{1}_{f}(\Q, V_{\fr{p}}) \to H^{1}_{f}(\Q_{p}, V_{\fr{p}})$ at $p$ is non-zero and $\alpha \neq \beta$, then either the pairing $\langle \phantom{a}, \phantom{a} \rangle_{N_{\alpha}}$ or the pairing $\langle \phantom{a}, \phantom{a} \rangle_{N_{\beta}}$ attached to $V_{\fr{p}}$ is non-trivial. 
\end{cor}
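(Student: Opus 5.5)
The statement is Corollary \ref{cor:arith_application}. It should follow from Theorem \ref{main_theorem1} and Corollary \ref{cor:main_theorem} applied to $V = V_{\fr{p}}$, $E = K_{f,\fr{p}}$, $\omega = \omega_f$. So the plan is to verify (Hyp 1)--(Hyp 4) for $V_{\fr{p}}$, and to identify the Frobenius eigenvalues on $D_{\cris}(V_{\fr{p}})$ with the roots of $X^2 - a_p(f)p^{-k/2}X + p^{-1}$.

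\textbf{Hypotheses (Hyp 1), (Hyp 2) and symplecticity.}
(Hyp 1) holds because $V_{f,\fr{p}}$ is unramified outside $Np$. (Hyp 2) holds because $p \nmid N$, so $V_{f,\fr{p}}$ is crystalline at $p$ by Faltings/Scholl; the twist $V_{\fr{p}}$ is therefore crystalline, hence semistable, and $D_{\st} = D_{\cris}$.

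The symplectic pairing comes from Poincaré duality. It gives a $G_\Q$-equivariant alternating pairing $V_{f,\fr{p}} \times V_{f,\fr{p}} \to E(1-k)$, which exists since $f$ has trivial character. Twisting by $k/2$ turns this into $V_{\fr{p}} \times V_{\fr{p}} \to E(1)$. Equivalently, the pairing is $\det V_{f,\fr{p}} = E(1-k)$ twisted.

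Crystalline Frobenius on $D_{\cris}(V_{f,\fr{p}})$ has characteristic polynomial $X^2 - a_p X + p^{k-1}$ (Scholl, Saito), up to the paper's normalization of $V$ versus its dual. Twisting by $k/2$ multiplies $\varphi$ by $p^{-k/2}$. This gives $X^2 - a_p p^{-k/2}X + p^{-1}$, whose roots are $\alpha,\beta$.

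\textbf{Hypothesis (Hyp 3).}
At $p$, the eigenvalues satisfy $\alpha\beta = p^{-1}$ and, by Weil, $|\alpha| = |\beta| = p^{-1/2}$ for the complex absolute value. Hence $1$ is not an eigenvalue of $\varphi$. By self-duality $V^*(1) \cong V$, so the same holds for $V^*(1)$.

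At $\ell \neq p$ with $\ell \nmid N$, $V$ is unramified and $\Frob_\ell$ has eigenvalues of complex absolute value $\ell^{-1/2}$. This gives $H^0 = 0$, and local duality gives $H^2 = 0$. The Euler characteristic formula then forces $H^1 = 0$.

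At $\ell \mid N$, I would invoke local–global compatibility (Carayol) together with the Ramanujan bound. The Weil–Deligne representation is pure of weight $-1$ after the twist, so the inertia invariants and coinvariants carry no eigenvalue $1$ (resp.\ $\ell$) of Frobenius. Hence $H^0(\Q_\ell,V) = H^2(\Q_\ell,V) = 0$, and again $H^1 = 0$.

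This step, the bad primes $\ell \mid N$, is the main obstacle. The argument needs purity of the monodromy-weight filtration, which is known for modular forms. I would cite it rather than reprove it.

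\textbf{Hypothesis (Hyp 4) and conclusion.}
Here the Hodge filtration has $D_{\dR}^+(V_{\fr{p}}) = E\omega_f$. If $\varphi\omega_f \in E\omega_f$, then $E\omega_f$ would be a $\varphi$-stable line equal to $\Fil^0$. Weak admissibility would then force the slope of that line to equal its Hodge weight. That makes one root of $X^2 - a_p p^{-k/2}X + p^{-1}$ have valuation $0$ or $-1$, which means $a_p(f)$ is a unit. This contradicts non-ordinarity.

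With all hypotheses checked, Theorem \ref{main_theorem1} gives the displayed difference formula. Under the assumptions that the localization map is non-zero and $\alpha \neq \beta$, Corollary \ref{cor:main_theorem} gives the non-triviality statement.
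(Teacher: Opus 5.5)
Your route is the paper's. You check (Hyp 1)--(Hyp 4) for $V_{\fr{p}}$, where the paper simply cites Scholl and Nekov\'a\v{r} \S 8.3 for (Hyp 1)--(Hyp 3), which your sketch reproduces. You then apply Theorem \ref{main_theorem1} and Corollary \ref{cor:main_theorem}. You also obtain (Hyp 4) as the paper does, from weak admissibility applied to the line $\ddr^+(V_{\fr{p}})=E\omega_f$.

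However, the numerical core of your (Hyp 4) step is wrong for $k\ge 4$. The Hodge--Tate weights of $V_{\fr{p}}=V_{f,\fr{p}}(k/2)$ are $1-k/2$ and $k/2$, so the filtration jumps are at $k/2-1$ and $-k/2$. Hence the line $\Fil^0=E\omega_f$ has Hodge number $t_H=k/2-1$, not $0$ or $-1$. Your values are correct only when $k=2$. With them, a root of valuation $0$ and $\alpha\beta=p^{-1}$ give roots of valuations $0$ and $-1$. Then $\ord_p(a_p(f)p^{-k/2})=-1$, i.e.\ $\ord_p a_p(f)=k/2-1$, which is not a unit for $k\ge 4$. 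So the claimed contradiction with non-ordinarity does not follow as written, precisely in the higher-weight case the corollary is about.

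The repair is the paper's computation. Suppose $\varphi\omega_f=\gamma\omega_f$ with $\gamma\in\{\alpha,\beta\}$. Weak admissibility then gives $\ord_p(\gamma)=t_N(E\omega_f)\ge t_H(E\omega_f)=k/2-1$. Only this inequality is needed; your claim that the slope must equal the Hodge weight would need an extra Newton-above-Hodge argument.

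Now let $\ord_p a_p(f)>0$ and look at the Newton polygon of $X^2-a_p(f)p^{-k/2}X+p^{-1}$:
\begin{itemize}
\item If $\ord_p a_p(f)\ge (k-1)/2$, both roots have valuation $-1/2$.
\item Otherwise the root valuations are $k/2-1-\ord_p a_p(f)$ and $\ord_p a_p(f)-k/2$.
\end{itemize}
In every case the valuations are strictly less than $k/2-1$ (using $k\ge 2$). This contradiction establishes (Hyp 4), and the rest of your proposal goes through unchanged.
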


\subsection*{Organization of the paper}
In Section 2, we recall the Zarhin--Nekov\'a\v{r}-type construction of the $p$-adic height pairing for $p$-adic representations.
In Section 3, we calculate the difference between the $p$-adic height pairings attached to two splittings, and give a proof of Theorem \ref{main_theorem1} and Corollary \ref{cor:main_theorem}.
In Section 4, we obtain Corollary \ref{cor:arith_application} by applying Theorem \ref{main_theorem1} and Corollary \ref{cor:main_theorem} to the representation attached to a normalized Hecke eigen-newform.

\subsection*{Notation}
\begin{itemize}
  \item Throughout this paper, let $p$ be a rational prime.
  \item For a perfect field $k$, let $G_k$ be its absolute Galois group.
  \item For a finite extension $K/\Q_p$, let $\mathcal{O}_K$ be its integer ring.
  \item For a number field $F$ and a place $v$ of $F$, let $F_v$ be the completion of $F$ at $v$. 
  If $v$ is non-archimedean, let $\ord_v:F_v^{\times}\to \Z$ be the valuation normalized by $\ord_v(\pi)=1$ for a fixed uniformizer $\pi$ of $F_v$.
  \item For a perfect field $k$ and a finite extension $E/\Q_{p}$, a $p$-adic Galois representation of $G_k$ over $E$ is a finite-dimensional $E$-vector space $V$ with a continuous $E$-linear $G_{k}$-action.
  Let $\rep_{G_k, E}$ be the category of $p$-adic Galois representations of $G_k$ over $E$. 
  \item For a number field $F$, a finite place $v$ of $F$, and an object $X$ of $\rep_{G_{F}, E}$, let $X_{v}$ be the $p$-adic Galois representation whose $G_{F_{v}}$-action is obtained by the restriction of $G_{F}$ to $G_{F_{v}}$.
  \item For a perfect field $k$ and an object $X$ of $\rep_{G_k, E}$, we put $X^* \coloneq \Hom_{E}(X, E)$.
  \item For a perfect field $k$ and an object $X$ of $\rep_{G_k, E}$ and $i \in \Z$, we denote the $i$-th Tate twist of $X$ by $X(i)$.
  \item For a finite-dimensional $E$-vector space $X$ and a finite extension $L/E$,  we put $X_L \coloneq X \otimes_{E} L$.
  For an $E$-linear map $f:X \to Y$ of finite-dimensional $E$-vector spaces, we let $f_L:X_L\to Y_L$ be the $L$-linear map induced by $f$.
  We also denote the map sending $x \in X$ to $f(x)\otimes 1 \in Y_L$ by $f: X \to Y_L$ if there is no confusion.
  \item For a finite extension $K/\Q_{p}$ and an object $X$ of $\rep_{G_K, E}$, let $\varphi$ be the Frobenius of $\dst(X)$ and $\dcris(X)$, and let $N$ be the monodromy operator of $\dst(X)$. 
  \item For $x\in H^1(G_{F},X)$, we denote the image of $x$ under the localization map at $v$ by $x_v$.
\end{itemize}

\subsection*{Acknowledgements}
The authors would like to express their sincere gratitude to Shinichi Kobayashi for introducing this topic, for his consistent encouragement and fruitful discussions, and for the insights gained through his papers \cite{Kob13} and \cite{Kob14}. 
They are grateful to Kentaro Nakamura for providing many significant comments, reading an earlier version of the manuscript carefully, and pointing out mathematical mistakes.
They would also like to thank K\^{a}z\i m B\"{u}y\"{u}kboduk, Kazuto Ota, Chan-Ho Kim, Naoto Dainobu, and Shoma Sueyoshi for their helpful comments and discussions. 
The first author was supported by WISE program (MEXT) at Kyushu University. 

\section{$p$-adic height pairings for $p$-adic Galois representations}\label{sec:p-adic_height}
\noindent
In this section, we give an overview of the Zarhin--Nekov\'{a}\v{r}-type construction of $p$-adic height pairings. 
Nekov\'{a}\v{r} constructed global $p$-adic height pairings and showed that they coincide with a sum of local $p$-adic height pairings in \cite[\S2-\S4]{Nek93}. 
In this paper, following Nekov\'{a}\v{r}'s results, we define a global $p$-adic height pairing as a sum of local $p$-adic height pairings, which will be convenient for subsequent calculations.
The main reference is \cite[\S 2-4, \S 7]{Nek93}, but he occasionally omitted some details. 
In this section, we supply them for readers' convenience.

\subsection{Setup}\label{subsec:set-up}
Let $F$ be a number field. 
Let $V$ be an object of $\rep_{G_F, E}$. 
For each finite place $v$ of $F$, let $I_{v}$ be the inertia subgroup. 
For $v\mid p$, we also write $F_{v,0}$ for the maximal unramified extension of $\Q_{p}$ inside $F_{v}$.

In this paper, we assume that
{
  \setlength{\leftmargini}{48pt}
  \begin{enumerate}
    \renewcommand{\labelenumi}{\textbf{(Hyp \arabic{enumi})}}
    \item There exists a finite set $S_0$ of places of $F$ such that $V$ is unramified outside $S_0$.
    \item For every place $v$ above $p$, $V_v$ is a semistable representation of $G_{F_v}$.
    \item For every finite place $v$, $L_v(V,0)L_v(V^*(1),0)\neq 0$, where 
    \begin{align}
      L_v(V, 0) \coloneq \begin{cases}
        \det(1-\Frob_v \ |\ V_v^{I_v}) & (v \nmid p), \\
        \det(1-\varphi^{[F_{v,0}:\Q_p]} \ |\ \dcris(V_v)) & (v \mid p).
      \end{cases}
    \end{align}
    Here, $\Frob_v$ is a representative of the geometric Frobenius element of $G_{F_v}$. 
  \end{enumerate}
}

\textbf{(Hyp 3)} implies that 
\begin{align}
  \label{eq:Galois_v}
  &\bullet\ H^i(F_v,V_v)=H^i(F_v,V^*(1)_v)=0\ \ \ (v\nmid p,\ i=0,1,2),\\
  \label{eq:cris_v}
  &\bullet\ \dcris(V_v)^{\varphi=1}=\dcris(V^*(1)_v)^{\varphi=1}=0\ \ \ (v\mid p).
\end{align}

For each finite place $v$ and $* \in \{\mathrm{cris},\mathrm{st}\}$, we define
\begin{align}
  &H^1_{\mathrm{ur}}(F_v,V_v) \coloneq \Ker(H^1(G_{F_v},V_v)\to H^1(I_v,V_v))\quad (v\nmid p),\\
  &H^1_*(F_v,V_v) \coloneq \Ker(H^1(G_{F_v},V_v)\to H^1(G_{F_v},V_v\otimes_{\Q_p}B_*))\quad (v\mid p).
\end{align}
We put $H^1_f \coloneq H^1_{\mathrm{cris}}$. 
Furthermore, for $*\in\{f,\st\}$ and a finite set $S$ of places of $F$ containing $S_0\cup\{v\mid p\}$, we define the Selmer groups 
\begin{align}
  H^1_*(G_{F,S},V) \coloneq \Ker\left(H^1(G_{F,S},V)\to \prod_{\substack{v\in S\\ v\nmid p}}\frac{H^1(F_v,V_v)}{H^1_{\mathrm{ur}}(F_v,V_v)}\times \prod_{\substack{v\in S\\ v\mid p}}\frac{H^1(F_v,V_v)}{H^1_*(F_v,V_v)}\right),
\end{align}
where $G_{F, S}$ is the Galois group of the maximal extension $F_S$ unramified outside $S$ over $F$.
Note that we can ignore archimedean places, since $H^1(\R,V_v)=0$ for every real place $v$ of $F$ (even when $p=2$). 
By the hypothesis \textbf{(Hyp 1)} and \eqref{eq:Galois_v}, $H^1(G_{F,S},V)$ does not depend on the choice of $S$ by the standard argument (see, for example, \cite[Proof of Lemma 4]{Jan89}) and coincides with $H^1(F,V)$. 
The same is true for $H^1_*(G_{F,S},V)$ for $*\in \{\mathrm{cris},\st\}$.

Under \textbf{(Hyp 2)}, we have the canonical exact sequence, for $v\mid p$,
\begin{align}
0\to H^1_f(F_v,V_v)\to H^1_{\mathrm{st}}(F_v,V_v)\to(\dcris(V^*(1)_v)^*)^{\varphi=1}\to 0
\end{align} 
by \cite[Corollary 1.18]{Nek93}. 
It follows from \eqref{eq:cris_v} that $H^1_{\mathrm{st}}(F_v,V_v)=H^1_f(F_v,V_v)$ for every $v\mid p$, and hence $H^1_{\mathrm{st}}(G_{F,S},V)=H^1_f(G_{F,S},V)$.

We define the Bloch--Kato logarithm map (cf.\cite[\S3]{BK90}). 
Fix a place $v$ of $F$ above $p$. 
The fundamental exact sequence 
\begin{align}
  \xymatrix{
    0 \ar[r] & \Q_p\ar[r] & B_{\mathrm{cris}}^{\varphi=1}\ar[r] & B_{\dR}/B_{\dR}^+\ar[r] & 0
  }  
\end{align}
induces a natural surjective map
\begin{align}
  \exp_{V_v}^{\mathrm{BK}}:\ddr(V_v)/\ddr^+(V_v)\to H^1_e(F_v,V_v),
\end{align}
where
\begin{align}
  H^1_e(F_v,V_v) \coloneq \Ker(H^1(F_v,V_v)\to H^1(F_v,V_v\otimes_{\Q_p} B_{\mathrm{cris}}^{\varphi=1})).
\end{align}
We call $\exp_{V_v}^{\mathrm{BK}}$ the \textit{Bloch--Kato exponential map} for $V_v$. 
Note that $H^1_e(F_v,V_v)=H^1_f(F_v,V_v)$ by \eqref{eq:cris_v} and \cite[Corollary 1.16]{Nek93}. 
We also see that $\exp_{V_v}^{\mathrm{BK}}$ is an $E$-linear isomorphism, since the fundamental sequence and \eqref{eq:cris_v} imply that $\Ker(\exp_{V_v}^{\mathrm{BK}})=\dcris(V_v)^{\varphi=1}/H^0(F_v,V_v)=0$.
Then the \textit{Bloch--Kato logarithm map} $\bklog_{V_v}$ for $V_v$ is defined by
\begin{align}
  \bklog_{V_v}:H^1_{f}(F_v,V_v)\overset{(\exp_{V_v}^{\mathrm{BK}})^{-1}}{\longrightarrow}\ddr(V_v)/\ddr^+(V_v)\cong \Hom_{F_{v} \otimes_{\Q_{p}} E}(\ddr^+(V^*(1)_v), D_{\dR}(E(1))). \\
  \label{definition:Bloch-Kato_log}
\end{align}
In this paper, for a $p$-adic representation $X$ satisfying similar hypotheses \textbf{(Hyp 1)}-\textbf{(Hyp 3)} in \S \ref{subsec:set-up}, we identify $H^1_{\st}(F_v,X_v)$ with the tangent space $\ddr(X_v)/\ddr^+(X_v)$ via the Bloch--Kato exponential map whenever necessary, for a place $v$ of $F$ above $p$.

\subsection{Mixed extensions}
For $p$-adic Galois representations $X$ and $Y$ of $G_F$, let $\Ext^1_F(X,Y)$ be the set of isomorphism classes of extensions.
Hence an element of $\Ext^1_F(X,Y)$ is represented by a short exact sequence
\begin{align}
  \xymatrix{
    0\ar[r] & Y\ar[r] & \cal{E} \ar[r] &X\ar[r] &0
  }
\end{align}
in $\rep_{G_F, E}$. 
We denote its isomorphism class by $[\cal{E}]$. 
The set $\Ext^1_F(X,Y)$ has a structure of an abelian group by the \textit{Baer sum} $\wedge$ defined as follows: for $[\cal{E}_1],[\cal{E}_2] \in \Ext^1_F(X,Y)$ represented by
\begin{align}
  \xymatrix{
    0 \ar[r] & Y\ar[r] & \cal{E}_1\ar[r]^{\pi_1} & X \ar[r] & 0, \quad 0 \ar[r] & Y \ar[r] & \cal{E}_2 \ar[r]^{\pi_2} & X \ar[r] & 0
  }  
\end{align}
respectively, $\cal{E}_1\wedge \cal{E}_2$ is defined by
\begin{align}\label{def_Baer_sum}
  \cal{E}_1 \wedge \cal{E}_2 \coloneq \frac{\left\{(u_1,u_2)\in \cal{E}_1 \oplus \cal{E}_2\ \middle|\ \pi_1(u_1) = \pi_2(u_2)\right\}}{\left\{(u_1,u_2)\ \middle|\ u_1,u_2\in Y,\ u_1+u_2=0\right\}},
\end{align}
and the corresponding extension is defined by
\begin{equation}\label{Baer_sum_extension}
  \xymatrix{
  0\ar[r] & Y\ar[r]^-{\iota_0} & \cal{E}_1\wedge \cal{E}_2\ar[r]^{\ \ \ \pi_1(=\pi_2)}& X\ar[r]&0,
  }
\end{equation}
where $\iota_0(y) \coloneq (y,0)$. 
Moreover, we define a structure of an $E$-vector space for $\Ext^1_F(X, Y)$ by the scalar multiplication
\begin{align}\label{extension_scalar}
  a\cdot[\cal{E}] \coloneq \begin{cases}
    0\to Y\longrightarrow \cal{E}\overset{a^{-1}\pi}{\longrightarrow} X\to 0 & (a\neq0), \\
    [X\oplus Y] & (a=0),
  \end{cases}
\end{align}
for $a \in E$. 
For semistable (resp. crystalline) repsentations $X$ and $Y$, we denote by $\Ext^1_{F,\mathrm{st}}(X,Y)$ (resp. $\Ext^1_{F,\mathrm{cris}}(X,Y)$) the $E$-subspace of $\Ext^1_F(X,Y)$ consisting of isomorphism classes of extensions $[\cal{E}]$ for which the class of $\cal{E}_v$ lies in $H^1_{\ur}(F_v,\Hom_E(X,Y))$ for $v \nmid p$, and $\cal{E}_v$ is a semistable (resp. crystalline) representation of $G_{F_v}$ for $v \mid p$. 
It follows that if $X$ is a trivial representation, there exist isomorphisms
\begin{align}\label{isom:Ext_Hom}
  \Ext^1_F(X,Y)\cong \Hom(X,H^1(F,Y)),\quad \Ext^1_{F,*}(X,Y)\cong \Hom(X,H^1_*(F,Y))
\end{align}
for $*\in\{\mathrm{cris},\st\}$ given by sending each extension to the connecting map. 

In this section, let $A$ and $B$ be trivial $p$-adic Galois representations of $G_F$. 
We recall the definition of mixed extensions to define local $p$-adic height pairings. 
Let $e_1=[\cal{E}_1]\in\Ext^1_{F}(A,V)$ and $e_2=[\cal{E}_2]\in\Ext^1_{F}(V,B(1))$ be extensions. 
Throughout the paper, we assume that $\cal{E}_{1,v}$ and $\cal{E}_{2,v}$ are de Rham representations for all $v \mid p$.
\begin{dfn}[{cf. \cite[Definition 4.3]{Nek93}}]
  \begin{itemize}
    \item A \textit{mixed extension} of $e_1$ and $e_2$ is a $p$-adic Galois representation $\cal{E}$ with an increasing filtration of subrepresentations
    \begin{align}
      0=W_{-3}\cal{E} \subseteq W_{-2}\cal{E} \subseteq W_{-1}\cal{E} \subseteq W_0\cal{E} = \cal{E}
    \end{align}
    such that there exist two isomorphisms $W_{-1}\cal{E} \cong \cal{E}_2$ and $W_0\cal{E}/W_{-2}\cal{E}\cong \cal{E}_1$. 
    This is equivalent to the condition that there exist the morphisms $f_1:\cal{E}_2\to \cal{E}$, $f_2:B(1) \to \cal{E}$, $f_3:\cal{E} \to A$, and $f_4:\cal{E} \to \cal{E}_1$ for which the diagram of Figure \ref{def_mixed_extension} with exact rows and columns is commutative.
    \begin{figure}[H]
      \begin{align}
        \xymatrix{
          & & 0 \ar[d] & 0 \ar[d] & \\
          0 \ar[r] & B(1) \ar@{=}[d] \ar[r] & \cal{E}_2\ar^{f_1}[d] \ar[r] & V \ar[r] \ar[d] & 0 \\
          0 \ar[r] & B(1) \ar^{f_2}[r] & \cal{E} \ar^{f_4}[r] \ar^{f_3}[d] & \cal{E}_1 \ar[r] \ar[d] & 0 \\
          & & A \ar@{=}[r] \ar[d] & A \ar[d] \\
          & & 0 & 0 & 
        } \label{eq:def_mixed}
      \end{align}
    \caption{A mixed extension}\label{def_mixed_extension}
    \label{Definition:Mixed_extensions}
    \end{figure}

    \item Let $\cal{E}$, $\cal{E}'$  be mixed extensions of $e_1$ and $e_2$. 
    A morphism from $\cal{E}$ to $\cal{E}'$ consists of $g:\cal{E} \to \cal{E}'$, $g_1:\cal{E}_1\to \cal{E}_1'$, and $g_2:\cal{E}_2 \to \cal{E}_2'$ such that the diagram of Figure \ref{diag_mor_mixed_extension} is commutative. 
    A morphism from $\cal{E}$ to $\cal{E}'$ is called an isomorphism of mixed extensions if $g:\cal{E} \to \cal{E}'$, $g_1:\cal{E}_1 \to \cal{E}_1'$, and $g_2:\cal{E}_2 \to \cal{E}_2'$ are isomorphisms.
    \begin{figure}[H]
      \centering
      \includegraphics{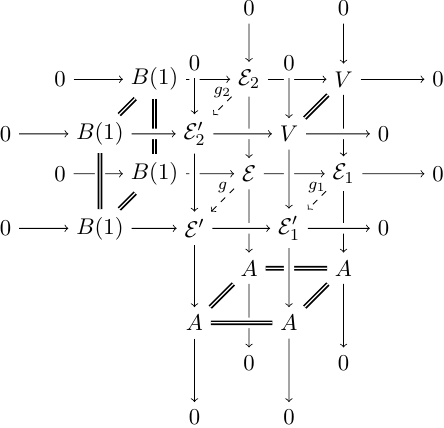}
      \caption{A morphism of mixed extensions}\label{diag_mor_mixed_extension}
      \label{diagram:mixed_extension}
    \end{figure}
    We denote by $\MExt(e_1,e_2)$ the set of isomorphism classes of mixed extensions of $e_1$ and $e_2$.
  \end{itemize}
\end{dfn}

We also introduce \textit{essentially unramified} mixed extensions to show well-definedness of the global $p$-adic height pairings (see \S\ref{sec:global_height}).

\begin{prop}\label{prop:l-adic_splitting}
  For $[\cal{E}]\in \MExt(e_1,e_2)$ and $v\nmid p$, there exists a unique $[\cal{C}_v]\in\Ext^1_{F_v}(A,B(1))$ satisfying $\cal{E}_v \cong V_v\oplus \cal{C}_v$ as mixed extensions of $E$-representations of $G_{F_{v}}$. 
\end{prop}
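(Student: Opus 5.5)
The plan is to use only the vanishing \eqref{eq:Galois_v}, which holds at $v \nmid p$, together with the fact that $A$ and $B$ are trivial representations. In outline: first split the two graded pieces $\cal{E}_{1,v}$ and $\cal{E}_{2,v}$ uniquely, then assemble a complementary decomposition of $\cal{E}_v$ by hand, and finally get uniqueness from $\Hom$-vanishing.

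\emph{Step 1: the graded pieces split, uniquely.}
\begin{itemize}
\item By \eqref{isom:Ext_Hom} applied locally, the class of $\cal{E}_{1,v}$ lies in $\Ext^1_{F_v}(A,V_v)\cong \Hom(A,H^1(F_v,V_v))$. This group is $0$ by \eqref{eq:Galois_v}. Hence $\cal{E}_{1,v}\cong V_v\oplus A$.
\item Likewise, the class of $\cal{E}_{2,v}$ lies in $H^1(F_v,\Hom_E(V,B(1)))\cong H^1(F_v,V^*(1)_v)\otimes_E B$, which is also $0$. Hence $\cal{E}_{2,v}\cong B(1)\oplus V_v$.
\item Both splittings are unique. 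Indeed, $\Hom_{G_{F_v}}(A,V_v)=H^0(F_v,V_v)\otimes A^*=0$ and $\Hom_{G_{F_v}}(V_v,B(1))=H^0(F_v,V^*(1)_v)\otimes B=0$. So the $G_{F_v}$-stable complement of $V_v$ in $\cal{E}_{1,v}$ is unique, and so is the $G_{F_v}$-stable complement of $B(1)$ in $\cal{E}_{2,v}$.
\end{itemize}

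\emph{Step 2: build the decomposition of $\cal{E}_v$.} Let $V'\subset \cal{E}_{2,v}$ be the unique $G_{F_v}$-stable complement of $B(1)$, and regard it inside $\cal{E}_v$ via $f_1$. Let $A'\subset\cal{E}_{1,v}$ be the unique $G_{F_v}$-stable complement of $V_v$. Put $\cal{C}_v\coloneq f_4^{-1}(A')\subset \cal{E}_v$.

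This $\cal{C}_v$ is a subrepresentation containing $f_2(B(1))$, and it sits in an exact sequence $0\to B(1)\to\cal{C}_v\to A\to 0$, so it defines a class in $\Ext^1_{F_v}(A,B(1))$. We then check two things.
\begin{itemize}
\item $V'\cap\cal{C}_v=0$. The map $f_4$ is injective on $V'$, since $V'\cap B(1)=0$, and $f_4$ sends $V'$ onto the copy $V_v\subset \cal{E}_{1,v}$, because the upper-right square of Figure \ref{def_mixed_extension} commutes. On the other hand $f_4(\cal{C}_v)\subset A'$, and $V_v\cap A'=0$.
\item $\cal{E}_v=V'\oplus\cal{C}_v$. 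This follows from the intersection statement and a dimension count: $\dim V+\dim A+\dim B=\dim\cal{E}$.
\end{itemize}

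The filtration on $V_v\oplus\cal{C}_v$ is $W_{-2}=B(1)$ and $W_{-1}=B(1)\oplus V_v$. One checks directly that the decomposition respects $W_\bullet$ and induces the given identifications $W_{-1}\cong\cal{E}_{2,v}$ and $W_0/W_{-2}\cong\cal{E}_{1,v}$. Hence it is an isomorphism of mixed extensions.

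\emph{Step 3: uniqueness.} Suppose $\cal{E}_v\cong V_v\oplus\cal{C}$ as mixed extensions. Then $\cal{C}\cong \cal{E}_v/V''$, where $V''$ is the image of $V_v$. The subspace $V''$ lies in $W_{-1}\cal{E}_v\cong\cal{E}_{2,v}$ and is a $G_{F_v}$-stable complement of $B(1)$ there. By the uniqueness in Step 1, $V''=V'$. Therefore $[\cal{C}]=[\cal{E}_v/V']=[\cal{C}_v]$ in $\Ext^1_{F_v}(A,B(1))$.

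\emph{Expected obstacle.} The only delicate point is bookkeeping: making sure the decomposition is one of mixed extensions, meaning it is compatible with $f_1,\dots,f_4$, and not merely a decomposition of representations. This reduces to the commutativity of Figure \ref{def_mixed_extension}, together with the rigidity coming from $H^0(F_v,V_v)=H^0(F_v,V^*(1)_v)=0$.
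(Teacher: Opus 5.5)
Your proof is correct and is essentially the paper's argument written out with subrepresentations instead of $\Ext$-functoriality. Your $\cal{C}_v=f_4^{-1}(A')$ is exactly the pullback $j_A^*[\cal{E}_v]$ the paper uses, and your complement $V'$ realizes its vanishing $j_V^*[\cal{E}_v]=[\cal{E}_{2,v}]=0$; the only difference is in uniqueness, where the paper uses rigidity of the section $j_A$ (from $H^0(F_v,V_v)=0$) and then applies $j_A^*$, while you use rigidity of the complement of $B(1)$ in $\cal{E}_{2,v}$ (from $H^0(F_v,V^*(1)_v)=0$), and both suffice.
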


\begin{proof}
  By \eqref{eq:Galois_v}, we see that
  \begin{align}
    &\Ext^{i}_{F_v}(A,V_v)=\Hom(A,H^{i}(F_v,V_v))=0, \\
    &\Ext^{i}_{F_v}(V_v,B(1))\cong \Ext^{i}_{F_v}(B^{*}, V^{*}(1)_v)=\Hom(B^*,H^{i}(F_v,V^*(1)_v))=0
  \end{align}
  for $i = 0, 1$.
  It follows that we have $\cal{E}_{1,v}\cong A\oplus V_v$ and $\cal{E}_{2,v}\cong B(1)\oplus V_v$ as $G_{F_v}$-modules. 
  Let $j_{A}:A \to \cal{E}_{1, v}$ denote a $G_{F_{v}}$-section, which is unique since $\Ext_{F_{v}}^{0}(A, V_{v}) = 0$, and let $j_{V}:V_{v} \to \cal{E}_{1, v}$ be the inclusion.
  Then we have
  \begin{align}
    (j_{A}^{*}, j_{V}^{*}):\Ext^1_{F_v}(\cal{E}_{1,v},B(1)) \cong \Ext^1_{F_v}(A\oplus V_v,B(1))\cong\Ext^1_{F_v}(A,B(1))\times \Ext^1_{F_v}(V_v,B(1)),
  \end{align}
  where the inverse of the last isomorphism sends $([X],[Y])$ to $[X \oplus_{B(1)} Y]$.  
  Here $\oplus_{B(1)}$ means the fiber sum over $B(1)$.   
  Since we have $[\cal{E}_{v}] \in \Ext_{F_{v}}^{1}(\cal{E}_{1, v}, B(1))$ and the preimage of $j_V(V_v)$ under $\cal{E}_{v} \to \cal{E}_{1, v}$ is $\cal{E}_{2, v}$ by definition, we see that $j_{V}^{*}[\cal{E}_{v}]=[\cal{E}_{2, v}] = 0$.
  Putting $[\cal{C}_v] \coloneq j_{A}^{*}[\cal{E}_v]$, we obtain
  \begin{align}
    \cal{E}_v \cong \cal{C}_v\oplus_{B(1)}\bigl(V_v\oplus B(1)\bigr)\cong \cal{C}_v \oplus V_v
  \end{align}
  as mixed extensions.
  Conversely, let $\phi:V_{v} \oplus \cal{C}' \overset{\sim}{\to} \cal{E}_{v}$ be another isomorphism of mixed extensions. 
  Then $\phi$ descends to an isomorphism $\overline{\phi} : V_{v} \oplus A \overset{\sim}{\to} \mathcal{E}_{1, v}$ restricting to the inclusion on $V_{v}$ and to a $G_{F_{v}}$-section on $A$, whence $\overline{\phi} = j_{V} \oplus j_{A}$.
  Therefore, we obtain $[\cal{E}_{v}] = [V_{v} \oplus \cal{C}']$ in $\Ext_{F_{v}}^{1}(\cal{E}_{1, v}, B(1))$, and applying $j_{A}^{*}$ gives $[\cal{C}'] = [\cal{C}_{v}]$.
\end{proof}

\begin{dfn}[{\cite[Definition 4.3]{Nek93}}]
  For $[\cal{E}] \in \MExt(e_1,e_2)$ and $v\nmid p$, $\cal{E}$ is called \textit{essentially unramified} at $v$ if $\cal{C}_v$ in Proposition \ref{prop:l-adic_splitting} is unramified.
\end{dfn}

We define $\MExt_{\st}(e_1,e_2)$ as the set of elements $[\cal{E}]\in\MExt(e_1,e_2)$ satisfying
\begin{enumerate}
  \renewcommand{\labelenumi}{(\roman{enumi})}
  \item $\cal{E}$ is essentially unramified at all $v\nmid p$,
  \item $\cal{E}_v$ is semistable for all $v\mid p$.
\end{enumerate}

\begin{rem}
  We give the above definition inspired by the crystalline case in \cite[Proposition 4.4 (2)]{Nek93}. 
  However, the condition (ii) automatically follows from \cite[Corollary 1.27 (2) and Proposition 4.4 (4)]{Nek93}, and hence it can be omitted, in fact.
\end{rem}

For $[X]\in \Ext^1_F(A,B(1))$, let $[\overline{X}]$ be the image of $[X]$ under the morphism $\Ext^1_F(A,B(1))\to \Ext^1_F(A,E_2)$. 
We define the action
\begin{align}
  \Psi:\Ext^1_F(A,B(1))\times \MExt(e_1,e_2)\to \MExt(e_1,e_2)
\end{align}
by $\Psi([X],[\cal{E}]) \coloneq [\overline{X}]\wedge[\cal{E}]$, where $\wedge$ is the Baer sum of $\Ext_F^1(A,\cal{E}_2)$ defined in \eqref{def_Baer_sum}. 
It is known that this action $\Psi$ is well-defined. 
The following proposition describes the relationship between $\MExt(e_1,e_2)$ and $\Ext^1_F(A,B(1))$, which will be used in \S\ref{sec:differ_splitting}.

\begin{prop}[cf.{\cite[Proposition 4.4 (1), (2)]{Nek93}}]\label{prop:principal_mixed_extension}
  The sets $\MExt(e_1,e_2)$ and $\MExt_{\st}(e_1,e_2)$ are principal homogeneous spaces of $\Ext^1_{F}(A,B(1))$ and $\Ext^1_{F, \st}(A,B(1))$ by $\Psi$ respectively.
\end{prop}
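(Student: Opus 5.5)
The plan has two parts: the statement for $\MExt(e_1,e_2)$ and its refinement for $\MExt_{\st}(e_1,e_2)$.

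\textbf{Step 1: non-emptiness.} I would first show that $\MExt(e_1,e_2)$ is non-empty. A mixed extension is the same as a class $[\cal{E}]\in\Ext^1_F(\cal{E}_1,B(1))$ whose pullback along $V\hookrightarrow\cal{E}_1$ is $[\cal{E}_2]$. The long exact sequence attached to $0\to V\to\cal{E}_1\to A\to 0$ gives
\[
\Ext^1_F(\cal{E}_1,B(1))\to\Ext^1_F(V,B(1))\to\Ext^2_F(A,B(1)).
\]
The obstruction to lifting $e_2$ is therefore the cup product $e_1\cup e_2\in H^2(F,\Hom(A,B(1)))$. I expect this obstruction to be the main obstacle: global $H^2$ does not vanish outright. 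I would try to show the cup product vanishes by Poitou--Tate. By \eqref{eq:Galois_v}, its local components at $v\nmid p$ vanish. For $v\mid p$, I would use the de Rham hypothesis together with orthogonality of $H^1_f(F_v,V_v)$ and $H^1_f(F_v,V^*(1)_v)$. This step really requires $e_1,e_2$ to lie in the $f$-parts (which is the setting of Nekov\'a\v{r}, Proposition 4.4), so I would cite Nekov\'a\v{r} for existence rather than reprove it in general.

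\textbf{Step 2: freeness and transitivity for $\MExt(e_1,e_2)$.}
\begin{itemize}
\item \emph{Action is well defined and lands in mixed extensions.} The Baer sum $[\overline{X}]\wedge[\cal{E}]$ computed in $\Ext^1_F(A,\cal{E}_2)$ is again a mixed extension. Pushing along $\cal{E}_2\to V$ turns $\overline{X}$ into the split extension, so the quotient by $B(1)$ is still $\cal{E}_1$, and $W_{-1}$ is still $\cal{E}_2$.
\item \emph{Transitivity.} Given two mixed extensions $\cal{E},\cal{E}'$, both are extensions of $A$ by $\cal{E}_2$. Their Baer difference in $\Ext^1_F(A,\cal{E}_2)$ maps to $0$ in $\Ext^1_F(A,V)$, since both push to $e_1$. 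By exactness of
\[
\Ext^1_F(A,B(1))\to\Ext^1_F(A,\cal{E}_2)\to\Ext^1_F(A,V),
\]
the difference comes from some $[X]\in\Ext^1_F(A,B(1))$.
\item \emph{Freeness.} Suppose $\overline{X}\wedge\cal{E}\cong\cal{E}$. Then $[\overline{X}]$ lies in the kernel of $\Ext^1_F(A,B(1))\to\Ext^1_F(A,\cal{E}_2)$, which is the image of $\Hom(A,V)=\Hom(A,H^0(F,V))$. I need $H^0(F,V)=0$; this follows from the local vanishing $H^0(F_v,V_v)=0$ at any $v\nmid p$ in \eqref{eq:Galois_v}.
\end{itemize}
One more subtlety must be checked: isomorphism classes of mixed extensions allow automorphisms $g_1,g_2$ of $\cal{E}_1,\cal{E}_2$. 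These are extension automorphisms, hence of the form identity plus $\Hom(A,V)$ or $\Hom(V,B(1))$. Both groups vanish by \eqref{eq:Galois_v} (using $H^0(F,V^*(1))=0$ for the second), so the identification with classes in $\Ext^1_F(A,\cal{E}_2)$ is faithful.

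\textbf{Step 3: the semistable version.} I would show three things for $\MExt_{\st}(e_1,e_2)$.
\begin{itemize}
\item \emph{Closure under the action.} Let $[\cal{E}]\in\MExt_{\st}$ and $[X]\in\Ext^1_{F,\st}(A,B(1))$. Localization commutes with Baer sums, so $\cal{C}_v$ changes to $X_v\wedge\cal{C}_v$ by the uniqueness in Proposition \ref{prop:l-adic_splitting}. This remains unramified.
\item \emph{Semistability at $v\mid p$.} This is preserved because semistable representations form an abelian subcategory closed under extensions of this type. Alternatively, by the preceding Remark it is automatic.
\item \emph{The difference class lies in $\Ext^1_{F,\st}$.} Conversely, if $\cal{E},\cal{E}'$ both lie in $\MExt_{\st}$, the difference class $X$ satisfies $X_v=\cal{C}'_v-\cal{C}_v$, which is unramified for $v\nmid p$. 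At $v\mid p$, $X_v$ is a subquotient of a semistable representation, so $X_v$ is semistable.
\end{itemize}
Finally, non-emptiness of $\MExt_{\st}$ comes from Nekov\'a\v{r} again. Alternatively, take any $\cal{E}$ and twist by a global class $X$ correcting the finitely many ramified local $\cal{C}_v$; this requires surjectivity of $H^1(F,B(1))$ onto the relevant local quotients, which follows from Kummer theory with $p$-adic coefficients.
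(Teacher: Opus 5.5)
Your proposal is correct in substance. The paper gives no argument for this proposition beyond citing Nekov\'a\v{r}'s Proposition 4.4 (1), (2), so your route differs only in that you reconstruct the standard homological argument behind that citation. Concretely:
\begin{itemize}
\item transitivity comes from exactness of $\Ext^1_F(A,B(1))\to\Ext^1_F(A,\cal{E}_2)\to\Ext^1_F(A,V)$;
\item freeness comes from $H^0(F,V)=0$;
\item passing to isomorphism classes is harmless because $\Hom_{G_F}(A,V)=\Hom_{G_F}(V,B(1))=0$;
\item the $\st$-refinement comes from the uniqueness in Proposition \ref{prop:l-adic_splitting}.
\end{itemize}
You defer existence to Nekov\'a\v{r}, as the paper does. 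The gain is that you show exactly where \eqref{eq:Galois_v} enters, which the bare citation leaves implicit.

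Three points should be tightened.

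\textbf{Semistability of the Baer sum.} Semistable representations are \emph{not} closed under extensions. For instance, the extension of $\Q_p$ by $\Q_p$ given by $\log_p\circ\chi_{\mathrm{cyc}}$ is not even de Rham. What you actually need is closure under direct sums and subquotients. This suffices because $\overline{X}$ is a quotient of $X\oplus\cal{E}_2$, and $\overline{X}\wedge\cal{E}$ is a subquotient of $\overline{X}\oplus\cal{E}$.

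\textbf{Existence.} Restricting to $f$-parts costs nothing here. At $v\mid p$, the standing de Rham assumption together with \textbf{(Hyp 3)} gives $H^1_g=H^1_f$ for both $V_v$ and $V^*(1)_v$. At $v\nmid p$, $e_{1,v}=0$. Hence $e_1\cup e_2$ vanishes locally everywhere. To pass to the global statement, work in $H^2(G_{F,S},E(1))$ and use $\Sha^2_S(E(1))=0$. This is dual to $\Sha^1_S(E)=0$, which holds because the class group is finite.

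\textbf{Your alternative proof that $\MExt_{\st}(e_1,e_2)\neq\emptyset$.} It should not start from an arbitrary $\cal{E}$. The group $H^1(G_F,E(1))$ contains classes with non-zero valuation at infinitely many primes, e.g.\ the compatible system $\bigl(\prod_{k<n}\ell_k^{p^k}\bigr)_n$ in $\varprojlim_n\Q^\times/\Q^{\times p^n}$. Twisting by such a class shows that a general mixed extension can be essentially ramified at infinitely many places. Instead, start from an $\cal{E}$ built over $G_{F,S}$. Then correct the finitely many ramified $\cal{C}_v$ by a class that stays unramified at every other $v\nmid p$, e.g.\ one built from generators of $\mathfrak{q}^h$, where $h$ is the class number. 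Again it is finiteness of the class group, not Kummer theory alone, that makes this work.
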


\subsection{Local $p$-adic height pairings: the $\ell$-adic cases}\label{def_local_height_l-adic}
In the following, we fix a non-trivial continuous homomorphism $l_F:\A_{F}^{\times}/F^{\times}\to \Q_p$, where $\A_{F}^{\times}$ is the id\`{e}le group of $F$. 
For every place $v$ of $F$, let $l_{F,v}$ be the restriction of $l_F$ to $F_v^{\times}$. 
Note that $l_F$ can be extended to $l_F:\A_F^{\times}\hat{\otimes}_{\Z_{p}} E \to E$ whose restriction to $F^{\times}\hat{\otimes}_{\Z_{p}} E$ is zero.
Let $e_1=[\cal{E}_1]\in\Ext^1_{F,\mathrm{st}}(A,V)$ and $e_2=[\cal{E}_2]\in\Ext^1_{F,\mathrm{st}}(V,B(1))$. 
We fix a mixed extension $[\cal{E}] \in \MExt(e_1,e_2)$. 
First, we define local $p$-adic height pairings. 

Let $v \nmid p$. Since $H^1(F_v, E(1)) \cong F_v^{\times}\widehat{\otimes}_{\Z_{p}} E$ by the Kummer map, we can regard $l_{F, v}$ as a continuous homomorphism 
\begin{align}
  l_{F, v}:H^1(F_v, E(1)) \to E
\end{align}
and set 
\begin{align*}
  1 \otimes l_{F, v}:\Hom_{E}(A,B) \otimes_{E} H^1(F_v,E(1))\to \Hom_{E}(A,B).  
\end{align*}
Then, taking a unique extension 
\begin{align}
  [\cal{C}_v]\in \Ext^1_{F_v}(A,B(1))=\Hom_{E}(A,B)\otimes_{E} H^1(F_v, E(1))
\end{align}
appearing in Proposition \ref{prop:l-adic_splitting}, we define the \textit{local $p$-adic height pairing at $v\nmid p$} attached to a mixed extension $[\cal{E}] \in \MExt(e_1,e_2)$ by 
\begin{align}
  h_{v,\cal{E}} \coloneq -(1\otimes l_{F, v})([\cal{C}_v])\in \Hom_{E}(A,B).
\end{align}
The fact that $l_{F, v}$ factors through $\ord_v$ implies the following proposition.

\begin{prop}[{\cite[4.6]{Nek93}}]\label{prop:vanishing_essentially_unramified}
  For $[\cal{E}] \in \MExt_{\st}(e_1,e_2)$, we have $h_{v,\cal{E}} = 0$.
\end{prop}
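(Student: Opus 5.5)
Since $[\cal{E}] \in \MExt_{\st}(e_1,e_2)$, condition (i) in the definition of $\MExt_{\st}(e_1,e_2)$ says that $\cal{E}$ is essentially unramified at $v$. By definition, this means that the class $[\cal{C}_v]\in\Ext^1_{F_v}(A,B(1))$ attached to $\cal{E}$ by Proposition \ref{prop:l-adic_splitting} is unramified. It therefore suffices to show that $1\otimes l_{F,v}$ kills the unramified part of $\Ext^1_{F_v}(A,B(1))$.

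Since $A$ and $B$ are trivial representations, we will use the identification
\begin{align}
  \Ext^1_{F_v}(A,B(1))\cong \Hom_E(A,B)\otimes_E H^1(F_v,E(1)).
\end{align}
Under it, the subspace of classes that become split on $I_v$ corresponds to $\Hom_E(A,B)\otimes_E H^1_{\ur}(F_v,E(1))$, where $H^1_{\ur}(F_v,E(1))=\Ker\bigl(H^1(F_v,E(1))\to H^1(I_v,E(1))\bigr)$. To check this, choose bases of $A$ and $B$ to reduce to the case $A=B=E$; then both descriptions are the kernel of restriction to $I_v$.

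The key step is that the Kummer isomorphism $H^1(F_v,E(1))\cong F_v^{\times}\widehat{\otimes}_{\Z_p}E$ identifies $H^1_{\ur}(F_v,E(1))$ with $\mathcal{O}_{F_v}^{\times}\widehat{\otimes}_{\Z_p}E$. Because $v\nmid p$, the group $\mathcal{O}_{F_v}^{\times}$ is the product of a finite group and a pro-$\ell$ group with $\ell\neq p$, so $\mathcal{O}_{F_v}^{\times}\widehat{\otimes}_{\Z_p}E=0$. Consequently $H^1(F_v,E(1))\cong E$ via $\ord_v$, and the unramified subspace $H^1_{\ur}(F_v,E(1))$ is zero. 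One can also see this directly: $H^1_{\ur}(F_v,E(1))=E(1)^{I_v}/(\Frob_v-1)=E(1)/(q_v^{-1}-1)E(1)=0$, because $q_v\neq1$.

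It follows that $[\cal{C}_v]=0$, and hence $h_{v,\cal{E}}=-(1\otimes l_{F,v})([\cal{C}_v])=0$. The statement's hint that $l_{F,v}$ factors through $\ord_v$ is the same fact viewed differently: $l_{F,v}$ is continuous with values in $\Q_p$, so it kills the profinite prime-to-$p$ group $\mathcal{O}_{F_v}^{\times}$.

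The only real point to verify is the compatibility in the first step. One must check that "$\cal{C}_v$ unramified" in the definition means the extension splits over $I_v$, and that this matches the tensor decomposition. This is a direct cocycle-level comparison and should be routine.
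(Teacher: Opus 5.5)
Your proof is correct and is essentially the paper's one-line argument: an unramified $\mathcal{C}_v$ has class in $\Hom_E(A,B)\otimes_E H^1_{\ur}(F_v,E(1))$, which corresponds under the Kummer map to units, and these are killed because $l_{F,v}$ factors through $\ord_v$. The only difference is that you go one step further and observe that $H^1_{\ur}(F_v,E(1))=0$ for $v\nmid p$, so $[\mathcal{C}_v]=0$ outright and no property of $l_{F,v}$ is needed.
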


\subsection{Local $p$-adic height pairings: the $p$-adic cases}\label{def_local_height_p-adic}
Let $L/E$ be a finite extension. 
For each place $v$ above $p$, we fix an $E$-linear splitting $N_v:\ddr(V_v)_L/\ddr^+(V_v)_L\to \ddr(V_v)_L$ of the exact sequence
\begin{align}\label{Hodgefil}
  \xymatrix{
    0 \ar[r] & \ddr^+(V_v)_L \ar[r] & \ddr(V_v)_L \ar[r] & \ddr(V_v)_L/\ddr^+(V_v)_L \ar[r] & 0.  
  }  
\end{align}
Before Proposition \ref{Prop:phi-splittings}, we introduce the functor $\mathscr{D}$.

\begin{dfn}[cf.{\cite[2.6]{Nek93}}]\label{Definition:functor_D}
  Let $\MF_{F_v \otimes_{\Q_{p}} E}$ be the category of filtered modules over $F_v \otimes_{\Q_{p}} E$ and let $\MF_{F_v \otimes_{\Q_{p}} E}(\varphi,N)$ be the category of filtered $(\varphi,N)$-modules over $F_{v, 0} \otimes_{\Q_{p}} E$ (see \cite{Fon79} for the detailed definitions of them). 
  We define the contravariant functor $\mathscr{D}:\MF_{F_v \otimes_{\Q_{p}} E} \to \MF_{F_v \otimes_{\Q_{p}} E}$ by $\mathscr{D}(D) \coloneq \Hom_{F_v \otimes_{\Q_{p}} E}(D, D_{\dR}(E(1)))$ and the contravariant functor $\mathscr{D}_0:\MF_{F_v \otimes_{\Q_{p}} E}(\varphi,N) \to \MF_{F_v \otimes_{\Q_{p}} E}(\varphi,N)$ by $\mathscr{D}_0(D) \coloneq \Hom_{F_{v, 0} \otimes_{\Q_{p}} E}(D, D_{\st}(E(1)))$.
\end{dfn}

The following proposition is the key to the construction of local $p$-adic height pairings and calculation of the Bloch--Kato logarithm map.

\begin{prop}[cf.{\cite[\S\S 3.4-3.5]{Nek93}}]\label{Prop:phi-splittings}
  Let $v \mid p$. 
  For a semistable $p$-adic representation $X$ of $G_{F_v}$ satisfying \textbf{(Hyp 3)} in \S \ref{subsec:set-up} and $[Z]\in\Ext^1_{F_v,\st}(B,X)$, there exists a canonical $(\varphi,N)$-equivariant $(F_{v, 0} \otimes_{\Q_{p}} E)$-linear splitting $s_v: F_{v, 0} \otimes_{\Q_p} B \to \dst(Z)$ of the exact sequence
  \begin{align}\label{seq:V_B(1)}
    \xymatrix{
      0\ar[r] & \dst(X)\ar[r] & \dst(Z)\ar[r] & F_{v, 0} \otimes_{\Q_p} B \ar[r] & 0.
    }
  \end{align}
\end{prop}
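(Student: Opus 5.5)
The plan is to reduce the statement to the vanishing of the cohomology of the two-term $(\varphi,N)$-complex attached to $D \coloneq \dst(X)$, and to use \textbf{(Hyp 3)} together with a duality argument to obtain that vanishing.

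\textbf{Reduction.} Since $X$, $B$ and $Z$ are semistable and $\dst$ is exact on semistable representations, the sequence \eqref{seq:V_B(1)} is an exact sequence of $(\varphi,N)$-modules over $F_{v,0}\otimes_{\Q_p}E$. On $\dst(B)=F_{v,0}\otimes_{\Q_p}B$ we have $\varphi=\sigma\otimes 1$ and $N=0$. Fixing an $E$-basis of $B$, we may reduce to $B=E$ with basis element $b$.

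\textbf{The complex.} Choose any $F_{v,0}\otimes E$-linear lift $d\in\dst(Z)$ of $1\otimes b$. Then $y\coloneq(\varphi-1)d$ and $z\coloneq Nd$ lie in $D$. The relation $N\varphi=p\varphi N$ gives $Ny=(p\varphi-1)z$. Consider the complex of $\Q_p$-vector spaces
\begin{align}
  C^\bullet(D):\quad D\xrightarrow{\ x\mapsto((\varphi-1)x,\,Nx)\ } D\oplus D\xrightarrow{\ (y,z)\mapsto Ny-(p\varphi-1)z\ } D .
\end{align}
A $(\varphi,N)$-equivariant section sending $1\otimes b$ to $d-x$ exists exactly when $(y,z)$ is the coboundary of $x$. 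Hence existence follows from $H^1(C^\bullet(D))=0$.

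For uniqueness, the difference of two equivariant sections is determined by an element of $D^{\varphi=1,N=0}=\dcris(X)^{\varphi=1}=H^0(C^\bullet(D))$. This space vanishes by \eqref{eq:cris_v} (applied to $X$ in place of $V$). Uniqueness makes the section canonical, and it is automatically $F_{v,0}\otimes E$-linear: transporting it by any scalar gives another equivariant section, which must coincide with it.

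\textbf{Vanishing of $H^1$.} The terms of $C^\bullet(D)$ have $\Q_p$-dimensions $n,2n,n$, so the Euler characteristic is zero and
\begin{align}
  \dim H^1 = \dim H^0+\dim H^2 .
\end{align}
It therefore suffices to show $H^2(C^\bullet(D))=D/\bigl(ND+(p\varphi-1)D\bigr)=0$.

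For this I would use the functor $\mathscr{D}_0$ of Definition \ref{Definition:functor_D}. The natural pairing $D\times\mathscr{D}_0(D)\to\dst(E(1))$ (on which $\varphi=p^{-1}$ and $N=0$), composed with the trace $F_{v,0}\to\Q_p$, should identify $H^2(C^\bullet(D))$ with the $\Q_p$-dual of $H^0(C^\bullet(\mathscr{D}_0(D)))$. Indeed, the adjoint of $N$ is $-N$, and the adjoint of $p\varphi-1$ becomes $\varphi^{-1}-1$ up to semilinear twisting, whose kernel is $\ker(\varphi-1)$. Since $\mathscr{D}_0(\dst(X))\cong\dst(X^*(1))$, this gives
\begin{align}
  H^0(C^\bullet(\mathscr{D}_0(D))) = \dcris(X^*(1))^{\varphi=1}=0
\end{align}
by \eqref{eq:cris_v}.

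\textbf{Main obstacle.} I expect the hardest step to be checking this duality carefully, given the $\sigma$-semilinearity of $\varphi$ over $F_{v,0}$. If the duality becomes messy, the fallback is to work $\Q_p$-linearly with $\varphi^f$, where $f=[F_{v,0}:\Q_p]$, which is $F_{v,0}$-linear. One would then use the $N$-filtration on $D$ to show directly that $ND+(p\varphi-1)D=D$: the key input is that $p\varphi-1$ is invertible on $\Coker N$, which is dual to $\ker N=\dcris(X)$ twisted; this in turn reduces to \textbf{(Hyp 3)} for $X^*(1)$, i.e. the condition $L_v(X^*(1),0)\neq0$.
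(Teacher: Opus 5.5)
Your proof is correct, and it is organized differently from the paper's.

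\textbf{The paper's route.} The paper never introduces the complex $C^\bullet(D)$. It proceeds in three steps:
\begin{enumerate}
\item It applies the snake lemma to $N$ to get $0\to\dcris(X)\to\dcris(Z)\to F_{v,0}\otimes_{\Q_p}B\to\dst(X(-1))/N\dst(X)$.
\item It shows $(\dst(X(-1))/N\dst(X))^{\varphi=1}=0$ by dualizing against $\dcris(X^*(1))$.
\item It applies the snake lemma for $\varphi-1$ twice, using also that $\varphi-1$ is bijective on $\dcris(X)$. This gives an explicit isomorphism $\pi'\colon\dcris(Z)^{\varphi=1}\xrightarrow{\sim}B$, and the paper sets $s_v(a\otimes b)=a\,\pi'^{-1}(b)$.
\end{enumerate}

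\textbf{Same inputs.} In your language, the paper kills the two graded pieces $\Coker(\varphi-1\mid D^{N=0})$ and $\ker(p\varphi-1\mid D/ND)$ of $H^1$ of the double complex whose total complex is $C^\bullet(D)$. By finite-dimensionality, the second vanishing is equivalent to your $H^2(C^\bullet(D))=0$; it is exactly the input of your fallback. So both arguments use the same facts: $\dcris(X)^{\varphi=1}=0$ and, via duality, $\dcris(X^*(1))^{\varphi=1}=0$. You replace the diagram chase by the Euler characteristic count $h^1=h^0+h^2$.

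\textbf{What each route buys.} Your route gets existence and uniqueness from a single cocycle computation. Uniqueness is what pins down ``canonical'', and in the paper it is only implicit in $\pi'$. Your trace-pairing check also handles the $\sigma$-semilinearity explicitly, via the $\sigma$-invariance of the trace. The paper's route avoids the dimension count and exhibits the section directly as the inverse of $\dcris(Z)^{\varphi=1}\cong B$. Your section likewise sends $1\otimes b$ to $d-x\in\dst(Z)^{\varphi=1,N=0}=\dcris(Z)^{\varphi=1}$, so the two sections coincide.

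\textbf{One small imprecision.} $(F_{v,0}\otimes_{\Q_p}E)$-linearity does not follow from ``transporting by any scalar''. Conjugating by a scalar not fixed by $\varphi$ destroys $\varphi$-equivariance. Linearity holds instead because you define the section by $a\otimes b_i\mapsto a(d_i-x_i)$ on an $E$-basis $\{b_i\}$ of $B$ and extend $E$-linearly. $E$-linearity could also be deduced from uniqueness by conjugating with elements of $E^\times$.
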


\begin{proof}
  The proposition may be well-known but the authors do not know references including the proof. 
  We give a proof for the reader's convenience. 
  Applying the snake lemma to the commutative diagram
  \begin{align}
    \xymatrix{
      0 \ar[r] & \dst(X) \ar[d]^N \ar[r] & \dst(Z) \ar[r] \ar[d]^N & F_{v,0} \otimes_{\Q_p} B \ar[r] \ar[d]^N & 0\\
      0 \ar[r] & \dst(X(-1)) \ar[r] & \dst(Z(-1)) \ar[r] & F_{v,0} \otimes_{\Q_p} B \ar[r] & 0,
    } 
  \end{align}
  we obtain the exact sequence
  \begin{align}
    \xymatrix{
      0\ar[r] & \dcris(X) \ar[r]^{\iota} & \dcris(Z) \ar[r] & F_{v,0} \otimes_{\Q_p} B \ar[r] & \dst(X(-1))/N\dst(X).
    }    
  \end{align}
  This exact sequence yields the two exact sequences
  \begin{align}
    0 \rarrow \dcris(X) \overset{\iota}{\rarrow} & \dcris(Z) \rarrow \Coker\iota \rarrow 0 ,\label{eq:first_seq} \\
    0 \rarrow \Coker\iota \rarrow F_{v,0} \otimes_{\Q_p} &B \rarrow \dst(X(-1))/N\dst(X). \label{eq:second_seq}
  \end{align}
  On the other hand, the assumption that $X$ is semistable and \eqref{eq:cris_v} imply that
  \begin{align}\label{eq:vanishing_quotient}
    (\dst(X(-1))/N\dst(X))^{\varphi=1} &= \left(\Hom_{F_{v,0} \otimes_{\Q_{p}} E}(\dst(X^*(1))^{N=0},\dst(E)) \right)^{\varphi=1}\\
    &=\left(\Hom_{F_{v,0} \otimes_{\Q_{p}} E}(\dcris(X^*(1)),\dst(E)) \right)^{\varphi=1}\\
    &=0.
  \end{align}
  Hence we see that $\varphi-1:\dst(X(-1))/N\dst(X) \to \dst(X(-1))/N\dst(X)$ is an $(E \otimes_{\Q_{p}} \Q_{p})$-linear isomorphism.
  Applying the snake lemma to the commutative diagram 
  \begin{align}
    \xymatrix{
      0 \ar[r] & \Coker\iota \ar[r] \ar[d]^{\varphi-1} & F_{v,0} \otimes_{\Q_p} B \ar[r] \ar[d]^{\varphi-1} & \dst(X(-1))/N\dst(X) \ar[d]^{\varphi-1}_{\cong} \\
      0 \ar[r] & \Coker\iota \ar[r] & F_{v,0} \otimes_{\Q_p} B \ar[r] & \dst(X(-1))/N\dst(X)
    }   
  \end{align}
  obtained by \eqref{eq:second_seq}, we have 
  \begin{align}
    (\Coker\iota)^{\varphi=1}\cong (F_{v,0} \otimes_{\Q_p} B )^{\varphi=1}\cong F_{v,0}^{\varphi=1}\otimes_{\Q_p} B\cong \Q_p\otimes_{\Q_p}B\cong B.
  \end{align}
  Since $\varphi-1:\dcris(X) \to \dcris(X)$ is also an $E$-linear isomorphism by \eqref{eq:cris_v}, it follows that $\dcris(Z)^{\varphi=1} \cong (\Coker\iota)^{\varphi=1}$ from the same argument for \eqref{eq:first_seq} as above.
  By combining these isomorphisms, we obtain an $E$-linear isomorphism $\pi':\dcris(Z)^{\varphi=1} \overset{\sim}{\rightarrow} B$. 
  We define $s_v:F_{v,0} \otimes_{\Q_p} B\to \dst(Z)$ by
  \begin{align}
  s_v(a \otimes b) \coloneq a \pi'^{-1}(b)
  \end{align}
  for $a \otimes b \in F_{v,0} \otimes_{\Q_p} B$. 
\end{proof}

Applying Proposition \ref{Prop:phi-splittings} to $X=V^*(1)_v$ and $Z=\cal{E}_2^*(1)_v$, we obtain the $(\varphi,N)$-equivariant $(F_{v, 0} \otimes_{\Q_{p}} E)$-linear splitting $s_v:F_{v, 0} \otimes_{\Q_p} B^{*} \to \dst(\cal{E}_2^*(1)_v)$ of the exact sequence
\begin{align}\label{seq:splitting_dst}
  \xymatrix{
  0\ar[r] &\dst(V^*(1)_v) \ar[r] & \dst(\cal{E}_2^*(1)_v) \ar[r] & F_{v, 0} \otimes_{\Q_p} B^{*} \ar[r] & 0.
  }
\end{align}
Tensoring \eqref{seq:splitting_dst} with $F_v$, we have a splitting $\bar{s}_v: F_{v} \otimes_{\Q_p} B^{*} \to \ddr(\cal{E}_2^*(1)_v)$ and its corresponding splitting $\bar{s}_v':D_{\dR}(\cal{E}_2^*(1)_v) \to D_{\dR}(V^*(1)_v)$ of the exact sequence
\begin{align}
  \xymatrix{
    0\ar[r] & \ddr(V^*(1)_v) \ar[r] & \ddr(\cal{E}_2^*(1)_v) \ar[r] & F_{v} \otimes_{\Q_p} B^{*} \ar[r] & 0.
  }  
\end{align}
Moreover, taking the functor $\mathscr{D}$, we obtain the splitting $\mathscr{D}(\bar{s}_v'):\ddr(V_v)\to \ddr(\cal{E}_{2,v})$ of the exact sequence
\begin{align}
  \xymatrix{
    0 \ar[r] & F_{v} \otimes_{\Q_p} B \ar[r] & \ddr(\cal{E}_{2,v}) \ar[r] & \ddr(V_v) \ar[r] & 0.
  }
\end{align}
Define an $E$-linear splitting $u_{v}$ by 
\begin{align}
  u_v \coloneq \varpi \circ \mathscr{D}(\bar{s}_v')_L \circ N_v:\ddr(V_v)_L/\ddr^+(V_v)_L\to \ddr(\cal{E}_{2,v})_L/\ddr^+(\cal{E}_{2,v})_L
\end{align}
in the diagram (whose second and third rows are commutative)
\begin{align}
  \xymatrix{
    & &\ddr(\cal{E}_{2,v})_L\ar@{->>}[d]^{\varpi}& \ddr(V_v)_L\ar[l]^{\mathscr{D}(\bar{s}_v')_L}&\\
    0 \ar[r] &  (F_{v} \otimes_{\Q_{p}} B)_{L} \ar[r] \ar[d]^{\exp_{B(1)_v}^{\mathrm{BK}}}_{\cong} & \ddr(\cal{E}_{2,v})_L/\ddr^+(\cal{E}_{2,v})_L \ar[r] \ar[d]^{\exp_{\cal{E}_{2,v}}^{\mathrm{BK}}}_{\cong} & \ddr(V_v)_L/\ddr^+(V_v)_L \ar[r] \ar[u]^{N_v} \ar[d]^{\exp_{V_v}^{\mathrm{BK}}}_{\cong} & 0 \\
    0 \ar[r] & H^1_{f}(F_v,B(1))_L \ar[r] & H^1_{f}(F_v, \cal{E}_{2,v})_L \ar[r] & H^1_{f}(F_v,V_v)_L \ar[r] & 0,
  }  
\end{align}
where $\varpi:\ddr(\cal{E}_{2,v})_L\to \ddr(\cal{E}_{2,v})_L/\ddr^+(\cal{E}_{2,v})_L$ is the canonical surjective map.
Since we have $H^1_{\st}(F_v,V_v) = H^1_{f}(F_v,V_v)$ by the hypothesis \textbf{(Hyp 3)}, the composite of $u_{v}$ and the inclusion $H^{1}_{f}(F_{v}, \cal{E}_{2,v})_{L} \to H^{1}_{\st}(F_{v}, \cal{E}_{2, v})_{L}$ also becomes a splitting of 
\begin{align}
  \xymatrix{
    0 \ar[r] & H^1_{\st}(F_v,B(1))_L \ar[r] & H^1_{\st}(F_v, \cal{E}_{2,v})_L \ar[r] & H^1_{\st}(F_v,V_v)_L \ar[r] & 0. 
  } \label{eq:exact_H1st}
\end{align}
Let $w_v:H^1_{\st}(F_v, \cal{E}_{2,v})_L \to H^1_{\st}(F_v,B(1))_L$ be the splitting of the exact sequence \eqref{eq:exact_H1st} corresponding to $u_{v}$.
Note that
\begin{align}
  [\cal{E}_v] \in \Ext^1_{F_{v}, \st}(A, \cal{E}_{2, v}) \otimes L = \Hom(A,H^1_{\st}(F_{v}, \cal{E}_{2, v})) \otimes L 
\end{align}
and that
\begin{align}
  w_v \circ [\cal{E}_v] & \in \Hom(A, H^1_{\st}(F_v,B(1))) \otimes L \\
  &=\Hom(A,B) \otimes H^1_{\st}(F_v, E(1))_L
  =\Hom(A,B) \otimes (F_{v}^{\times} \widehat{\otimes}_{\Z_{p}} L).
\end{align}

\begin{dfn}[{\cite[7.4]{Nek93}}]
  We define the \textit{local $p$-adic height pairing $h_{v, \cal{E}, N_v} \in \Hom(A,B)_L$ at $v\mid p$} attached to a mixed extension $[\cal{E}] \in \MExt(e_1, e_2)$ and a splitting $N_v$ by
  \begin{align}
    h_{v, \cal{E}, N_v} \coloneq - (1\otimes l_{F, v})(w_v \circ [\cal{E}_v]) \in \Hom(A,B)_L.
  \end{align}
\end{dfn}

\subsection{Global $p$-adic height pairing}\label{sec:global_height}

To define global $p$-adic height pairings, we apply the arguments in \S\ref{def_local_height_l-adic}, \S\ref{def_local_height_p-adic} to $A=H^1_{f}(F,V)$ and $B=H^1_{f}(F,V^*(1))^*$. 
Then, we can identify $\Ext^1_{F,\st}(A,V)\cong \Hom(A,A)$ and $\Ext^1_{F,\st}(V,B(1))\cong \Ext^1_{F,\st}(B^*,V^*(1))\cong \Hom(B^*,B^*)$
by using \eqref{isom:Ext_Hom}. 
Let $e_1=[\cal{E}_1]\in \Ext^1_{F,\st}(A,V)$ be the extension corresponding to the identity map $\mathrm{id}_A$, and let $e_2=[\cal{E}_2]\in \Ext^1_{F,\st}(V,B(1))$ be the extension class such that its dual twist
\begin{align}
    e_2^*(1)=[E_2^*(1)]\in \Ext^1_{F,\st}(B^*,V^*(1))
\end{align}
corresponds to $\mathrm{id}_{B^*}$.
We call $e_1=[\cal{E}_1]$ and $e_2=[\cal{E}_2]$ the \textit{universal extensions} in $\Ext^1_{F,\st}(A,V)$ and $\Ext^1_{F,\st}(V,B(1))$, respectively. 

\begin{dfn}[{\cite[Theorem 4.11]{Nek93}}]\label{dfn:global-height}
  The \textit{(global) $p$-adic height pairing} 
  \begin{align}
    \langle\ ,\ \rangle_{l_F,N}:H^1_{f}(F,V^*(1))\times H^1_{f}(F,V)\to L
  \end{align}
  attached to $l_F$ and $N \coloneq \{N_v\}_{v\mid p}$ is defined by
  \begin{align}\label{def:global_sum_local}
    \langle d, a \rangle_{l_F, N} \coloneq \sum_{v \nmid p}(d \circ h_{v,\cal{E}})(a)+\sum_{v\mid p}(d\circ h_{v,\cal{E},N_v})(a)
  \end{align}
  for $d\in H^1_{f}(F,V^*(1))(=B^*)$ and $a\in H^1_{f}(F,V)$.
\end{dfn}

\begin{rem}
  We can check that the summation in the right hand side of \eqref{def:global_sum_local} converges, and hence \eqref{def:global_sum_local} is well-defined. 
  Indeed, for every mixed extension $[\cal{E}] \in \MExt(e_1, e_2)$ and every mixed extension $[\cal{E}'] \in \MExt_{\st}(e_1, e_2)$, there exists $x = [X] \in \Ext^1_{F}(A, B(1))$ such that $\Psi(x, [\cal{E}']) = [\cal{E}]$ by Proposition \ref{prop:principal_mixed_extension}. 
  By Proposition \ref{prop:vanishing_essentially_unramified}, we see that
  \begin{align}
    \sum_{v} h_{v, \cal{E}} &= \sum_{v} h_{v,\Psi(x,[\cal{E}'])}=\sum_{v} (h_{v, \cal{E}'} - (1 \otimes l_{F, v}) \circ [X_v])\\
    &= \left( \sum_{v} h_{v, \cal{E}'} \right) - (1 \otimes l_F)([X]) = \sum_{v \mid p} h_{v, \cal{E}'}.
  \end{align}
  The second equality follows from \cite[Proposition 4.10 (1)]{Nek93}.
  In particular, the right hand side of \eqref{def:global_sum_local} does not depend on the choice of a mixed extension $[\cal{E}]$ and hence we omit $\cal{E}$ in the notation of the left hand side.
\end{rem}

\section{Proof of main results}
\subsection{Difference arising from splittings}\label{sec:differ_splitting}
We use the same notation as in \S2. 
We calculate the difference of $p$-adic height pairings arising from two splittings to prove Theorem \ref{main_theorem1}. 

For every place $v$ above $p$, let $\mu_v:H^1_{\st}(F_v, \cal{E}_{2,v})_L \to H^1_{\st}(F_v, V_v)_L$ be the canonical surjection. 
Then we have $w_v = 1 - u_v \circ \mu_v$. 

From this section, take two splittings $N=\{ N_v \}_{v\mid p}$ and $N'=\{N_v'\}_{v\mid p}$ as ($F_{v} \otimes_{\Q_{p}} E$)-linear. 
Then, we see that
\begin{align}
  &\langle d, a \rangle_{l_F, N} - \langle d, a \rangle_{l_F, N'} \\ 
  &= \sum_{v \mid p} \{ (d \circ h_{v, \cal{E}, N_v})(a) - (d \circ h_{v, \cal{E}, N_v'})(a) \} \notag \\
  &=  \sum_{v \mid p} \left( (1 \otimes l_{F, v})  \circ (d \otimes 1)\circ (w_v'-w_v)\circ [\cal{E}_{v}] \right)(a)\notag \\
  &= \sum_{v \mid p} \left( (1 \otimes (l_{F, v} \circ \exp_{E(1)}^{\BK}) ) \circ (d\otimes 1)\circ (u_v-u_v') \circ \mu_v \circ [\cal{E}_{v}] \right)(a). \label{eq:difference_height_arising_from_splittings}
\end{align}

\begin{lem}\label{indepent_mixed_extension}
  The map $\mu_v\circ [\cal{E}_{v}]$ does not depend on the choice of $[\cal{E}] \in \MExt(e_1, e_2)$. 
  Moreover, $\mu_v\circ [\cal{E}_{v}](a) = a_v$.
\end{lem}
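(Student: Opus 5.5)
Both claims reduce to naturality of connecting maps, as follows.

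First I will identify $\mu_v\circ[\cal{E}_v]$ with a connecting map. Under \eqref{isom:Ext_Hom}, $[\cal{E}_v]\in\Ext^1_{F_v,\st}(A,\cal{E}_{2,v})$ corresponds to the connecting homomorphism $\delta_{\cal{E}}:A\to H^1(F_v,\cal{E}_{2,v})$ of the short exact sequence $0\to\cal{E}_{2,v}\xrightarrow{f_1}\cal{E}_v\xrightarrow{f_3}A\to0$. Here $A$ is a trivial representation, so $A=H^0(F_v,A)$. The map $\mu_v$ is induced by the $G_{F_v}$-equivariant projection $\cal{E}_{2,v}\to V_v$.

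Next I use the commutative diagram of Figure~\ref{def_mixed_extension}. It gives a morphism of short exact sequences from $0\to\cal{E}_{2,v}\to\cal{E}_v\to A\to0$ to $0\to V_v\to\cal{E}_{1,v}\to A\to0$. The vertical maps are the projection $\cal{E}_{2,v}\to V_v$, the map $f_4$, and the identity on $A$. Naturality of connecting homomorphisms in group cohomology, checked directly on cocycles, then gives
\begin{align}
  \mu_v\circ\delta_{\cal{E}}=\delta_{\cal{E}_1}:A\to H^1(F_v,V_v).
\end{align}
Concretely, for $x\in A$ choose a lift $\tilde x\in\cal{E}_v$. The cocycle $\sigma\mapsto\sigma\tilde x-\tilde x$ takes values in $\cal{E}_{2,v}$, and its image in $V_v$ is $\sigma f_4(\tilde x)-f_4(\tilde x)$, where $f_4(\tilde x)$ is a lift of $x$ to $\cal{E}_{1,v}$. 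The right-hand side involves only $e_1$, so $\mu_v\circ[\cal{E}_v]$ is independent of the mixed extension. The same holds after tensoring with $L$.

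Finally, $e_1$ is the universal extension corresponding to $\mathrm{id}_A$ under $\Ext^1_{F,\st}(A,V)\cong\Hom(A,H^1_{\st}(F,V))$. Hence the global connecting map $A\to H^1(F,V)$ is the inclusion $a\mapsto a$. Connecting maps commute with restriction from $G_F$ to $G_{F_v}$, so the local connecting map $\delta_{\cal{E}_1}$ sends $a$ to $a_v$. Since $a$ lies in $H^1_f(F,V)=H^1_{\st}(F,V)$, the image $a_v$ lies in $H^1_{\st}(F_v,V_v)$, which makes the target space consistent.

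The only delicate point is bookkeeping: confirming that the identification in \eqref{isom:Ext_Hom} uses the same sign and normalization of the connecting map globally and locally, and that $\mu_v$ really is the map induced by $\cal{E}_{2,v}\to V_v$. Neither step needs any further input.
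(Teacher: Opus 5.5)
Your proof is correct and is essentially the paper's argument. The paper observes that the pushout of $[\mathcal{E}_v]$ along $\mathcal{E}_{2,v}\to V_v$ is $[\mathcal{E}_{1,v}]$, which is your morphism of short exact sequences combined with naturality of connecting maps, and then uses that the universal extension $[\mathcal{E}_{1,v}]$ corresponds to localization; your cocycle check just makes explicit the step the paper attributes to the definition of a mixed extension.
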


\begin{proof}
  It suffices to show that $\mu_{v} \circ [\cal{E}_{v}] = [\cal{E}_{1, v}]$ for any $[\cal{E}] \in \MExt(e_{1}, e_{2})$.
  By the definition of the mixed extension, the pushout of a mixed extension $[\cal{E}_{v}] \in \Ext^{1}_{F_v}(A, \cal{E}_{2, v})$ by the canonical surjection $\cal{E}_{2,v} \to V_v$ is equal to $[\cal{E}_{1, v}] \in \Ext_{F_v}^{1}(A, V_{v})$.
  The second assertion follows from the fact that the class $[\cal{E}_{1,v}]$ corresponds to the localization map.
\end{proof}

Let $[W] \in \Ext^1_{F, \st}(V, E(1))$ be the extension such that $[W^{*}(1)]\in \Ext^1_{F, \mathrm{st}}(E, V^*(1)) \cong B^*$ corresponds to $d$. 
The extension $W$ fits into the commutative diagram
\begin{align}
  \xymatrix{
    0 \ar[r] & B(1) \ar[r] \ar[d]_{d\otimes 1} & \cal{E}_2 \ar[r] \ar[d] & V \ar[r] \ar@{=}[d] & 0 \\
    0 \ar[r] & E(1) \ar[r] & W \ar[r] & V \ar[r] & 0.
  }  
\end{align}
Applying Proposition \ref{Prop:phi-splittings} in the same way as $u_v$, we can obtain a ($F_{v} \otimes_{\Q_{p}} E$)-linear splitting $r_v:\ddr(V_v)_L/\ddr^+(V_v)_L\to \ddr(W_v)_L/\ddr^+(W_v)_L$ of the exact sequence
\begin{align}
  \xymatrix{
    0 \ar[r] & (F_{v} \otimes_{\Q_{p}} E)_{L} \ar[r] & \ddr(W_v)_L/\ddr^+(W_v)_L\ar[r] & \ddr(V_v)_L/\ddr^+(V_v)_L\ar[r] &0
  }  
\end{align}
for every $v\mid p$. 
(Note that $r_{v}$ also depends on the choice of a splitting $N_{v}$.)

\begin{lem}\label{lem:difference_via_splittings}
  We have
  \begin{align}
    (d\otimes 1)\circ (u_v-u_v')=r_v-r_v'.
  \end{align}
\end{lem}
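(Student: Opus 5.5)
The plan is to prove a stronger, pointwise statement: $(d\otimes 1)\circ u_v = r_v$ for each fixed splitting $N_v$. Here $(d\otimes 1)$ denotes the map on tangent spaces $\ddr(\cal{E}_{2,v})_L/\ddr^+(\cal{E}_{2,v})_L \to \ddr(W_v)_L/\ddr^+(W_v)_L$ induced by the morphism $\cal{E}_2\to W$ of the displayed diagram. The lemma then follows by subtracting the same identity for $N_v'$. Since $u_v = \varpi\circ\mathscr{D}(\bar{s}_v')_L\circ N_v$ and $r_v$ is built in exactly the same way from the splitting attached to $W$, the claim reduces to the commutativity of two squares. The first is the square relating $\varpi$ for $\cal{E}_{2,v}$ and for $W_v$, which commutes by functoriality of $\ddr$ and of its filtration. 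The second is the identity
\begin{align}
  \ddr(g)\circ\mathscr{D}(\bar{s}_{v}')=\mathscr{D}(\bar{t}_{v}'),
\end{align}
where $g:\cal{E}_{2}\to W$ is the given morphism and $\bar{t}_v'$ is the retraction for $W$ produced by Proposition \ref{Prop:phi-splittings} applied to $X=V^*(1)_v$ and $Z=W^*(1)_v$.

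To prove this identity, I dualize. Dualizing $g$ gives $g^*(1):W^*(1)\to\cal{E}_2^*(1)$, which is the identity on $V^*(1)$ and induces $d^*:E\to B^*$ on the quotients (the transpose of $d\otimes 1$). The key step is to show that the canonical section $s_v$ of Proposition \ref{Prop:phi-splittings} is natural in $Z$: for a morphism of extensions of $F_{v,0}\otimes B$ by $\dst(X)$ that is the identity on $X$, the sections are compatible, that is,
\begin{align}
  \dst(g^*(1))\circ s_v^{W}=s_v^{\cal{E}_2}\circ(1\otimes d^*).
\end{align}
This follows from the construction. The section $s_v$ is characterized as the unique $F_{v,0}\otimes E$-linear section whose image on $1\otimes B$ lies in $\dcris(Z)^{\varphi=1}$; uniqueness holds because two such sections differ by a map into $\dcris(X)^{\varphi=1}=0$. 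The map $\dst(g^*(1))$ preserves $\dcris(-)^{\varphi=1}$, so both sides of the displayed equation are sections with image in $\dcris(\cal{E}_2^*(1)_v)^{\varphi=1}$ lying over $1\otimes d^*$, and their difference lands in $\dcris(V^*(1)_v)^{\varphi=1}=0$. Tensoring with $F_v$ then gives the same compatibility for $\bar{s}_v$. Passing to the complementary retractions $\bar{s}_v'$, which are identified via the identity on $\ddr(V^*(1)_v)$, gives $\bar{s}_v'\circ\ddr(g^*(1))=\bar{t}_v'$. Finally, applying the contravariant functor $\mathscr{D}$, using $\mathscr{D}(\ddr(g^*(1)))=\ddr(g)$ under the canonical identification $\mathscr{D}(\ddr(Y^*(1)))\cong\ddr(Y)$, yields the required identity.

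I expect the main obstacle to be bookkeeping rather than anything conceptual. The identifications $\mathscr{D}\circ\ddr(Y^*(1))\cong\ddr(Y)$ must be compatible with the morphism $g$, and $d\otimes 1$ must agree with the pushout map on $H^1_{\st}$ after transport through $\exp^{\BK}$, since $\exp^{\BK}$ is functorial and commutes with pushout along $B(1)\to E(1)$. I would check both compatibilities by writing out the duality pairing explicitly on $\ddr$. Once $(d\otimes1)\circ u_v=r_v$ is established, linearity in the choice of splitting gives $(d\otimes 1)\circ(u_v-u_v')=r_v-r_v'$.
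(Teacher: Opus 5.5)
Your proposal is correct and follows essentially the paper's route. The paper's proof just asserts that $u_v$ and $r_v$ fit into the commutative diagram \eqref{diag:before}, which is your pointwise identity that the middle vertical map carries $u_v$ to $r_v$, and then subtracts; you additionally supply the step the paper leaves implicit. That step is the naturality of the canonical splitting of Proposition \ref{Prop:phi-splittings}, which you derive from its characterization by $\dcris(Z)^{\varphi=1}$ and the vanishing $\dcris(V^*(1)_v)^{\varphi=1}=0$; your reading of $d\otimes 1$ as the middle vertical map is harmless, since $u_v-u_v'$ lands in $(F_v\otimes_{\Q_p}B)_L$, where the left square of \eqref{diag:before} commutes.
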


\begin{proof}
The  splittings 
\begin{align}
  &u_v:\ddr(V_v)_L/\ddr^+(V_v)_L\to \ddr(\cal{E}_{2,v})_L/\ddr^+(\cal{E}_{2,v})_L,\\ &r_v:\ddr(V_v)_L/\ddr^+(V_v)_L\to \ddr(W_v)_L/\ddr^+(W_v)_L
\end{align}
fit into the commutative diagram
\begin{align}
  \label{diag:before}
  \xymatrix{
    0 \ar[r] & (F_{v} \otimes_{\Q_{p}} B)_{L} \ar[r] \ar[d]^{d\otimes 1} & \ddr(\cal{E}_{2,v})_L/\ddr^+(\cal{E}_{2,v})_L \ar[d] \ar[r] & \ddr(V_v)_L/\ddr^+(V_v)_L \ar[r] \ar@{=}[d] & 0\\
    0 \ar[r] & (F_{v} \otimes_{\Q_{p}} E)_{L} \ar[r] & \ddr(W_v)_L/\ddr^+(W_v)_L \ar[r] & \ddr(V_v)_L/\ddr^+(V_v)_L \ar[r] & 0.
  }  
\end{align}
The lemma follows from \eqref{diag:before} and chasing the diagram.
\end{proof}

Applying the functor $\mathscr{D}$ to the splittings $u_v$ and $r_v$ in \eqref{diag:before}, we also have the splittings 
\begin{align}\label{oshiruko1}
  \mathscr{D}(u_v):\ddr^+(\cal{E}^*_2(1)_v)_L\to \ddr^+(V^*(1)_v)_L, \quad
  \mathscr{D}(r_v):\ddr^+(W^*(1)_v)_L\to \ddr^+(V^*(1)_v)_L
\end{align}
in the commutative diagram
\begin{align}\label{oshiruko2}  
  \xymatrix{
    0 \ar[r] & \ddr^+(V^*(1)_v)_L \ar[r] \ar@{=}[d] & \ddr^+(W^*(1)_v)_L \ar[d] \ar[r] & (F_{v} \otimes_{\Q_{p}} E)_{L} \ar[r] \ar[d] & 0 \\
    0 \ar[r] & \ddr^+(V^*(1)_v)_L \ar[r] & \ddr^+(\cal{E}_2^*(1)_v)_L \ar[r] & (F_{v} \otimes_{\Q_{p}} B^{*})_{L} \ar[r] & 0.
  }  
\end{align}
Before the next lemma, we introduce the set of \textit{rigidified extensions} (cf.\cite{Nek93}). 
We consider an extension $[X^*(1)_v] \in \Ext_{F_{v}, \st}^{1}(E, V^{*}(1)_v)$ and a left splitting $w_v$ of the exact sequence
\begin{align}
  \xymatrix{
    0 \ar[r] & \ddr^+(V^*(1)_v) \ar[r] & \ddr^+(X^*(1)_v) \ar[r] & (F_v \otimes_{\Q_{p}} E) \ar[r] & 0.
  } \label{eq:splitting of extrig}
\end{align}
We call a tuple of these data a \textit{rigidified extension}. 
We denote the set of isomorphism classes $([X^*(1)_v],w_v)$ of rigidified extensions by $\Extrig^1_{F_v,\st}(E, V^*(1)_v)$. 
(Two rigidified extensions are isomorphic if there exists a morphism of extensions which is compatible with these splittings.) 
The set $\Extrig^1_{F_v,\st}(E, V^*(1)_v)$ becomes an $E$-vector space as follows. 
We define the sum of two rigidified extensions $([X_1^*(1)_v],w_{1,v}), ([X_2^*(1)_v],w_{2,v})\in \Extrig^1_{F_v,\st}(E, V^*(1)_v)$ by
\begin{align}
    ([X_1^*(1)_v],w_{1,v})+([X_2^*(1)_v],w_{2,v}):=([X_1^*(1)_v]\wedge [X_2^*(1)_v],w_{1,v}\wedge w_{2,v}),
\end{align}
where
\begin{align}
(w_{1,v}\wedge w_{2,v})([(x_1,x_2)]):=w_{1,v}(x_1)+ w_{2,v}(x_2)
\end{align}
for $[(x_1,x_2)]\in \ddr^+(X_1^*(1)_v\wedge X_2^*(1)_v)\cong \ddr^+(X_1^*(1)_v)\wedge \ddr^+(X_2^*(1)_v)$.
The scalar multiplication is defined by, for $a\in E$,
\begin{align}
  a \cdot ([X^*(1)_v],w_v) \coloneq \begin{cases}
      (a\cdot[X^*(1)_v],w_v)&(a\neq0),\\
      ([E\oplus V^*(1)_v],w_{v,0})&(a=0),
  \end{cases}
\end{align}
where 
\begin{align}
w_{v,0}:(F_v\otimes_{\Q_p}E)\oplus \ddr^+(V^*(1)_v)\to \ddr^+(V^*(1)_v)
\end{align}
is the second projection.
Let $([X^*(1)_v],w_v) \in \Extrig^1_{F_v,\st}(E, V^*(1)_v)$. 
From Proposition \ref{Prop:phi-splittings}, we obtain a canonical $(\varphi, N)$-equivariant splitting $s_v:F_{v,0} \otimes_{\Q_{p}} E \to \dst(X^*(1)_v)$ and extend it to $s_v:F_{v} \otimes_{\Q_{p}} E \to \ddr(X^*(1)_v)$ by tensoring with $F_v$. 
We define the well-defined $E$-linear map $\Phi:\Extrig^1_{F_v, \st}(E, V^*(1)_v) \to \ddr(V^*(1)_v)$ by
\begin{align}
  \Phi([X^*(1)_v], w_v) \coloneq w_v'(1) - s_v(1) \in \ddr(V^*(1)_v),
\end{align}
where $w_v':(F_v \otimes_{\Q_{p}} E) \to \ddr^+(X^{*}(1)_v)$ is the corresponding splitting to $w_v$.

The following lemma, which is essential for the computation of the Bloch–Kato logarithm map, is the semistable analogue of \cite[Lemma 2.7]{Nek93}, but can be proved by the same argument as in the crystalline case. 

\begin{lem}[cf.{\cite[Lemma 2.7]{Nek93}}]\label{prop:pairing_bklog}
  The map $\Phi$ is an isomorphism, and the following diagram is commutative:
  \begin{align}\label{diag:BKlog_dR}
    \xymatrix{
      \Extrig^1_{F_v,\st}(E, V^*(1)_v) \ar^{\pi}[r] \ar[d]_{\Phi} & \Ext^1_{F_v,\st}(E, V^*(1)_v) \ar[d]^{(\exp_{V^*(1)_v}^{\mathrm{BK}})^{-1}} \\
      \ddr(V^*(1)_v) \ar@{->>}[r] & \ddr(V^*(1)_v)/\ddr^+(V^*(1)_v),
    }
  \end{align}
  where $\pi$ is the projection to the first component. 
\end{lem}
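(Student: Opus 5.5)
The plan is to follow Nekov\'a\v{r}'s argument for \cite[Lemma 2.7]{Nek93}. Everything is reduced to an explicit cocycle computation with the fundamental exact sequence, after first reducing the semistable case to a crystalline statement.

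\emph{Step 1: reduction to $\varphi$-invariants.} Let $([X^*(1)_v],w_v)$ be a rigidified extension, and write $s_v(1)$ for the value of the canonical splitting from Proposition \ref{Prop:phi-splittings}. Since $s_v$ is $(\varphi,N)$-equivariant and $N=0$, $\varphi=1$ on $1\in F_{v,0}\otimes_{\Q_p}E$, we get
\begin{align}
  s_v(1)\in \dst(X^*(1)_v)^{N=0,\varphi=1}=\dcris(X^*(1)_v)^{\varphi=1}.
\end{align}
So only crystalline periods appear, exactly as in the crystalline case.

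\emph{Step 2: $\Phi$ is well defined and $E$-linear.} Both $w_v'(1)$ and $s_v(1)$ map to $1$ in $F_v\otimes_{\Q_p}E$, so their difference lies in $\ddr(V^*(1)_v)$. This difference is unchanged under isomorphisms of rigidified extensions, because $s_v$ is canonical and hence functorial. For additivity, note that the canonical splitting of a Baer sum is the sum of the canonical splittings, by uniqueness of $\varphi$-invariant lifts in Proposition \ref{Prop:phi-splittings}. Combined with the definition of $w_{1,v}\wedge w_{2,v}$, this gives additivity of $\Phi$. Compatibility with scalar multiplication is checked in the same way.

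\emph{Step 3: commutativity of \eqref{diag:BKlog_dR}.} Choose $e\in X^*(1)_v$ lifting $1\in E$. Set
\begin{align}
  y\coloneq s_v(1)-1\otimes e\in (B_{\cris}^{\varphi=1}\otimes V^*(1)_v), \qquad z\coloneq w_v'(1)-1\otimes e\in B_{\dR}^+\otimes V^*(1)_v .
\end{align}
Here $y$ lies in $B_{\cris}^{\varphi=1}\otimes V^*(1)_v$ because its image in $B_{\cris}\otimes E$ is $0$, and $z$ lies in $B_{\dR}^+\otimes V^*(1)_v$ because $w_v'(1)\in\ddr^+(X^*(1)_v)$ also maps to $1$. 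The class $\pi([X^*(1)_v],w_v)$ is represented by the cocycle $\sigma\mapsto\sigma e-e$. By $G_{F_v}$-invariance of $s_v(1)$, this cocycle equals $-(\sigma-1)y=(\sigma-1)(-y)$.

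Now $\Phi=z-y$, and modulo $B_{\dR}^+\otimes V^*(1)_v$ this is congruent to $-y$. So $-y\in B_{\cris}^{\varphi=1}\otimes V^*(1)_v$ is a lift of the image of $\Phi$ in $(B_{\dR}/B_{\dR}^+)\otimes V^*(1)_v$. By the construction of $\exp^{\BK}$ through the connecting map of the fundamental exact sequence, $\exp^{\BK}_{V^*(1)_v}(\Phi \bmod \ddr^+)$ is the class of $\sigma\mapsto(\sigma-1)(-y)$, which is the class of $\pi([X^*(1)_v],w_v)$. The main obstacle I expect is fixing the sign convention for the connecting homomorphism so that no stray sign appears here. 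If a sign does appear, it has to be absorbed consistently into the convention used in the definition of $\bklog$.

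\emph{Step 4: $\Phi$ is an isomorphism.} The map $\pi$ is surjective, since splittings of \eqref{eq:splitting of extrig} always exist. Its fibres are torsors under $\ddr^+(V^*(1)_v)$: changing $w_v'$ by $\delta\in\ddr^+(V^*(1)_v)$ changes $\Phi$ by exactly $\delta$. This gives a map of short exact sequences
\begin{align}
  0\to\ddr^+(V^*(1)_v)\to\Extrig^1_{F_v,\st}(E,V^*(1)_v)\to\Ext^1_{F_v,\st}(E,V^*(1)_v)\to0
\end{align}
to
\begin{align}
  0\to\ddr^+(V^*(1)_v)\to\ddr(V^*(1)_v)\to\ddr(V^*(1)_v)/\ddr^+(V^*(1)_v)\to0 .
\end{align}
The left vertical map is the identity, and the right vertical map is $(\exp^{\BK}_{V^*(1)_v})^{-1}$, which is an isomorphism by \eqref{eq:cris_v} and $H^1_{\st}=H^1_f=H^1_e$. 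The five lemma then shows that $\Phi$ is an isomorphism.
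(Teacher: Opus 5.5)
Your proposal is correct and is the argument the paper intends: the paper gives no separate proof and simply defers to Nekov\'a\v{r}'s crystalline argument for \cite[Lemma 2.7]{Nek93}. Your Step 1 (the canonical splitting lands in $\dcris(X^*(1)_v)^{\varphi=1}$) is exactly what lets that argument carry over verbatim to the semistable case, and Steps 3--4 are Nekov\'a\v{r}'s cocycle computation with the fundamental exact sequence followed by the five-lemma comparison. No stray sign can arise in Step 3, since both the identification $\Ext^1\cong H^1$ and $\exp^{\BK}$ are connecting maps with the same convention $\sigma\mapsto(\sigma-1)\tilde{x}$. Your torsor description of the fibres of $\pi$ implicitly uses that the splittings are $(F_v\otimes_{\Q_p}E)$-linear and that $H^0(F_v,V^*(1)_v)=0$; the latter follows from \eqref{eq:cris_v}.
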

We return to our setting. 
We apply the above argument to $([W^*(1)_v], w_{v}) \in \Extrig^1_{F_v,\st}(E, V^*(1)_v)$, where $w_{v}$ is any left splitting of \eqref{eq:splitting of extrig}, and put $x_v \coloneq w_{v}'(1)$ and $x_v^{\varphi} \coloneq s_v(1)$. (Hence, 
\begin{align}
    \Phi: \Extrig^1_{F_v,\st}(E, V^*(1)_v) \to \ddr(V^*(1)_v)
\end{align}
sends $([W^*(1)_v], w_{v})$ to $x_v-x_v^{\varphi}$.)

\begin{lem}\label{lemma:deRhampairing_bklog}
Let $v \mid p$ and $[\ ,\ ]_{\dR}: \ddr(V^*(1)_v) \times \ddr(V_v) \to D_{\dR}(E(1)) = F_{v} \otimes_{\Q_{p}} E$ be the de Rham pairing induced by the natural pairing $V^*(1) \times V \to E(1)$. For any $\omega \in \ddr^+(V_v)_L$, we have
  \begin{align}
    [x_v-x_v^{\varphi},\omega]_{\dR}=\bklog_{V^*(1)_v}(d_v)(\omega),
  \end{align}
  where $\bklog_{V^*(1)_v}$ is defined in \eqref{definition:Bloch-Kato_log}. 
\end{lem}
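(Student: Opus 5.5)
The plan is to reduce the statement to Lemma \ref{prop:pairing_bklog} and then unwind the identification of the tangent space with $\Hom(\ddr^+(V_v),D_{\dR}(E(1)))$ in \eqref{definition:Bloch-Kato_log}.

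First I would identify $d_v$ with the class $[W^*(1)_v]\in\Ext^1_{F_v,\st}(E,V^*(1)_v)$. By construction, $[W^*(1)]$ corresponds to $d$ under \eqref{isom:Ext_Hom}. Localizing at $v$ sends it to the connecting image of $1$, which is $d_v$, by the same reasoning as in Lemma \ref{indepent_mixed_extension}.

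Next I would apply Lemma \ref{prop:pairing_bklog} to the rigidified extension $([W^*(1)_v],w_v)$. Commutativity of \eqref{diag:BKlog_dR} gives
\begin{align}
  (\exp^{\BK}_{V^*(1)_v})^{-1}(d_v) \equiv \Phi([W^*(1)_v],w_v) = x_v - x_v^{\varphi} \pmod{\ddr^+(V^*(1)_v)}.
\end{align}
The notable feature is that the choice of left splitting $w_v$ is irrelevant. Changing $w_v$ changes $x_v$ by an element of $\ddr^+(V^*(1)_v)$, which is consistent with the congruence.

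Then I would make the identification
\begin{align}
  \ddr(V^*(1)_v)/\ddr^+(V^*(1)_v)\cong \Hom_{F_v\otimes E}(\ddr^+(V_v),D_{\dR}(E(1)))
\end{align}
explicit. It is induced by the de Rham pairing: a class $[y]$ goes to $\omega\mapsto[y,\omega]_{\dR}$. This is well-defined because $\ddr^+(V^*(1)_v)$ and $\ddr^+(V_v)$ are mutually orthogonal. The orthogonality holds since $\Fil^0\otimes\Fil^0$ lands in $\Fil^0 D_{\dR}(E(1))=0$, the Hodge–Tate weight of $E(1)$ being such that $D_{\dR}^+(E(1))=0$. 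It is an isomorphism by the perfectness of the de Rham pairing, together with $\ddr^+(V^*(1)_v)=\ddr^+(V_v)^{\perp}$.

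Combining the last two steps, $\bklog_{V^*(1)_v}(d_v)(\omega)=[x_v-x_v^{\varphi},\omega]_{\dR}$ for $\omega\in\ddr^+(V_v)$. The statement for $\omega\in\ddr^+(V_v)_L$ then follows by $L$-linear extension.

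The main obstacle is the first step: confirming that the sign and normalization conventions for the identification $\Ext^1\cong H^1$ via connecting maps agree with those used in Lemma \ref{prop:pairing_bklog}. I would also check that the identification in \eqref{definition:Bloch-Kato_log} is indeed the one given by $[\ ,\ ]_{\dR}$ with $V^*(1)$ in the first slot, rather than its negative. If a sign discrepancy appears, I would trace it through Nekov\'a\v{r}'s Lemma 2.7 in the crystalline case and transport the same convention.
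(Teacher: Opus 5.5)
Your proposal is correct and follows essentially the same route as the paper: identify $\pi([W^*(1)_v],w_v)$ with $d_v$, use the commutativity of \eqref{diag:BKlog_dR} to get $(\exp^{\BK}_{V^*(1)_v})^{-1}(d_v)\equiv x_v-x_v^{\varphi}$ modulo $\ddr^+(V^*(1)_v)$, and read off $\bklog_{V^*(1)_v}(d_v)$ through the identification in \eqref{definition:Bloch-Kato_log}. The sign worry you raise does not arise, because the paper takes that identification to be the one induced by $[\ ,\ ]_{\dR}$ with $V^*(1)$ in the first slot, exactly as in your third step.
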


\begin{proof}
We identify
\begin{align}
 \Ext^1_{F_v, \st}(E, V^*(1)_v) \cong H^1_{f}(F_v, V^*(1)_v), \quad \ddr(V^*(1)_v)/\ddr^+(V^*(1)_v)\cong \Hom(\ddr^+(V_v), D_{\dR}(E(1)))
\end{align}
by \eqref{isom:Ext_Hom} and \eqref{definition:Bloch-Kato_log} respectively.
The composition of $\Phi$ and the bottom horizontal map in \eqref{diag:BKlog_dR} sends $([W^*(1)_v], w_{v})$ to the map $[x_v-x_v^{\varphi}, \ast]_{\dR} \in \Hom(\ddr^+(V_v), D_{\dR}(E(1)))$.
Since $\pi(([W^*(1)_v], w_{v}))=d_v \in H^1_{f}(F_v,V^*(1)_v)$, the lemma follows from the definition of $\bklog_{V^*(1)_v}$.
\end{proof}

\begin{prop}
  Assume that $D_{\dR}^{+}(V_{v})$ is a free $(F_{v} \otimes_{\Q_{p}} E)$-module of rank $g$.
  Let $\{\omega_{v,1},\ldots,\omega_{v,g}\}$ be an $(F_v \otimes_{\Q_{p}} E)$-basis of $\ddr^+(V_v)$. 
  Choose $\eta_{v,1},\ldots,\eta_{v,g} \in D_{\dR}(V^*(1)_v)$ whose images in $D_{\dR}(V^{*}(1)_{v})/D_{\dR}^{+}(V^{*}(1)_{v})$ form the dual basis under the de Rham pairing, that is $[\eta_{v,i},\omega_{v,j}]_{\dR}=\delta_{ij}$. 
  Here, $\delta_{ij}$ is the Kronecker delta. 
  Then we have
  \begin{align}\label{eq:difference_Log}
    \mathscr{D}(r_v)(x_v)-\mathscr{D}(r_v')(x_v)=\sum_{j=1}^g\bklog_{V^*(1)_v}(d_v)(\omega_{v,j})(\mathscr{D}(N_v)(\eta_{v,j})-\mathscr{D}(N'_v)(\eta_{v,j})).
  \end{align}
\end{prop}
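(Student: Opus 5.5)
The plan is to make $\mathscr{D}(r_v)$ explicit. Exactly as for $u_v$, the splitting $r_v$ is the composite $\varpi\circ\mathscr{D}(\bar{s}_v')_L\circ N_v$. Here $\bar{s}_v':\ddr(W^*(1)_v)\to\ddr(V^*(1)_v)$ is the left splitting coming from the canonical $(\varphi,N)$-splitting $s_v$ of Proposition \ref{Prop:phi-splittings}, applied to $Z=W^*(1)_v$.

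First I dualize this composite. The functor $\mathscr{D}$ sends $\ddr(V_v)/\ddr^+(V_v)$ to the annihilator of $\ddr^+(V_v)$, which is $\ddr^+(V^*(1)_v)$. Hence $\mathscr{D}(N_v):\ddr(V^*(1)_v)_L\to\ddr^+(V^*(1)_v)_L$ is a retraction onto $\ddr^+(V^*(1)_v)_L$. Next, $\mathscr{D}(\mathscr{D}(\bar{s}_v'))=\bar{s}_v'$ by biduality, and $\mathscr{D}(\varpi)$ is the inclusion $\ddr^+(W^*(1)_v)\hookrightarrow\ddr(W^*(1)_v)$. Therefore, for every $y\in\ddr^+(W^*(1)_v)_L$,
\begin{align}
  \mathscr{D}(r_v)(y)=\mathscr{D}(N_v)\bigl(\bar{s}_v'(y)\bigr),
\end{align}
and the same formula holds for $r_v'$ with $N_v'$ in place of $N_v$. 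Checking this dualization carefully, including the identifications of $\mathscr{D}$ applied to the sub- and quotient objects and the compatibility with the diagram \eqref{oshiruko2}, is the step I expect to need the most care.

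Second, I apply the formula to $y=x_v=w_v'(1)$. By construction $\bar{s}_v'$ is the identity on $\ddr(V^*(1)_v)$ and kills $s_v(1)=x_v^{\varphi}$. Both $x_v$ and $x_v^{\varphi}$ map to $1$ in $F_v\otimes_{\Q_p}E$, so $x_v-x_v^{\varphi}\in\ddr(V^*(1)_v)$. It follows that $\bar{s}_v'(x_v)=x_v-x_v^{\varphi}$, and hence
\begin{align}
  \mathscr{D}(r_v)(x_v)-\mathscr{D}(r_v')(x_v)=\bigl(\mathscr{D}(N_v)-\mathscr{D}(N_v')\bigr)(x_v-x_v^{\varphi}).
\end{align}

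Third, $\mathscr{D}(N_v)$ and $\mathscr{D}(N_v')$ are both retractions onto $\ddr^+(V^*(1)_v)_L$, so their difference vanishes on $\ddr^+(V^*(1)_v)_L$. The right-hand side therefore depends only on the class of $x_v-x_v^{\varphi}$ modulo $\ddr^+(V^*(1)_v)$.

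Fourth, I expand that class in the basis $\{\eta_{v,j}\}$. Since the images of the $\eta_{v,j}$ are dual to $\{\omega_{v,j}\}$ under $[\ ,\ ]_{\dR}$, and $\ddr^+(V^*(1)_v)$ pairs to zero with $\ddr^+(V_v)$, we have
\begin{align}
  x_v-x_v^{\varphi}\equiv\sum_{j=1}^g[x_v-x_v^{\varphi},\omega_{v,j}]_{\dR}\,\eta_{v,j}\pmod{\ddr^+(V^*(1)_v)}.
\end{align}
By Lemma \ref{lemma:deRhampairing_bklog}, each coefficient $[x_v-x_v^{\varphi},\omega_{v,j}]_{\dR}$ equals $\bklog_{V^*(1)_v}(d_v)(\omega_{v,j})$. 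The maps $\mathscr{D}(N_v)$ and $\mathscr{D}(N_v')$ are $(F_v\otimes_{\Q_p}E)$-linear, because the splittings were taken $(F_v\otimes_{\Q_p}E)$-linear at the start of this section. Applying $\mathscr{D}(N_v)-\mathscr{D}(N_v')$ to the expansion then gives \eqref{eq:difference_Log}.
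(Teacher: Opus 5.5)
Your proposal is correct and follows essentially the same route as the paper. Both arguments use the same four ingredients: the identity $\mathscr{D}(r_v)=\mathscr{D}(N_v)\circ\bar{s}_v'$ on $\ddr^+(W^*(1)_v)_L$; the vanishing $\bar{s}_v'(x_v^{\varphi})=0$; Lemma \ref{lemma:deRhampairing_bklog} to expand $x_v-x_v^{\varphi}$ in the $\eta_{v,j}$ modulo $\ddr^+(V^*(1)_v)_L$; and the fact that $\mathscr{D}(N_v)$ and $\mathscr{D}(N_v')$ are retractions onto $\ddr^+(V^*(1)_v)_L$.

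The differences are cosmetic. You derive the commutative square for $\mathscr{D}(r_v)$ by dualizing the composite, whereas the paper simply asserts it. You also subtract the two expressions before expanding, whereas the paper expands first.
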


\begin{proof}
  Lemma \ref{lemma:deRhampairing_bklog} implies that
  \begin{align}
    \left[x_v-x_v^{\varphi}-\sum_{j=1}^g(\bklog_{V^*(1)_v}(d_v)(\omega_{v,j}))\eta_{v,j},\omega_{v,i}\right]_{\dR}&=[x_v-x_v^{\varphi},\omega_{v,i}]_{\dR}-\sum_{j=1}^g\bklog_{V^*(1)_v}(d_v)(\omega_{v,j})[\eta_{v,j},\omega_{v,i}]_{\dR}\\[-15pt]
    &=\bklog_{V^*(1)_v}(d_v)(\omega_{v,i})-\bklog_{V^*(1)_v}(d_v)(\omega_{v,i})=0
  \end{align}
  for $1\leq i\leq g$. 
  Hence, we see that 
  \begin{align}
    x_v-x_v^{\varphi}-\sum_{j=1}^g(\bklog_{V^*(1)_v}(d_v)(\omega_{v,j}))\eta_{v,j}\in \ddr^+(V^*(1)_v)_{L}.
  \end{align}
  In other words, if $\{\omega_{v,i}'\}_i$ is a fixed $(F_v \otimes_{\Q_{p}} E)$-basis of $\ddr^+(V^*(1)_v)$, one can write  
  \begin{align}
    x_v-x_v^{\varphi}-\sum_{j=1}^g(\bklog_{V^{*}(1)_{v}}(d_v)(\omega_{v,j}))\eta_{v,j}=\sum_{i}c_i\omega_{v,i}',
  \end{align}
  for some $c_i \in (F_v \otimes_{\Q_{p}} E)_{L}$. 
  We consider the section $\sigma_v:\ddr(W^*(1)_v)_L\to\ddr(V^*(1)_v)_L$ induced by $s_v$ in the exact sequence
  \begin{align}\label{exsqW*}
    \xymatrix{
      0\ar[r] & \ddr(V^*(1)_v)_L\ar[r] & \ddr(W^*(1)_v)_L\ar[r] & (F_v \otimes_{\Q_{p}} E)_{L} \ar[r] & 0.
    }  
  \end{align}
  By the definition of $r_v$ and the construction of $\sigma_v$, the following diagram is commutative:
  \begin{align}\label{diag:r_v_N_v}
    \xymatrix{
      \ddr^+(V^*(1)_v)_L&\ddr^+(W^*(1)_v)_L\ar[l]^{\mathscr{D}(r_v)}\ar@{^{(}->}[d]\\
      \ddr(V^*(1)_v)_L\ar[u]^{\mathscr{D}(N_v)}&\ddr(W^*(1)_v)_L\ar[l]^{\sigma_v}.
    }    
  \end{align}
  Therefore we have
  \begin{align}
    \mathscr{D}(r_v)(x_v)-\mathscr{D}(r_v')(x_v)&=\left(\mathscr{D}(N_v)(\sigma_v(x_v^{\varphi}))+\sum_{j=1}^g\bklog_{V^*(1)_v}(d_v)(\omega_{v,j})\mathscr{D}(N_v)(\eta_{v,j})+\sum_ic_i\omega_{v,i}'\right) \\
    &\ \ \ \ \ -\left(\mathscr{D}(N_v')(\sigma_v(x_v^{\varphi}))+\sum_{j=1}^g\bklog_{V^*(1)_v}(d_v)(\omega_{v,j})\mathscr{D}(N_v')(\eta_{v,j})+\sum_ic_i\omega_{v,i}'\right)\\
    &=\sum_{j=1}^g\bklog_{V^*(1)_v}(d_v)(\omega_{v,j})(\mathscr{D}(N_v)(\eta_{v,j})-\mathscr{D}(N_v')(\eta_{v,j})).
  \end{align}
  Here, we used the facts that $\mathscr{D}(r_v),\mathscr{D}(r_v'):\ddr^+(W^*(1)_v)_L\to\ddr^+(V^*(1)_v)_L$ are splittings of the exact sequence \eqref{exsqW*}
  in the first equality, and that 
  \begin{align}
    \sigma_v(x_v^{\varphi})=\sigma_v(s_v(1))=0
  \end{align}
  in the second equality.
\end{proof}

\subsection{The main results}\label{settings_main_theorem}
In the following, we assume that $V$ is a $2$-dimensional vector space over $E$ and that $V$ has a non-degenerate skew-symmetric and $G_F$-equivariant pairing $[\ ,\ ]:V\times V\to E(1)$. 
The pairing $[\ ,\ ]$ induces the isomorphism $j:V\to V^*(1)$ which sends $v\in V$ to the linear map $[v, *]$. 
Let $[\ ,\ ]_{\dR}:\ddr(V_v)_{L} \times \ddr(V_v)_{L} \to D_{\dR}(E(1))_{L} = (F_v \otimes_{\Q_{p}} E)_{L}$ be the de Rham pairing induced by $[\ ,\ ]$. 
Note that $[\ ,\ ]_{\dR}$ is also skew-symmetric and that $D_{\dR}^{+}(V_v)$ is a free $(F_{v} \otimes_{\Q_{p}} E)$-module of rank $1$. 
For such a representation $V$, we can naturally view the global $p$-adic height pairing defined in Definition \ref{dfn:global-height} as
\begin{align}
   \langle\ ,\ \rangle_{l_F,N}:H^1_{f}(F,V)\times H^1_{f}(F,V)\to L 
\end{align}
via the identification $j_* : H^1_{f}(F,V) \overset{\sim}{\to} H^1_{f}(F,V^*(1))$.

Fix an $(F_v \otimes_{\Q_{p}} E)$-basis $\omega_{v}$ of $D_{\dR}^{+}(V_{v})$ for any $v \mid p$.
We also assume that
{
  \setlength{\leftmargini}{48pt}
  \begin{enumerate}
    \setcounter{enumi}{3}
    \renewcommand{\labelenumi}{\textbf{(Hyp \arabic{enumi})}}
    \item $\{ \phi \omega_{v}, \omega_{v} \}$ forms an $(F_v \otimes_{\Q_{p}} E)$-basis of $D_{\dR}(V_{v})$ for any $v\mid p$.
\end{enumerate}
}
Here, $\phi \coloneq \varphi^{[F_{v, 0}: \Q_{p}]}$.
Note that $[\phi\omega_v, \omega_v]_{\dR}$ is a unit of the ring $F_{v} \otimes_{\Q_{p}} E$ by this condition.
Since $V_v$ is a two-dimensional semistable representation, we can write 
\begin{align}
  \det_{F_{v, 0} \otimes_{\Q_{p}} E} \left( X - \phi ~|~ D_{\st}(V_{v}) \right) = X^{2} - c_{1, v}X -c_{2, v} \label{eq:charpoly}
\end{align}
for some $c_{1, v}, c_{2, v} \in F_{v, 0} \otimes_{\Q_{p}} E$.
By the hypothesis \textbf{(Hyp 4)}, $c_{1, v}$ and $c_{2, v}$ satisfy $\phi^{2}\omega_{v} = c_{1, v}\phi\omega_{v} + c_{2, v} \omega_{v}$.
However, the following lemma shows that $c_{1, v}, c_{2, v} \in E$.
We are deeply grateful to Kentaro Nakamura for valuable discussions concerning the proof of the following lemma.

\begin{lem} \label{lem:coeff of char poly}
  Let $c_{1, v}$, $c_{2, v}$ be in the above characteristic polynomial \eqref{eq:charpoly}.
  Then, there exist $c_{1, v}', c_{2, v}' \in E$ such that the canonical map $E \to F_{v, 0} \otimes_{\Q_{p}} E$ sends $c_{1, v}'$ and $c_{2, v}'$ to $c_{1, v}$ and $c_{2, v}$, respectively.
\end{lem}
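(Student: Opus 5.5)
Write $R \coloneq F_{v,0}\otimes_{\Q_p} E$, let $D \coloneq \dst(V_v)$, and set $f \coloneq [F_{v,0}:\Q_p]$, so that $\phi=\varphi^f$. The plan is to use the structure of $R$ together with the semilinearity of $\varphi$. First, $R \cong \prod_{\tau} E_\tau$ is a finite étale $E$-algebra, and $D$ is a free $R$-module of rank $2$. The operator $\varphi$ on $D$ is $\sigma\otimes 1$-semilinear, where $\sigma$ is the absolute Frobenius of $F_{v,0}$, so $\phi=\varphi^f$ is $R$-linear and the characteristic polynomial \eqref{eq:charpoly} makes sense over $R$.

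The key step is to show that $c_{1,v}$ and $c_{2,v}$ are fixed by the automorphism $\sigma\otimes1$ of $R$. This holds because $\varphi$ is a bijective semilinear map that commutes with $\phi$. Indeed, if $M$ is the matrix of $\phi$ in an $R$-basis $\{e_1,e_2\}$, and $A$ is the matrix of $\varphi$ in that basis (so $\varphi(\sum x_ie_i)=\sum \sigma(x_i)\varphi(e_i)$), then $\varphi\phi=\phi\varphi$ gives $A\,\sigma(M)=M A$. Since $\varphi$ is bijective, $A$ is invertible. Taking characteristic polynomials gives $\sigma(\det(X-M))=\det(X-M)$, so $\sigma(c_{i,v})=c_{i,v}$.

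Finally I use that $R^{\sigma\otimes1=1}=F_{v,0}^{\sigma=1}\otimes_{\Q_p}E=\Q_p\otimes_{\Q_p}E=E$. This follows from flatness of $E$ over $\Q_p$: invariants are the kernel of $\sigma-1$, which commutes with $\otimes_{\Q_p}E$. Hence each $c_{i,v}$ lies in the image of $E\to R$, and this image is injective, which gives the desired $c'_{i,v}$.

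I expect the main obstacle to be the bookkeeping in the key step: one must check that $A$ is invertible as an $R$-matrix (this follows from $\varphi$ being bijective on $D$, a defining property of $(\varphi,N)$-modules) and that semilinearity yields $\sigma$ applied entrywise to $M$. An alternative route, if that is cleaner, is to decompose $D=\prod_\tau D_\tau$ along the idempotents of $R$. Here $\varphi$ permutes the factors cyclically and $\phi$ restricts to $D_\tau$; the maps $\varphi:D_\tau\to D_{\sigma\tau}$ intertwine $\phi|_{D_\tau}$ and $\phi|_{D_{\sigma\tau}}$, so all components of $c_{i,v}$ coincide, which again shows $c_{i,v}\in E$.
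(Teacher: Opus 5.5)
Your argument is correct, but it follows a different route from the paper's.

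\textbf{The paper's route.} The paper first enlarges $E$ to a field $E'$ with $F_{v,0}\otimes_{\Q_p}E'\cong\prod_\tau E'$. It decomposes $\dst(V_v)\otimes_E E'$ into $E'$-planes $D_i$ that $\varphi$ permutes cyclically. Since $\varphi^{j-i}\colon D_i\to D_j$ is an $E'$-linear isomorphism intertwining $\phi|_{D_i}$ and $\phi|_{D_j}$, all $\tau$-components of $c_{1,v},c_{2,v}$ agree, so these coefficients lie in $1\otimes E'$. It then descends to $E$ by faithful flatness of $E'/E$, namely injectivity of $(F_{v,0}\otimes E)/(1\otimes E)\to(F_{v,0}\otimes E')/(1\otimes E')$.

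\textbf{Your route.} You stay over $R=F_{v,0}\otimes_{\Q_p}E$ and prove $(\sigma\otimes 1)$-invariance of the coefficients directly from $A\,\sigma(M)=MA$ with $A$ invertible. You then use $R^{\sigma\otimes1=1}=E$, which is exactness of $-\otimes_{\Q_p}E$ applied to $0\to\Q_p\to F_{v,0}\xrightarrow{\sigma-1}F_{v,0}$.

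\textbf{Comparison.} The two arguments are the same idea in different coordinates. After splitting, $\sigma\otimes 1$ simply permutes the coordinates of $\prod_\tau E'$ cyclically. So your $\sigma$-invariance is exactly the paper's ``all components coincide'', and your invariants computation replaces the faithfully flat descent. Your version avoids the auxiliary field $E'$ and the bookkeeping of ordering the embeddings so that $\varphi(D_i)=D_{i+1}$. The paper's version works only with genuinely $E'$-linear maps on the $D_i$, so no semilinear matrix identities are needed.

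\textbf{A caution on your alternative route.} If you decompose along the idempotents of $R$ without first enlarging $E$, the factors of $R$ need not be copies of $E$: there may be fewer than $[F_{v,0}:\Q_p]$ of them, each a field of degree $>1$ over $E$. The maps $\varphi\colon D_\tau\to D_{\sigma\tau}$ are then only semilinear with respect to the induced field isomorphisms. So ``all components coincide'' must be read as $c_{\sigma\tau}=\sigma(c_\tau)$, which is again your main invariance argument. Taken literally, that route needs the paper's extension of scalars to $E'$ followed by descent.
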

\begin{proof}
  Take a finite extension $E'/E$ sufficiently large so that $F_{v, 0} \otimes_{\Q_{p}} E' \cong \prod_{\tau} E'$, where the product on the right hand side runs over the embeddings $\tau: F_{v, 0} \hookrightarrow E'$.
  If we let $(c_{1, \tau})_{\tau}$ and $(c_{2, \tau})_{\tau}$ be the images of $c_{1, v}$ and $c_{2, v}$ under the map
  \begin{align}
    F_{v,0}\otimes_{\Q_p}E\to F_{v,0}\otimes_{\Q_p}E'\cong \prod_{\tau}E',
  \end{align}
  respectively, then we first show that $c_{1, \tau} = c_{1, \tau'}$ and $c_{2, \tau} = c_{2, \tau'}$ for any $\tau$, $\tau'$.
  Using the above isomorphism, there exist two-dimensional $E'$-vector spaces $D_{i}$ for each $i = 1, 2, \ldots, [F_{v, 0}:\Q_{p}]$ such that $D_{\st}(V_{v})\otimes_EE' \cong \prod_{i=1}^{[F_{v, 0}:\Q_{p}]} D_{i}$ and that $\varphi$ sends $D_{i}$ to $D_{i+1}$.
  Let $\phi_{i} \coloneq \mapres{\phi}{D_{i}}$ be an $E'$-linear map on $D_{i}$ for each $i$.
  For any $i, j$, since $\varphi^{j - i}:D_{i} \to D_{j}$ is an $E'$-linear isomorphism satisfying $\varphi^{j-i} \circ \phi_{i} = \phi_{j} \circ \varphi^{j - i}$, we have $\det_{E'}\left( X - \phi_{i} ~|~ D_{i} \right) = \det_{E'}\left( X - \phi_{j} ~|~ D_{j} \right)$.
  Thus, we see that $c_{1, i} = c_{1, j}$ and $c_{2, i} = c_{2, j}$ for any $i, j$. 
  
  Hence the images of $c_{1, v}$ and $c_{2, v}$ in $F_{v,0} \otimes_{\Q_p} E'$ lie in $1 \otimes_{\Q_{p}} E'$. 
  Since $E'$ is faithfully flat over $E$, the natural map
  \begin{align}
    (F_{v,0}\otimes_{\Q_p}E)/(1\otimes_{\Q_{p}} E)\to (F_{v,0}\otimes_{\Q_p}E')/(1\otimes_{\Q_{p}} E')
  \end{align}
  is injective. Therefore $c_{1, v}, c_{2, v} \in 1 \otimes E$.
\end{proof}

By this lemma, we identify $c_{1, v}$, $c_{2, v}$ with elements of $E$.
Let $\alpha_v, \beta_v \in \overline{E}$ be the roots of the polynomial $X^2-c_{1, v} X-c_{2, v}$, and let $L/E$ be a finite extension containing $\alpha_v$ and $\beta_v$ for all $v\mid p$. 
We define the $(F_{v} \otimes_{\Q_{p}} L)$-submodules $N_{\alpha_v}$ and $N_{\beta_v}$ of $\ddr(V_v)_L$ of rank 1 by
\begin{align}\label{def:splitting_Hodge_filtration}
  N_{\alpha_v} \coloneq (F_v\otimes_{\Q_p}L)(\beta_v\omega_v-\phi\omega_v),\ N_{\beta_v} \coloneq (F_v\otimes_{\Q_p}L)(\alpha_v\omega_v-\phi\omega_v).
\end{align}
Since $\{\phi\omega_v, \omega_{v}\}$ is an $(F_{v} \otimes_{\Q_{p}} L)$-basis of $\ddr(V_v)_L$, we obtain the following lemma.
\begin{lem}\label{lem:proof_splitting}
  The module $\ddr(V_v)_L$ admits two decompositions:
  \begin{align}
    \ddr(V_v)_L=\ddr^+(V_v)_L\oplus N_{\alpha_v}=\ddr^+(V_v)_L\oplus N_{\beta_v}.
  \end{align} 
\end{lem}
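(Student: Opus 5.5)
The lemma is a statement about free modules: it suffices to show that, for $\gamma \in \{\alpha_v, \beta_v\}$, the two vectors $\omega_v$ and $\gamma\omega_v - \phi\omega_v$ form an $(F_v \otimes_{\Q_p} L)$-basis of $\ddr(V_v)_L$. Once this holds, the rank-one submodules $\ddr^+(V_v)_L = (F_v\otimes_{\Q_p}L)\omega_v$ and $N_{\beta_v}$ (respectively $N_{\alpha_v}$) are free with complementary bases, so their sum is direct and equals the whole module.

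The proof will proceed in three steps.

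\begin{enumerate}
\item First I would note that $\ddr^+(V_v)_L$ is free of rank one, generated by $\omega_v$. This follows from the standing assumption that $\ddr^+(V_v)$ is free of rank $1$ over $F_v\otimes_{\Q_p}E$, by base change to $L$.
\item By \textbf{(Hyp 4)}, after extending scalars, $\{\phi\omega_v,\omega_v\}$ is an $(F_v\otimes_{\Q_p}L)$-basis of $\ddr(V_v)_L$.
\item The change of basis from $(\phi\omega_v,\omega_v)$ to $(\gamma\omega_v-\phi\omega_v,\ \omega_v)$ is given by the matrix $\begin{pmatrix} -1 & 0\\ \gamma & 1\end{pmatrix}$, whose determinant $-1$ is a unit in $F_v\otimes_{\Q_p}L$. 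Hence the new pair is again a basis. Here $\gamma \in L$ acts through $1\otimes L$, so this is a legitimate matrix over the ring.
\end{enumerate}

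Given an arbitrary element $x = a\,\phi\omega_v + b\,\omega_v$, the explicit decomposition is
\[
x = -a(\gamma\omega_v - \phi\omega_v) + (b + a\gamma)\omega_v.
\]
Uniqueness of this decomposition follows from the linear independence established in step 3.

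The only subtle point is that $F_v\otimes_{\Q_p}L$ need not be a field, so I cannot argue by dimension counting alone. The unit-determinant argument above avoids this issue, and no step is expected to present a real obstacle.
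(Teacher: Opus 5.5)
Your proposal is correct and follows the paper's argument: the paper deduces the lemma directly from the fact that $\{\phi\omega_v,\omega_v\}$ is an $(F_v\otimes_{\Q_p}L)$-basis of $\ddr(V_v)_L$. Your change of basis to $\{\gamma\omega_v-\phi\omega_v,\omega_v\}$, with determinant $-1$, spells out that step and correctly avoids any dimension count over the possibly non-field ring $F_v\otimes_{\Q_p}L$.
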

We identify the modules $N_{\alpha_v}$ and $N_{\beta_v}$ with the $(F_v \otimes_{\Q_{p}} L)$-linear maps 
\begin{align}
    N_{\alpha_v}, N_{\beta_v}:\ddr(V_v)_L/\ddr^+(V_v)_L \to \ddr(V_v)_L
\end{align}
defined by $N_{\alpha_v}(\phi\omega_v) \coloneq \phi\omega_v - \beta_v\omega_v$ and $N_{\beta_v}(\phi\omega_v) \coloneq \phi\omega_v - \alpha_v\omega_v$ respectively. 
By Lemma \ref{lem:proof_splitting}, $N_{\alpha_v}$ and $N_{\beta_v}$ are splittings of the Hodge filtration \eqref{Hodgefil}.
The following theorem is our main result.

\begin{thm}\label{thm:MainTheorem} 
  Let $V$ be a two-dimensional self-dual symplectic $p$-adic Galois representation of $G_F$ satisfying \textbf{(Hyp 1)}-\textbf{(Hyp 4)}. 
  We put $N_{\alpha} \coloneq \{N_{\alpha_v}\}_{v\mid p}$ and $N_{\beta} \coloneq \{N_{\beta_v}\}_{v\mid p}$. 
  Then we have
  \begin{align}\label{eq:differ_height1}
    &\langle d,a\rangle_{l_F,N_{\alpha}}-\langle d,a\rangle_{l_F,N_{\beta}} \\
    &=\sum_{v\mid p}(\alpha_{v} - \beta_{v})(l_{F, v} \circ \exp_{E(1)}^{\rm BK})([\phi\omega_v, \omega_v]_{\dR}^{-1}\cdot\bklog_{V_v}(d_v)(\omega_v)\cdot\bklog_{V_v}(a_v)(\omega_v))
  \end{align}
  for $a,d\in H^1_{f}(F,V)$. 
  Here, $\cdot$ is the multiplication in the ring $F_{v} \otimes_{\Q_{p}} L$.
  In particular, we have
  \begin{align}\label{eq:differ_height2}
    &\langle a, a \rangle_{l_F,N_{\alpha}} - \langle a, a \rangle_{l_F, N_{\beta}} \\
    &= \sum_{v\mid p}(\alpha_{v} - \beta_{v})(l_{F, v} \circ \exp_{E(1)}^{\rm BK})([\phi\omega_v, \omega_v]_{\dR}^{-1}\cdot\bklog_{V_v}(a_v)(\omega_v)^2).
  \end{align}
\end{thm}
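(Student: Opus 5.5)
Starting from \eqref{eq:difference_height_arising_from_splittings}, the difference of the two height pairings is
\[
\sum_{v\mid p}(1\otimes(l_{F,v}\circ\exp^{\BK}_{E(1)}))\circ(d\otimes 1)\circ(u_v-u_v')\circ\mu_v\circ[\cal{E}_v](a),
\]
with $N=N_\alpha$ and $N'=N_\beta$. By Lemma \ref{indepent_mixed_extension}, $\mu_v\circ[\cal{E}_v](a)=a_v$. By Lemma \ref{lem:difference_via_splittings}, $(d\otimes1)\circ(u_v-u_v')=r_v-r_v'$. Each local term therefore equals $(l_{F,v}\circ\exp^{\BK}_{E(1)})\bigl((r_v-r_v')(a_v)\bigr)$, where $a_v$ is read in $\ddr(V_v)/\ddr^+(V_v)$ via $\bklog_{V_v}$ and $(r_v-r_v')(a_v)\in(F_v\otimes E)_L$.

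The next step is to compute this element by duality. The difference $r_v-r_v'$ is a map $\ddr(V_v)/\ddr^+(V_v)\to(F_v\otimes E)_L$. Its $\mathscr{D}$-dual is $\mathscr{D}(r_v)-\mathscr{D}(r_v')$, which vanishes on $\ddr^+(V^*(1)_v)$ and so factors through $(F_v\otimes E)_L$. Evaluating it on the lift $x_v$ of $1$ gives $\mathscr{D}(r_v)(x_v)-\mathscr{D}(r_v')(x_v)$. Pairing with $a_v$ under $[\ ,\ ]_{\dR}$ should then give
\[
(r_v-r_v')(a_v)=\bigl[\mathscr{D}(r_v)(x_v)-\mathscr{D}(r_v')(x_v),\,\tilde a_v\bigr]_{\dR},
\]
for any lift $\tilde a_v$ of $a_v$. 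Checking this sign and normalisation carefully is one bookkeeping step.

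Now apply \eqref{eq:difference_Log} with $g=1$ and $\omega_{v,1}=\omega_v$. Transported through $j$, the right-hand side becomes
\[
\bklog_{V_v}(d_v)(\omega_v)\,\bigl(\mathscr{D}(N_{\alpha_v})-\mathscr{D}(N_{\beta_v})\bigr)(\eta_v),
\]
where $\eta_v\in\ddr(V_v)$ satisfies $[\eta_v,\omega_v]_{\dR}=1$. A natural choice is $\eta_v=[\phi\omega_v,\omega_v]_{\dR}^{-1}\phi\omega_v$.

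The identification of $\mathscr{D}(N)$ under the self-duality $j$ is the step I expect to be the main obstacle. Under $j$, $\ddr^+(V^*(1))$ corresponds to $\ddr^+(V)$, and $\mathscr{D}(N)$ becomes the projection $\ddr(V)\to\ddr^+(V)$ along the line $N$. Projecting $\phi\omega_v$ along $N_{\alpha_v}=\langle\phi\omega_v-\beta_v\omega_v\rangle$ gives $\beta_v\omega_v$. Projecting along $N_{\beta_v}$ gives $\alpha_v\omega_v$. Hence
\[
\bigl(\mathscr{D}(N_{\alpha_v})-\mathscr{D}(N_{\beta_v})\bigr)(\eta_v)=[\phi\omega_v,\omega_v]_{\dR}^{-1}(\beta_v-\alpha_v)\,\omega_v,
\]
up to a sign coming from the duality convention (adjoint versus transpose). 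I would fix that sign by tracking the definition of $\mathscr{D}$ and skew-symmetry, and it has to come out matching the stated $\alpha_v-\beta_v$.

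Finally, pairing with $\tilde a_v$ gives $[\omega_v,\tilde a_v]_{\dR}=\pm\bklog_{V_v}(a_v)(\omega_v)$, by the definition \eqref{definition:Bloch-Kato_log} together with skew-symmetry. Combining the two signs yields \eqref{eq:differ_height1}. Setting $d=a$ gives \eqref{eq:differ_height2}.
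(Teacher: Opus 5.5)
Your argument is the paper's proof step for step: \eqref{eq:difference_height_arising_from_splittings}, Lemmas \ref{indepent_mixed_extension} and \ref{lem:difference_via_splittings}, the identity $(r_v-r_v')(a_v)=[\mathscr{D}(r_v)(x_v)-\mathscr{D}(r_v')(x_v),\tilde a_v]_{\dR}$, then \eqref{eq:difference_Log} with $g=1$ and $\eta_v=[\phi\omega_v,\omega_v]_{\dR}^{-1}\phi\omega_v$, and the skew-symmetry computation of $\mathscr{D}(N_{\alpha_v})(\eta_v)$ and $\mathscr{D}(N_{\beta_v})(\eta_v)$. The one defect is the sign. The sign is part of the statement, so saying it ``has to come out matching'' is not a proof. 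Moreover, you have put the sign in the wrong place, and as written your computation gives the opposite sign.

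Both steps where you hedge carry no sign. Conjugated by $j(x)=[x,\ \cdot\ ]$, the map $\mathscr{D}(N)$ is exactly the projection onto $\ddr^+(V_v)_L$ along $\Im N$. For $N=N_{\alpha_v}$ it kills $j(\phi\omega_v-\beta_v\omega_v)$, because $\Im N_{\alpha_v}$ is spanned by $\phi\omega_v-\beta_v\omega_v$ and $[y,y]_{\dR}=0$. It fixes $j(\omega_v)\in\ddr^+(V^*(1)_v)_L$ because it is a retraction. Hence $(\mathscr{D}(N_{\alpha_v})-\mathscr{D}(N_{\beta_v}))(\eta_v)=(\beta_v-\alpha_v)[\phi\omega_v,\omega_v]_{\dR}^{-1}\omega_v$ exactly. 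Likewise $[\omega_v,\tilde a_v]_{\dR}=\bklog_{V_v}(a_v)(\omega_v)$ with no sign, since $\omega_v$ enters $\bklog_{V_v}(a_v)$ through $j(\omega_v)$.

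The sign sits in the step you wrote without a hedge, the transport of \eqref{eq:difference_Log} through $j$. There the coefficient is $\bklog_{V^*(1)_v}(j_*d_v)(\omega_v)=[x_v-x_v^{\varphi},\omega_v]_{\dR}$ by Lemma \ref{lemma:deRhampairing_bklog}. Also $x_v-x_v^{\varphi}\equiv j(\tilde d_v)$ modulo $\ddr^+(V^*(1)_v)_L$, for a lift $\tilde d_v$ of $(\exp^{\BK}_{V_v})^{-1}(d_v)$. So this coefficient equals $[\tilde d_v,\omega_v]_{\dR}$ for the pairing on $\ddr(V_v)_L$, and skew-symmetry gives $[\tilde d_v,\omega_v]_{\dR}=-[\omega_v,\tilde d_v]_{\dR}=-\bklog_{V_v}(d_v)(\omega_v)$.

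Your transported formula is therefore missing a factor $-1$, which is why you would get $\beta_v-\alpha_v$. Restoring it gives $\mathscr{D}(r_v)(x_v)-\mathscr{D}(r_v')(x_v)=(\alpha_v-\beta_v)[\phi\omega_v,\omega_v]_{\dR}^{-1}\bklog_{V_v}(d_v)(\omega_v)\,\omega_v$, which is exactly the paper's line. Pairing with $\tilde a_v$ then yields \eqref{eq:differ_height1} with the correct sign.
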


\begin{proof}
  By Lemma \ref{lem:difference_via_splittings} and \eqref{eq:difference_height_arising_from_splittings}, it holds that
  \begin{align}
    \langle d, a \rangle_{l_F, N_{\alpha}} - \langle d, a \rangle_{l_F, N_{\beta}} = \sum_{v \mid p}(l_{F, v} \circ \exp_{E(1)}^{\BK} \circ (r_v - r_v') \circ \mu_v \circ [\cal{E}_v])(a).
  \end{align}
  Fix a place $v$ of $F$ above $p$. 
  By Lemma \ref{indepent_mixed_extension},
  \begin{align}
    (r_v-r_v')\circ \mu_v\circ [\cal{E}_v](a)=(r_v-r_v')((\exp_{V_v}^{\mathrm{BK}})^{-1}(a_v))=[\mathscr{D}(r_v)(x_v)-\mathscr{D}(r_v')(x_v), (\exp_{V_v}^{\mathrm{BK}})^{-1}(a_v)]_{\dR}.
  \end{align}
  Here, the second equality follows from the commutative diagram
  \begin{align}
    \xymatrix{ 
      \ddr(V_v)_L/\ddr^+(V_v)_L\ar[d]_{\cong}\ar[rr]^{\qquad r_v-r'_v}&&(F_{v} \otimes_{\Q_{p}} E)_{L}\ar[d]^{\cong}\\
      \Hom(\ddr^+(V_v)_L, D_{\dR}(E(1))_L)\ar[rr]^{\mathscr{D}(r_v-r'_v)^*}&&\Hom(\ddr^+(E)_L, D_{\dR}(E(1))_L).
    }    
  \end{align}
  We calculate $l_{F, v}\circ \exp_{E(1)}^{\BK}([\mathscr{D}(r_v)(x_v) - \mathscr{D}(r_v')(x_v), (\exp_{V_{v}}^{\BK})^{-1}(a_v)]_{\dR})$.  
  Since the pairing $[\ ,\ ]$ is skew-symmetric, we have
  \begin{align}\label{eq:alt_vanish}
    [\phi\omega_v - \beta_v\omega_v, \phi\omega_v - \beta_v\omega_v]_{\dR}=0.
  \end{align}
  From the fact that the functor $\mathscr{D}$ is defined by the de Rham pairing, \eqref{eq:alt_vanish} implies that $\mathscr{D}(N_{\alpha_v})(\phi\omega_v - \beta_v\omega_v) = 0$. 
  Thus we obtain $\mathscr{D}(N_{\alpha_v})(\phi \omega_v)=\mathscr{D}(N_{\alpha_v})(\beta_v\omega_v)=\beta_v\omega_v$ since $\omega_v \in \ddr^+(V_v)_L$.
  Put $\eta_v \coloneq [\phi\omega_v, \omega_v]_{\dR}^{-1}\phi\omega_v\in\ddr(V_v)_L$. 
  Then $\eta_v$ satisfies that $[\eta_v, \omega_{v}]_{\dR}=1$ and
  we have
  \begin{align}
    \mathscr{D}(N_{\alpha_v})(\eta_v) = [\phi\omega_v, \omega_v]_{\dR}^{-1}\mathscr{D}(N_{\alpha_v})(\phi\omega_v) = [\phi\omega_v, \omega_v]_{\dR}^{-1}\beta_{v}\omega_v.
  \end{align}
  Similarly, we also obtain
  \begin{align}
    \mathscr{D}(N_{\beta_v})(\eta_v) = [\phi\omega_v, \omega_v]_{\dR}^{-1}\alpha_{v} \omega_v.
  \end{align}
  By \eqref{eq:difference_Log}, we have
  \begin{align}
    \mathscr{D}(r_v)(x_v)-\mathscr{D}(r_v')(x_v)&=-\bklog_{V_v}(d_v)(\omega_v)(\mathscr{D}(N_{\alpha_v})(\eta_v)-\mathscr{D}(N_{\beta_v})(\eta_v)) \\
    &= (\alpha_{v} - \beta_{v}) [\phi\omega_v, \omega_v]_{\dR}^{-1} \bklog_{V_v}(d_v)(\omega_v)\omega_v.
  \end{align}
  Therefore 
  \begin{align}
    &(l_{F, v} \circ \exp_{E(1)}^{\BK})([\mathscr{D}(r_v)(x_v)-\mathscr{D}(r_v')(x_v), (\exp_{V_{v}}^{\BK})^{-1}(a_v)]_{\dR}) \\
    &=(\alpha_{v} - \beta_{v}) \left( (l_{F, v} \circ \exp_{E(1)}^{\BK})([\phi\omega_v, \omega_v]_{\dR}^{-1} \bklog_{V_v}(d_v)(\omega_v)[\omega_v,(\exp_{V_v}^{\mathrm{BK}})^{-1}(a_v)]_{\dR}) \right) \\
    &=(\alpha_{v} - \beta_{v}) \left( (l_{F, v} \circ \exp_{E(1)}^{\BK}) ([\phi\omega_v, \omega_v]_{\dR}^{-1} \bklog_{V_v}(d_v)(\omega_v)\bklog_{V_v}(a_v)(\omega_v)) \right)
  \end{align}
  by the definition of $\bklog_{V_v}$ given in \eqref{definition:Bloch-Kato_log}.
\end{proof}

By applying Theorem \ref{thm:MainTheorem} to the cyclotomic logarithm $l_F^c$, we can prove non-triviality of $p$-adic height pairings in the next corollary. 
First, we recall the definition of $l_F^c$. 
For every place $v$ of $F$, the $v$-th component $l_{F,v}^c$ of $l_F^c$ is defined by
\begin{align}
  l_{F,v}^c(x) \coloneq \begin{cases}
    \ord_{v}(x)\log_p(q_v)&(v\nmid p\infty), \\
    -\log_p\norm_{F_v/\Q_p}(x)&(v\mid p),\\
    0&(v\text{ ; archimedean}),
  \end{cases}
\end{align}
where $q_{v}$ is the cardinality of the residue field of $F_{v}$, $\log_p$ is the $p$-adic logarithm on $\Q_p^{\times}$ satisfying $\log_{p}p=0$ and $\norm_{F_v/\Q_p}:F_v^{\times} \to \Q_p^{\times}$ is the norm map.

Before stating the following corollary, we recall the definition of the Witt index of a quadratic space.
Let $W$ be a finite-dimensional vector space over a field $K$ of characteristic 0, and $q:W \to K$ be a non-degenerate quadratic form.
A subspace $W'$ of $W$ is said to be totally isotropic if $q(x) = 0$ for every $x \in W'$.
We define the Witt index $i_{q}$ of $q$ to be the maximum of $\dim_{K} W'$, where $W'$ ranges over all totally isotropic subspaces of $W$.
Note that $i_{q} \leq (\dim_{K} W)/2$.
(See \cite{Voi}, for example.)

For a prime $v \mid p$ of $F$, let $\tau: F_{v} \otimes_{\Q_{p}} E \to E$ be the $E$-linear extension of the trace map $\Tr_{F_{v}/\Q_{p}}$, and define a quadratic form $Q$ on $F_{v} \otimes_{\Q_{p}} E$, regarded as an $E$-vector space, by $Q(x) = \tau([\phi\omega_{v}, \omega_{v}]_{\dR}^{-1} x^{2})$.
Since the trace pairing is non-degenerate and $[\phi\omega_v, \omega_v]_{\dR}$ is a unit of $F_{v} \otimes_{\Q_{p}} E$, the quadratic form $Q$ is non-degenerate.
In particular, we see that $i_{Q} \leq [F_{v}:\Q_{p}]/2$.

\begin{cor}\label{cor:non-triviality}
  In the situation of Theorem \ref{thm:MainTheorem}, we assume that there exists a unique prime $v$ of $F$ above $p$, that $\alpha_v\neq \beta_v$, and that $\dim_{E} (\Im (\loc_{v})) > i_{Q}$, where $\loc_{v}:H^{1}_{f}(F, V) \to H^{1}_{f}(F_{v}, V_{v})$ is the localization map at $v$.
  Then either $\langle\ ,\ \rangle_{l_F^c,N_{\alpha_v}}$ or $\langle\ ,\ \rangle_{l_F^c,N_{\beta_v}}$ is non-trivial. 
  In particular, if $X^2-c_{1, v}X-c_{2, v}$ is irreducible over $E$, both $\langle\ ,\ \rangle_{l_F^c,N_{\alpha_v}}$ and $\langle\ ,\ \rangle_{l_F^c,N_{\beta_v}}$ are non-trivial.
\end{cor}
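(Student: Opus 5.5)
We argue by contradiction: suppose both $\langle\ ,\ \rangle_{l_F^c,N_{\alpha_v}}$ and $\langle\ ,\ \rangle_{l_F^c,N_{\beta_v}}$ vanish identically. Then the left-hand side of \eqref{eq:differ_height2} in Theorem \ref{thm:MainTheorem} vanishes for every $a\in H^1_f(F,V)$. Since $v$ is the unique prime above $p$, the right-hand side has only the single term at $v$, and $\alpha_v-\beta_v\neq 0$. We conclude that
\begin{align}
  (l^c_{F,v}\circ\exp^{\BK}_{E(1)})\bigl([\phi\omega_v,\omega_v]_{\dR}^{-1}\cdot\bklog_{V_v}(a_v)(\omega_v)^2\bigr)=0
\end{align}
for all $a\in H^1_f(F,V)$.

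The key computation is to identify $l^c_{F,v}\circ\exp^{\BK}_{E(1)}$ on $D_{\dR}(E(1))=F_v\otimes_{\Q_p}E$ with $-\tau$, up to extending scalars to $L$. Here $\exp^{\BK}_{E(1)}$ is the usual $p$-adic exponential $F_v\otimes E\to F_v^\times\widehat\otimes E$ composed with the Kummer map (cf. \cite{BK90}). On a neighbourhood of $1$ we have $-\log_p\norm_{F_v/\Q_p}(\exp x)=-\Tr_{F_v/\Q_p}(x)$, and extending $E$-linearly gives $-\tau$. This is precisely the point where the cyclotomic logarithm is essential: for a general $l_F$, the local component would be an arbitrary linear functional rather than the trace. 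Consequently $Q(\bklog_{V_v}(a_v)(\omega_v))=0$ for all $a$.

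Next define the $E$-linear map
\begin{align}
  \lambda:\Im(\loc_v)\to F_v\otimes_{\Q_p}E,\qquad a_v\mapsto\bklog_{V_v}(a_v)(\omega_v).
\end{align}
This map is injective: $\bklog_{V_v}$ is an isomorphism onto $\ddr(V_v)/\ddr^+(V_v)\cong\Hom_{F_v\otimes E}(\ddr^+(V_v),D_{\dR}(E(1)))$, and evaluation at the basis $\omega_v$ of the rank-one free module $\ddr^+(V_v)$ is an isomorphism. So $\lambda(\Im(\loc_v))$ is a subspace of dimension $\dim_E\Im(\loc_v)$ on which $Q$ vanishes identically, i.e.\ a totally isotropic subspace for the non-degenerate form $Q$. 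By the definition of the Witt index this forces $\dim_E\Im(\loc_v)\le i_Q$, contradicting the hypothesis. Before applying this we must check that $\lambda$ takes values in $F_v\otimes E$ rather than only in the $L$-extension, and that the $L$-valued vanishing implies $E$-valued vanishing. Both hold because $\omega_v$ and $[\phi\omega_v,\omega_v]_{\dR}$ are defined over $E$.

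For the final assertion, suppose $X^2-c_{1,v}X-c_{2,v}$ is irreducible over $E$, and let $\sigma\in\Gal(L/E)$ swap $\alpha_v$ and $\beta_v$. Then $N_{\beta_v}=\sigma(N_{\alpha_v})$, and the height construction is compatible with Galois conjugation of the coefficients, so $\langle\ ,\ \rangle_{N_{\beta_v}}=\sigma\circ\langle\ ,\ \rangle_{N_{\alpha_v}}$ on the $E$-rational Selmer group. Hence one pairing is trivial if and only if the other is, and the first part shows both are non-trivial. Irreducibility also guarantees $\alpha_v\neq\beta_v$ in characteristic $0$.

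The main obstacle is the identification $l^c_{F,v}\circ\exp^{\BK}_{E(1)}=-\tau$, together with checking that it is compatible with the signs and normalisations used in the definition of $Q$. A sign error there is harmless, since $Q$ and $-Q$ have the same isotropic subspaces.
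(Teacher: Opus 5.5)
Your proposal is correct and is essentially the paper's proof: it uses the identification $l^{c}_{F,v}\circ\exp^{\BK}_{E(1)}=-\tau$, the map $\lambda\colon a\mapsto\bklog_{V_v}(a_v)(\omega_v)$ whose image has $E$-dimension $\dim_E\Im(\loc_v)>i_Q$ and so cannot be totally isotropic for $Q$, the single-term formula \eqref{eq:differ_height2} with $\alpha_v\neq\beta_v$, and Galois conjugation of $N_{\alpha_v}$ and $N_{\beta_v}$ for the irreducible case. The only difference is cosmetic: you argue by contradiction, whereas the paper directly exhibits an $a$ with $Q(\lambda(a))\neq 0$, so that $\langle a,a\rangle_{l_F^c,N_{\alpha_v}}-\langle a,a\rangle_{l_F^c,N_{\beta_v}}=(\beta_v-\alpha_v)Q(\lambda(a))\neq 0$.
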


\begin{rem}
  Put $d \coloneq [F_{v} : \Q_p]$.
  Since $Q$ is a $d$-dimensional quadratic form over $E$, its Witt index $i_{Q}$ is determined by the dimension, the discriminant, and the Hasse--Witt invariant of $Q$, all of which can be read off from the Gram matrix of $Q$ with respect to an $E$-basis of $F_{v} \otimes_{\Q_{p}} E$.
    
  If $d = 1$, that is $F_{v} = \Q_{p}$, then $Q$ is one-dimensional, hence anisotropic, and $i_{Q} = 0$.
  Thus the hypothesis $\dim_{E}(\Im(\loc_{v})) > i_{Q}$ amounts to the non-vanishing of $\loc_{v}$, which yields Corollary \ref{cor:main_theorem}.
    
  If $d = 2$, write $F_{v} = \Q_{p}(\sqrt{\delta})$ and let $N_{F_{v} \otimes_{\Q_p} E/E}$ be the norm map of $F_{v} \otimes_{\Q_p} E$ over $E$.
  Computing the Gram matrix with respect to the basis $\{ 1,  \sqrt{\delta} \}$, we find $\det Q \equiv \delta N_{F_{v} \otimes_{\Q_p} E/E}([\varphi\omega_{v}, \omega_{v}]_{\mathrm{dR}}) \bmod (E^{\times})^2$.
  Since a binary form is isotropic if and only if the negative of its discriminant is a square, we have $i_{Q} = 1$ if $-\delta N_{F_{v} \otimes_{\Q_p} E/E}([\varphi\omega_{v}, \omega_{v}]_{\dR}) \in (E^{\times})^2$, and $i_{Q} = 0$ otherwise.
\end{rem}

\begin{proof}[Proof of Corollary \ref{cor:non-triviality}]
  Note that $l_{F, v}^{c} \circ \exp_{E(1)}^{\BK} = - \tau$ by the definition of $l_{F}^{c}$.
  Let $\lambda:H^{1}_{f}(F, V) \to F_{v} \otimes_{\Q_{p}} E$ be the map defined by $\lambda(a) = \log_{V_{v}}^{\BK}(a_{v})(\omega_{v})$, and $W \coloneq \Im \lambda$.
  Then we see that $\dim_{E} W = \dim_{E} (\Im(\loc_{v}))$ since $\log_{V_{v}}^{\BK}$ and the map given by pairing with $\omega_v$ are both isomorphisms.
  Since $\dim_{E} W > i_{Q}$, the subspace $W$ is not totally isotropic, so there exists $a \in H^{1}_{f}(F, V)$ such that $Q(\lambda(a)) \neq 0$.  
  By the assumption $\alpha_{v} \neq \beta_{v}$ and \eqref{eq:differ_height2}, we find that 
  \begin{align}
    \langle a,a\rangle_{l_F^c,N_{\alpha_v}}-\langle a,a\rangle_{l_F^c,N_{\beta_v}}
    &= (\alpha_{v} - \beta_{v}) l_{F, v}^{c} \left( \exp_{E(1)}^{\BK} \left( [\phi\omega_{v}, \omega_{v}]_{\dR}^{-1} \lambda(a)^{2} \right) \right) \\
    &= (\beta_{v} - \alpha_{v}) Q(\lambda(a)) \neq 0.
  \end{align}
  It follows that either $\langle\ ,\ \rangle_{l_F^c,N_{\alpha_v}}$ or $\langle\ ,\ \rangle_{l_F^c,N_{\beta_v}}$ is non-trivial. 
  When $X^2-c_{1, v}X-c_{2, v}$ is irreducible over $E$, $\alpha_v$ and $\beta_v$ are conjugate; hence $N_{\alpha_v}$ and $N_{\beta_v}$ are conjugate as well. 
  Consequently, $\langle\ , \ \rangle_{l_F^c, N_{\alpha_v}}$ and $\langle\ , \ \rangle_{l_F^c,N_{\beta_v}}$ are also conjugate by their constructions, which completes the proof of the latter part of the corollary.
\end{proof}

\begin{ex}
  Put $F = \Q(\sqrt{7})$ and $p = 5$.  
  Then, $p$ is inert in $F/\Q$, and therefore $[F_{v}:\Q_{p}] = 2$ where $v$ is the unique prime of $F$ above $p$.
  Let $A$ be the elliptic curve defined by $y^{2} = x^{3} -x + 1$, and consider the $p$-adic representation $V=V_{p}A$ of $G_{F}$.
  Then $A$ has good reduction at $v$, so that $V$ is a crystalline representation at $v$.
  Since $A$ is non-CM, $V$ satisfies \textbf{(Hyp 4)} by Serre--Tate (cf. \cite{Ser89}).
  The Hecke polynomial of $A$ at $v$ has two roots $-3\pm 4\sqrt{-1}$.
  Using the Nagell--Lutz theorem, we can check that the Mordell--Weil group $A(F)$ has two points $P_{1} = (0,1)$ and $P_{2} = (2, \sqrt{7})$ of infinite order. 
  Let $\sigma \in \Gal(F/\Q)$ be the non-trivial element, and $\omega_{A} = dx/2y$ be the invariant differential of $A$.  
  Since $\sigma(P_1)=P_1$ and $\sigma(P_2)=-P_2$, $P_1$ and $P_2$ are independent, and $\log_{\omega_A}(P_1)\in\Q_5$ and $\log_{\omega_A}(P_2)\in \sqrt{7}\Q_5$. 
  Via the local Kummer map, the images of $P_1$ and $P_2$ therefore form a $\Q_5$-basis of $H^1_f(F_v,V_v)$.
  Therefore, the localization map $H^{1}_{f}(F, V) \to H^{1}_{f}(F_{v}, V_{v})$ is surjective, in particular, $\dim_{\Q_{5}}(\Im(\loc_{v})) > i_{Q}$.
\end{ex}

\section{Applications}\label{sec:Applications}
\noindent 
In this section, we generalize the results of \cite[Appendix A]{BKO24} to higher weight modular forms. 
Let $f = \sum_{n \geq 1} a_{n}(f)q^{n}$ be a normalized Hecke eigen newform of even weight $k$ for $\Gamma_0(N)$ and let $K_{f}$ be the Hecke field of $f$.
For a place $\mathfrak{p}$ of $K_{f}$ above $p$, let $V_{f, \mathfrak{p}}$ be a two-dimensional $p$-adic Galois representation of $G_{\Q}$ with coefficients of $K_{f, \mathfrak{p}}$ associated to $f$ constructed by Deligne in \cite{Del69}. 
Here, $K_{f, \fr{p}}$ is the completion of $K_{f}$ at $\fr{p}$. 
We use the convention that the $p$-adic cyclotomic character has Hodge--Tate weight 1, under which $V_{f,\mathfrak{p}}$ has Hodge--Tate weights $1-k$ and 0.
By the Poincar\'e duality, it is known that $V_{\fr{p}} \coloneq V_{f, \fr{p}}(k/2)$ has a non-degenerate skew-symmetric $G_{\Q}$-equivariant pairing
\begin{align}
  [~, ~]:V_{\fr{p}} \times V_{\fr{p}} \to K_{f, \fr{p}}(1).
\end{align}
Assume that $p \nmid N$.
Therefore, $V_{\fr{p}}$ is a crystalline representation at $p$, and satisfies \textbf{(Hyp 1)-(Hyp 3)} by Scholl \cite{Sch90}
(cf. \cite[Section 8.3]{Nek93}).
We recall the structure of $D_{\dR}(V_{\fr{p}})$.
Let $\omega_{f}$ be the element of $D_{\dR}(V_{\fr{p}})$ corresponding to a differential form attached to $f$ by the comparison theorem.
It is known that the de Rham filtration of $D_{\dR}(V_{\fr{p}})$ is given by
\begin{align}
  \Fil^{i} D_{\dR}(V_{\fr{p}}) = \begin{cases}
    D_{\dR}(V_{\fr{p}}) & \left(i \leq - \frac{k}{2} \right), \\
    K_{f, \fr{p}} \omega_{f} & \left(1-\frac{k}{2} \leq i \leq \frac{k}{2} - 1 \right), \\
    0 & \left(i \geq \frac{k}{2} \right).
  \end{cases}  \label{eq:filtration}
\end{align}
The action of $\varphi$ on $D_{\cris}(V_{\fr{p}})$ satisfies $\varphi^{2} -a_{p}(f)p^{-k/2}\varphi + p^{-1} =0$.
Let $\alpha$, $\beta$ be the roots of $X^{2} - a_{p}(f)p^{-k/2}X + p^{-1}$.
Let $L$ be a finite extension of $K_{f, \fr{p}}$ containing $\alpha$ and $\beta$.
We define splittings $N_{\alpha}$ and $N_{\beta}$ in the same way as in the previous section.
Thus, we can define the $p$-adic height pairing 
\begin{align}
  H^{1}_{f}(\Q, V_{\fr{p}}) \times H^{1}_{f}(\Q, V_{\fr{p}}) \to L,  
\end{align}
for a non-trivial continuous homomorphism $l_{\Q}:\A_{\Q}^{\times}/\Q^{\times} \to \Q_{p}$.
Let $[~, ~]_{\dR}$ be the de Rham pairing induced by the above pairing on $V_{\fr{p}}$ as in the previous section.
Applying Theorem \ref{thm:MainTheorem} and Corollary \ref{cor:non-triviality} to $V_{\fr{p}}$, we obtain the following.

\begin{cor}\label{cor:differ_modular}
  Assume that $f$ is non-ordinary at $p$, namely, we assume that $a_{p}(f)$ is not a $p$-adic unit. 
  \begin{enumerate}
      \item For $a, d \in H^{1}_f(\Q, V_{\fr{p}})$, we have
      \begin{align}
        &\langle d, a \rangle_{l_{\Q}, N_{\alpha}} - \langle d, a \rangle_{l_{\Q}, N_{\beta}} \\
        &= (\alpha - \beta) (l_{\Q, p} \circ \exp_{K_{f, \fr{p}}(1)}^{\BK}) \left([\varphi\omega_{f}, \omega_{f}]_{\dR}^{-1} \cdot \log^{\BK}_{V_{\fr{p}}}(d_p)(\omega_f) \cdot \log^{\BK}_{V_{\fr{p}}}(a_p)(\omega_f) \right).
      \end{align}
      In particular, we have
      \begin{align}
        \langle a,a \rangle_{l_{\Q},N_{\alpha}} - \langle a,a \rangle_{l_{\Q},N_{\beta}} = (\alpha - \beta) (l_{\Q, p} \circ \exp_{K_{f, \fr{p}}(1)}^{\BK}) \left([\varphi\omega_{f}, \omega_{f}]_{\dR}^{-1} \cdot \log^{\BK}_{V_{\fr{p}}}(a_{p})(\omega_{f})^{2}\right)
      \end{align}
      for $a \in H^{1}_{f}(\Q, V_{\fr{p}})$.

      \item We also assume that $\alpha\neq\beta$ and that the localization map $H^{1}_{f}(\Q, V_{\fr{p}}) \to H^{1}_{f}(\Q_{p}, V_{\fr{p}})$ at $p$ is non-zero.
      Then, either $\langle ~ , ~ \rangle_{l_{\Q}^{c}, N_{\alpha}}$ or $\langle ~ , ~ \rangle_{l_{\Q}^{c}, N_{\beta}}$ is non-trivial. 
      In particular, if the polynomial $X^2-a_{p}(f)p^{-k/2}X+p^{-1}$ is irreducible over $K_{f, \fr{p}}$, then $\langle ~ , ~ \rangle_{l_{\Q}^{c},N_{\alpha}}$ and $\langle ~ , ~ \rangle_{l_{\Q}^{c}, N_{\beta}}$ are non-trivial.
  \end{enumerate}
\end{cor}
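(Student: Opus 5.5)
The plan is to deduce Corollary \ref{cor:differ_modular} directly from Theorem \ref{thm:MainTheorem} and Corollary \ref{cor:non-triviality}. For this we take $F=\Q$, $E=K_{f,\fr{p}}$, $V=V_{\fr{p}}$ and $\omega_p=\omega_f$. Since $F_v=\Q_p$, we have $\phi=\varphi$. There is a unique prime above $p$, and the Witt index $i_Q$ of the one-dimensional form $Q$ vanishes. So the only real work is checking \textbf{(Hyp 1)}--\textbf{(Hyp 4)} and identifying $\alpha_v,\beta_v$ with $\alpha,\beta$.

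\textbf{Hypotheses 1--3 and the eigenvalues.} \textbf{(Hyp 1)}--\textbf{(Hyp 3)} are quoted from Scholl (cf.\ \cite[Section 8.3]{Nek93}). The two-dimensional symplectic self-duality comes from Poincar\'e duality, as recalled above. By \eqref{eq:filtration}, $D^{+}_{\dR}(V_{\fr{p}})=\Fil^0 D_{\dR}(V_{\fr{p}})=K_{f,\fr{p}}\omega_f$ is one-dimensional. The Frobenius satisfies $\varphi^2-a_p(f)p^{-k/2}\varphi+p^{-1}=0$ on $D_{\cris}=D_{\st}$. Hence $c_{1}=a_p(f)p^{-k/2}$ and $c_{2}=-p^{-1}$, and $\alpha_v,\beta_v$ are exactly $\alpha,\beta$.

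\textbf{Hypothesis 4 (main obstacle).} It remains to show that $\{\varphi\omega_f,\omega_f\}$ is a basis, i.e.\ that $\omega_f$ is not a $\varphi$-eigenvector. This is where non-ordinarity enters. Suppose $\varphi\omega_f=\gamma\omega_f$. Then the line $D'=K_{f,\fr{p}}\omega_f$ is a $\varphi$-stable sub-object of the weakly admissible filtered $\varphi$-module $D_{\cris}(V_{\fr{p}})$, with induced filtration jump at $t_H(D')=k/2-1$. Weak admissibility gives $t_N(D')=v_p(\gamma)\le t_H(D')=k/2-1$.

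On the whole module, $t_H=t_N$: the Hodge side is $-k/2+(k/2-1)=-1$, and the Newton side is $v_p(p^{-1})=-1$. So the other eigenvalue $\gamma'$ has $v_p(\gamma')\ge -k/2$. The sub-object generated by the other eigenline meets $\Fil^0$ trivially and has $t_H=-k/2$. Weak admissibility then forces $v_p(\gamma')\le -k/2$, so $v_p(\gamma')=-k/2$ and $v_p(\gamma)=k/2-1$.

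Consequently $v_p(a_p(f))=v_p\bigl((\gamma+\gamma')p^{k/2}\bigr)=\min(k-1,0)=0$, using $k\ge2$ (if $k=2$ the valuations are $0$ and $-1$, and the minimum is still $0$). Thus $a_p(f)$ is a unit, contradicting non-ordinarity. I expect this admissibility and valuation bookkeeping to be the only delicate point; alternatively one can cite the standard fact that the Hodge filtration of a non-ordinary crystalline form is not $\varphi$-stable.

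\textbf{Conclusion.} With the hypotheses verified, part (1) is \eqref{eq:differ_height1} and \eqref{eq:differ_height2} with the sum over $v\mid p$ reduced to the single term $v=p$. For part (2), Corollary \ref{cor:non-triviality} applies with $i_Q=0$, because a non-zero localization map means $\dim\Im(\loc_p)\ge1>0$. The irreducible case is the last assertion of that corollary.
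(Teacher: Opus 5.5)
Your reduction matches the paper's: everything follows from Theorem \ref{thm:MainTheorem} and Corollary \ref{cor:non-triviality} once \textbf{(Hyp 4)} is checked. Your handling of (Hyp 1)--(Hyp 3), of $c_{1}=a_p(f)p^{-k/2}$, $c_{2}=-p^{-1}$, and of $i_Q=0$ is fine.

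The problem is the key step for \textbf{(Hyp 4)}: you have weak admissibility backwards. For a sub-object $D'$ of a weakly admissible filtered $\varphi$-module, one has $t_H(D')\le t_N(D')$; the reverse inequality holds for quotients. This is exactly how the paper uses it. So the $\varphi$-stable line $K_{f,\fr{p}}\omega_f$ gives $v_p(\gamma)\ge k/2-1$, not $\le$. Likewise the second eigenline gives $v_p(\gamma')\ge -k/2$, not $\le$.

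Your final values $v_p(\gamma)=k/2-1$ and $v_p(\gamma')=-k/2$ come out right only because you reversed both bounds at once, and $t_H(D)=t_N(D)=-1$ then pins them down. As written, the deduction is invalid. It also has a hole. If $\gamma=\gamma'$ and $\varphi$ is not semisimple, there is no ``other eigenline''. Your reversed bound on the Hodge line is then $-1/2\le k/2-1$, which gives no contradiction, so this case is not excluded. It matters: \textbf{(Hyp 4)} is needed even to define $N_\alpha$ and $N_\beta$ in part (1), where $\alpha\neq\beta$ is not assumed.

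With the correct direction, the single line $\Fil^0 D_{\dR}(V_{\fr{p}})$ suffices, and there are two ways to finish.
\begin{enumerate}
\item Argue as the paper does. Non-ordinarity and the Newton polygon of $X^2-a_p(f)p^{-k/2}X+p^{-1}$ give $\ord_p(\alpha),\ord_p(\beta)<k/2-1=t_H(\Fil^0)$. This contradicts $t_H\le t_N$ for the $\varphi$-stable line.
\item Keep your forward form. From $v_p(\gamma)\ge k/2-1\ge 0$ and $v_p(\gamma\gamma')=-1$ you get $v_p(\gamma')\le -k/2<v_p(\gamma)$, so in particular $\gamma\neq\gamma'$. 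Hence $v_p(a_p(f))=v_p\bigl((\gamma+\gamma')p^{k/2}\bigr)=v_p(\gamma')+k/2\le 0$, contradicting $v_p(a_p(f))>0$.
\end{enumerate}
Either version closes the gap, and no second eigenline is needed.
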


\begin{proof}
  It is sufficient to check that $V_{\fr{p}}$ satisfies the hypothesis (\textbf{Hyp 4}) in order to apply Theorem \ref{thm:MainTheorem} and Corollary \ref{cor:non-triviality}.
  To check it, we show that $\varphi \omega_{f} \notin D_{\dR}^{+}(V_{\fr{p}})$. 
  Assume that $\varphi \omega_{f} \in D_{\dR}^{+}(V_{\fr{p}})$.
  Then, $D_{\dR}^{+}(V_{\fr{p}})$ is a $\varphi$-stable submodule of $D_{\cris}(V_{\fr{p}}) = D_{\dR}(V_{\fr{p}})$.
  By \eqref{eq:filtration}, the Hodge number $t_{H}(D_{\dR}^{+}(V_{\fr{p}}))$ is $k/2 - 1$.
  Since the eigenvalue of $\varphi$ on $D_{\dR}^{+}(V_{\fr{p}})$ is $\alpha$ or $\beta$, the Newton number $t_{N}(D_{\dR}^{+}(V_{\fr{p}}))$ is $\ord_{p}(\alpha)$ or $\ord_{p}(\beta)$, where $\ord_p$ denotes the $p$-adic valuation normalized by $\ord_p(p)=1$.
  However, using the hypothesis that $a_{p}(f)$ is not a $p$-adic unit and considering the Newton polygon of $X^{2}-a_{p}(f)p^{-k/2}X +p^{-1}$, we see that $\ord_{p}(\alpha)$ and $\ord_{p}(\beta)$ are less than $k/2 - 1 = t_{H}(D_{\dR}^{+}(V_{\fr{p}}))$.
  This contradicts the fact that $D_{\cris}(V_{\fr{p}})$ is a weakly admissible filtered $\varphi$-module.
\end{proof}

\bibliographystyle{plain}
\bibliography{literature}

\end{document}